\newtheorem{theorem}{Theorem}[section]
\newtheorem{lemma}[theorem]{Lemma}
\def\cB{\mathcal{B}}
\def\cF{\mathcal{F}}
\def\cH{\mathcal{H}}
\def\cS{\mathcal{S}}
\def\bC{\mathbb{C}}
\def\bE{\mathbb{E}}
\def\bN{\mathbb{N}}
\def\bP{\mathbb{P}}
\def\bR{\mathbb{R}}
\def\e{\varepsilon}
\begin{document}

\title{Continuity in law for solutions of SPDEs \\
with space-time homogeneous Gaussian noise}

\author{Raluca M. Balan\footnote{University of Ottawa, Department of Mathematics and Statistics, 150 Louis Pasteur Private, Ottawa, Ontario, K1G 0P8, Canada. E-mail address: rbalan@uottawa.ca.} \footnote{Research supported by a grant from Natural Sciences and Engineering Research Council of Canada.}\and
Xiao Liang\footnote{University of Ottawa, Department of Mathematics and Statistics, 150 Louis Pasteur Private, Ottawa, Ontario, K1G 0P8, Canada. E-mail address: tlian081@uottawa.ca}
}

\date{May 15, 2023}
\maketitle

\begin{abstract}
\noindent In this article, we study the continuity in law of the solutions of two linear multiplicative SPDEs (the parabolic Anderson model and the hyperbolic Anderson model) with respect to the spatial parameter of the noise. The solution is interpreted in the Skorohod sense, using Malliavin calculus. We consider two cases: (i) the regular noise, whose spatial covariance is given by the Riesz kernel of order $\alpha \in (0,d)$, in spatial dimension $d\geq 1$; (ii) the rough noise, which is fractional in space with Hurst index $H<1/2$, in spatial dimension $d=1$. We assume that the noise is colored in time. The similar problem for the white noise in time was considered in \cite{bezdek16,GJQ}.
\end{abstract}

\noindent {\em MSC 2020:} Primary 60H15; Secondary 60G60, 60H07

\vspace{1mm}

\noindent {\em Keywords:} stochastic partial differential equations, random fields, space-time homogeneous noise

\section{Introduction}

Stochastic partial differential equations (SPDEs) are mathematical models for random phenomena which evolve in space and time. The study of SPDEs is a modern and active research field, with  difficult and interesting results being discovered every day. Key developments in this area focus on equations driven by space-time homogeneous Gaussian noise, using the random field approach initiated in Walsh' lecture notes \cite{walsh86}. Walsh' theory was developed for equations driven by a Gaussian
space-time white noise. In higher dimensions, the white noise is too rough, so that the solution can exist only in the
sense of distributions. To avoid this problem, Dalang 
introduced in \cite{dalang99} a new type of Gaussian noise that was smoother in space, but still
white in time. Dalang's seminal article sparked much interest in the scientific community, providing a natural extension of the classical It\^o theory, tailored for the study of SPDEs.
Together with Walsh' lecture notes, this article had created a new school of thought which was embraced by many researchers.

At the same time, in the late 1990's, a new process began to be used extensively in stochastic analysis as a more attractive model for the temporal structure of the noise than the Brownian motion. This process is the {\em fractional Brownian motion} (fBm), and is not a semi-martingale. One of the appealing feature of fBm is that it can be embedded into an isonormal Gaussian process for which one can use Malliavin calculus.
Placing the study of SPDEs in the context of Malliavin calculus can be traced back to \cite{bally-pardoux98}, and opened the door for deeper investigations about the probabilistic behaviour of the solutions, such as: H\"older continuity, Feynman-Kac representations, intermittency, existence and smoothness of density, exact asymptotic behavior of the moments, ergodicity, and Gaussian fluctuations for the spatial average. We refer the reader to \cite{BJQ,CDST,CD08,dalang-sanz09,FK09,hu-le19,HHLNT,HHLNT1,hu-nualart09,HNS11,HNV}
for a sample of relevant references.

In the present article, we examine another interesting property of solutions of SPDEs, namely the continuity in law with respect to the noise parameter. In the case equations driven by the white noise in time, the continuity in law of the solutions to the heat and wave equations driven by fBm of index $H \in (0,1)$ has been studied in the recent article \cite{GJQ}. This was a continuation of article \cite{GJQ1}, in which the authors considered the same problem for quasi-linear equations.  The general problem of weak continuity with respect to $H$ of the local time of fBm, or of various integrals with respect to fBm, was examined in \cite{JV1,JV2,JV3,JV4}. The continuity in law of the solution of the heat equation with white noise in time and spatial covariance given by the Riesz kernel was studied in \cite{bezdek16}. 

The purpose of the present article is to extend the results of \cite{bezdek16,GJQ} to the colored noise in time, and therefore contribute to the advancement of the rapidly growing knowledge about SPDEs, leading to a deeper understanding of the behaviour of the solutions to these equations.

\section{Framework and Main Results}

In this article, we consider the stochastic heat equation with linear multiplicative noise:
\begin{align}
\label{pam}
	\begin{cases}
		\dfrac{\partial u}{\partial t} (t,x)
		= \frac{1}{2}\Delta u(t,x) + u(t,x) \dot{W}(t,x), \quad
		t>0, \ x \in \bR^d, \\
		u(0,x) = 1,
	\end{cases}
\end{align}
and the stochastic wave equation with linear multiplicative noise:
\begin{align}
\label{ham}
	\begin{cases}
		\dfrac{\partial^2 u}{\partial t^2} (t,x)
		= \Delta u(t,x) + u(t,x) \dot{W}(t,x), \quad
		t>0, \ x \in \bR^d, \\
		u(0,x) = 1, \ \dfrac{\partial u}{\partial t} (0,x) = 0.
	\end{cases}
\end{align}
Equations \eqref{pam} and \eqref{ham} are called in the literature the {\em parabolic Anderson model}, respectively the {\em hyperbolic Anderson model}.

We assume that $W$ is a space-time homogeneous Gaussian noise. More precisely, $W=\{W(\varphi);\varphi \in C_0^{\infty}(\bR_{+} \times \bR^d)\}$ is a zero-mean Gaussian process, defined on a complete probability space $(\Omega,\cF,\bP)$, with covariance:
\begin{equation}
\label{cov}
\bE[W(\varphi)W(\psi)]=\int_{\bR_{+}^2 \times \bR^d} \cF \varphi(t,\cdot)(\xi) \overline{\cF \psi(s,\cdot)(\xi)}\gamma_0(t-s)dtds \mu(d\xi)=:\langle \varphi,\psi \rangle_{\cH},
\end{equation}
where $\cF \varphi(t,\cdot)(\xi)=\int_{\bR^d}e^{-i \xi \cdot x}\varphi(t,x)dx$ is the Fourier transform of the function $\varphi(t,\cdot)$, $\xi \cdot x$ is the Euclidean product in $\bR^d$, and $\gamma_0$ is a non-negative and non-negative definite function on $\bR_{+}$.

The spatial covariance structure of the noise is specified by the spatial spectral measure $\mu$. In general, this is assumed to be tempered, non-negative and non-negative definite. The scope of the present article is to analyze the influence of $\mu$ on the solutions of equations \eqref{pam} and \eqref{ham}, using continuity in law. To this end, we will consider two cases:
\[
\mu(d\xi)=
\left\{
\begin{array}{ll}
|\xi|^{-\alpha}d\xi  & \mbox{with $\alpha \in (0,d)$ and $d\geq 1$ ({\em Case I: the regular case})} \\
c_H|\xi|^{1-2H}d\xi & \mbox{with $H \in (0,1/2)$ and $d=1$ ({\em Case II: the rough case})}
\end{array} \right.
\]

In Case II, we assume that the constant $c_H$ is given by:
$$c_H=\frac{\Gamma(2H+1)\sin(\pi H)}{2\pi}.$$
With this choice, $W$ behaves in space like a {\em fractional Brownian motion} (fBm) of index $H$, in a sense which will be specified below. Recall that the fBm of index $H\in (0,1)$ is a zero-mean Gaussian process $B=(B_x)_{x \in \bR}$ with covariance:
\[
R_H(x,y):=\bE[B_x B_y]=\frac{1}{2}(|x|^{2H}+|y|^{2H}-|x-y|^{2H})=c_H \int_{\bR}\cF 1_{[0,x]}(\xi) \overline{\cF 1_{[0,y]}(\xi)}|\xi|^{1-2H}d\xi.
\]
If $H>1/2$, we have the following representation: for any $x>0$, $y>0$,
\[
R_H(x,y)=\alpha_H \int_0^x \int_0^y |u-v|^{2H-2}dudv,
\]
where $\alpha_H=H(2H-1)$. If $H=1/2$, $B$ is a Brownian motion.
The fBm has a modification whose sample paths are H\"older continuous of order less than $H$. These paths are rougher (i.e. less regular), or smoother (i.e. more regular) than the Brownian paths, depending on whether $H<1/2$ or $H>1/2$. Case II with parametrization $\alpha=2H-1$ and $d=1$ corresponds, modulo a constant, to the case when the noise $W$ behaves in space like a fBm of index $H >1/2$. This explains our terminology for the two cases.

\medskip

We mention now few more details about the noise. We denote by $\cH$ the Hilbert space defined as the completion of $C_0^{\infty}(\bR_{+} \times \bR^d)$ with respect to the inner product $\langle \cdot,\cdot \rangle_{\cH}$. Then $W$ can be extended to an isonormal Gaussian process $\{W(\varphi);\varphi \in \cH\}$, as defined in Malliavin calculus. We refer the reader to \cite{nualart06} for more details about Malliavin calculus.

The space $\cH$ may contain distributions, but it also contains some nice functions. Under certain conditions, it can be proved that $1_{[0,t] \times [0,x]} \in \cH$, in which case we can define the random field $\{W(t,x):=W(1_{[0,t] \times [0,x]}); t>0,x \in \bR^d\}$. In Case II, for any $t>0$ fixed, $\{W(t,x)\}_{x\in \bR}$ is, modulo a constant, a fBm of index $H$, since
\[
\bE[W(t,x) W(t,y)]=C_t R_H(x,y), \quad  \mbox{with $C_t=\int_0^t \int_0^t \gamma_0(u-v)dudv$}.
\]
The same thing is true in Case I with parametrization $\alpha=1-2H$ and $d=1$.

In Case I, the noise and the solution depend on the parameter $\alpha$, and will be denoted by $W^{\alpha}, u^{\alpha}$, respectively. The goal of this article is to prove the continuity in law of $u^{\alpha}$ with respect to $\alpha$, in the space of continuous functions on $\bR_{+} \times \bR^d$.
Similarly, in Case II, the noise and the solution are denoted by $W^{H},u^{H}$, respectively, and we are interested in the continuity in law of $u^{H}$ with respect to $H$. In both cases, we will provide constructions which will guarantee that all noise processes are defined on the same probability space, for all parameter values $\alpha$ (or $H$).

By the Bochner-Schwartz theorem, there exists a tempered measure $\mu_0$ on $\bR$ such that $\gamma_0$ is the Fourier transform of $\mu_0$ in the space $\cS_{\bC}'(\bR)$ of tempered distributions on $\bR$. Then,
\begin{equation}
\label{Fourier-phi}
\int_{\bR^2}\phi(t)\phi(s)\gamma_0(t-s)dtds =\int_{\bR}|\cF \phi(\tau)|^2 \mu_0(d\tau),
\end{equation}
for any $\phi \in \cS_{\bC}(\bR)$, where $\cS_{\bC}(\bR)$ is the space of $\bC$-valued rapidly decreasing $C^{\infty}$-functions on $\bR$, and $\cF \phi$ is the Fourier transform of $\phi$, given by $\cF \phi(\tau)=\int_{\bR}e^{-i \tau t}\phi(t)dt$ for all $\tau \in \bR$.

To construct all noise processes $(W^{\alpha})_{\alpha \in (0,d)}$, respectively $(W^{H})_{H \in (0,1/2)}$, on the same probability space, we will assume that
there exists a function $g_0:\bR \to [0,\infty]$ such that
\begin{equation}
\label{cond-mu0}
\mu_0(d\tau)=g_0(\tau) d\tau.
\end{equation}
A basic example is:
\begin{equation}
\label{def-gamma0}
\gamma_0(t)=\alpha_{H_0}|t|^{2H_0-2} \quad \mbox{for some $H_0 \in (\frac{1}{2},1)$},
\end{equation}
in which case, $g_0(\tau)=c_{H_0}|\tau|^{1-2H_0}$, with the same constants $\alpha_{H_0}$ and $c_{H_0}$ as above. In this case, $W$ behaves in time like a fBm of index $H_0$, in the sense that for any $x\in \bR^d$ fixed,
$\{W(t,x)\}_{t\geq 0}$ is, modulo a constant, a fBm of index $H_0$, since
\[
\bE[W(t,x)W(s,x)]=C_{x}'R_{H_0}(t,s) \quad \mbox{with $C_x'=\int_{\bR}|\cF 1_{[0,x]}(\xi)|^2 \mu(d\xi)$}.
\]

In what follows, we will need an extension of relation \eqref{Fourier-phi} to higher dimensions, namely:
\begin{equation}
\label{phi-high-dim}
\int_{\bR^n}\int_{\bR^n}\phi(\pmb{t_n})\phi(\pmb{s_n})
\prod_{j=1}^n \gamma_0(t_j-s_j)d\pmb{t_n}d\pmb{s_n} =\int_{\bR^n}|\cF \phi(\pmb{\tau_n})|^2 \prod_{j=1}^n g_0(\tau_j)d\pmb{\tau_n},
\end{equation}
for any $\phi \in \cS_{\bC}(\bR^n)$, where $\pmb{t}_n=(t_1,\ldots,t_n)$, $\pmb{s}_n=(s_1,\ldots,s_n)$, $\pmb{\tau_n}=(\tau_1,\ldots,\tau_n)$, and $\cF \phi$ is the Fourier transform of $\phi$, given by $\cF \phi(\pmb{\tau_n})=\int_{\bR}e^{-i \sum_{j=1}^n \tau_j t_j}\phi(\pmb{t_n})d\pmb{t_n}$ for all $\pmb{\tau_n} \in \bR^n$.

\medskip

We denote by $G_t$ the fundamental solution of the heat equation, respectively the wave equation, on $\bR_{+} \times \bR^d$. More precisely, in the case of the heat equation,
\[
G_t(x)=\frac{1}{(2\pi t)^{d/2}}\exp\left(-\frac{|x|^2}{2t} \right)
\]
In the case of the wave equation, in dimension $d=1$ or $d=2$, $G_t$ is an integrable function:
\begin{align*}
G_t(x)&= \frac{1}{2}1_{\{|x| <t\}} \quad \mbox{if} \ d=1\\
G_t(x)&= \frac{1}{2\pi}\frac{1}{\sqrt{t^2-|x|^2}}1_{\{|x|<t\}} \quad \mbox{if} \ d=2;
\end{align*}
in dimension $d=3$, $G_t$ is a measure given by $G_t=(4\pi t)^{-1}\sigma_t$ where $\sigma_t$ is the surface measure on the sphere $\{x \in \bR^3; |x|=t\}$; and in dimension $d\geq 4$, $G_t$ is a distribution.

The Fourier transform of $G_t$ plays an important role in this framework. We recall that in any dimension $d$, this is given by:
\[
\cF G_t(\xi)=
e^{-t|\xi|^2/2} \quad \mbox{in the case of the heat equation},
\]
respectively
\[
\cF G_t(\xi)= \frac{\sin(t|\xi|)}{|\xi|}
\quad \mbox{in the case of the wave equation}
\]

We will analyze simultaneously both equations, but we will provide a special analysis for the case of each equation, when needed.

\medskip

We say that $u$ is a {\bf Skorohod solution} of equation \eqref{pam}, respectively \eqref{ham}, if $u$ is adapted with respect to the filtration $\cF_t=\sigma\{W(1_{[0,t]}\varphi);\varphi \in C_0^{\infty}(\bR^d)\},t\geq 0$ associated with $W$, and satisfies the integral equation:
\[
u(t,x)=1+\int_0^t \int_{\bR^d}G_{t-s}(x-y)u(s,y) W(\delta s, \delta y),
\]
where the stochastic integral is interpreted in the Skorohod sense, i.e. it is given by the divergence operator $\delta$ with respect to $W$. We recall that $\delta$ is the adjoint of the Malliavin derivative $D$, and we refer the reader to \cite{nualart06} for the definitions and basic properties of these operators.

We begin now to review some basic facts about the solution. Using the methodology introduced in \cite{hu-nualart09}, we know that if it exists, the solutions of equations \eqref{pam} and \eqref{ham} have the series expansion:
\begin{equation}
\label{series}
u(t,x)=1+\sum_{n\geq 1}I_n\big(f_{t,x,n}\big) \quad \mbox{in} \quad L^2(\Omega),
\end{equation}
the terms of these series being orthogonal in $L^2(\Omega)$.
Here $I_n$ is the multiple integral of order $n$ with respect to $W$, and $f_{t,x,n}$ is given by:
\[
f_{t,x,n}(t_1,x_1,\ldots,t_n,x_n)=G_{t-t_n}(x-x_n)\ldots G_{t_2-t_1}(x_2-x_1)1_{\{0<t_1<\ldots<t_n<t\}}.
\]
The multiple integral $I_n(f)$ is a zero-mean random variable with finite variance, which is well-defined for all $f$ in the $n$-th tensor product space $\cH^{\otimes n}$. Its variance is given by $\bE|I_n(f)|^2=n!\|\widetilde{f}\|_{\cH^{\otimes n}}^{2}$, where
$\widetilde{f}$ is the symmetrization of the function $f$, defined by:
\[
\widetilde{f}(t_1,x_1,\ldots,t_n,x_n)=\frac{1}{n!}\sum_{\rho \in S_n}f(t_{\rho(1)},x_{\rho(1)},\ldots,t_{\rho(n)},x_{\rho(n)}),
\]
and $S_n$ is the set of permutations of $\{1,\ldots,n\}$.

The solution exists if and only if the series \eqref{series} converges in $L^2(\Omega)$, i.e.
\[
\sum_{n\geq 1}\bE|I_n\big(f_{t,x,n}\big) |^2 =\sum_{n\geq 1}n! \, \|\widetilde{f}_{t,x,n}\|_{\cH^{\otimes n}}^2<\infty.
\]

In Case I, for both heat and wave equations, a sufficient condition for the existence of the solution is {\em Dalang's condition}:
\[
\int_{\bR^d} \frac{1}{1+|\xi|^2}\mu(d\xi)<\infty,
\]
which is equivalent to
\begin{equation}
\label{dalang-cond}
d-\alpha<2.
\end{equation}
If in addition, $\gamma_0$ is given by the fractional kernel \eqref{def-gamma0}, then
\eqref{dalang-cond} is the necessary and sufficient condition for the existence of the Skorohod solution of the heat equation \eqref{pam} (see \cite{BC14}), while in the case of the wave equation \eqref{ham}, the necessary and sufficient condition for the existence of the Skorohod solution is: (see \cite{CDST})
\[
d-\alpha<2H_0+1.
\]

In the case of a rough noise in space (Case II) with temporal covariance $\gamma_0$ given by \eqref{def-gamma0}, a sufficient condition for the existence of the solution is $H_0+H>3/4$ for the heat equation (see \cite{hu-le19}), respectively $H>1/4$ for the wave equation (see \cite{song-song-xu20}).
These results provide extensions of the some earlier results of \cite{BJQ,HHLNT,HHLNT1} which were obtained for the white noise in time (corresponding to the case $H_0=1/2$).

The following two theorems are the main results of the present article.

\begin{theorem}[The Regular Case]
\label{main-th1}
Let $\{W^{\alpha}\}_{\alpha \in (0,d)}$ be a family of zero-mean Gaussian processes with covariance \eqref{cov} in which $\gamma_0$ is chosen such that its corresponding measure $\mu_0$ satisfies \eqref{cond-mu0}, and the measure $\mu$ is given by:
\begin{equation}
\label{def-mu1}
\mu(d\xi)=|\xi|^{-\alpha}d\xi \quad \mbox{with $\alpha \in (0,d)$ and $d\geq 1$}.
\end{equation}
For any $\alpha \in \big(\max(d-2, 0),d \big)$, let $u^{\alpha}$ be a continuous modification of the Skorohod solution of equation \eqref{pam}, respectively equation \eqref{ham}, with noise $W$ replaced by $W^{\alpha}$.
If $\alpha_n \to \alpha^* \in (\max(d-2,0),d)$, then
\[
u^{\alpha_n} \stackrel{d}{\longrightarrow} u^{\alpha^*} \quad \
\mbox{in $C([0,T] \times \bR^d)$},
\]
where $C([0,T] \times \bR^d)$ is equipped with the topology of uniform convergence on compact sets.
\end{theorem}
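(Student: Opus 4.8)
The plan is to deduce convergence in law in $C([0,T]\times\bR^d)$ from the two classical ingredients: convergence of the finite-dimensional distributions and tightness. The crucial structural observation is that in the chaos expansion
\[
u^{\alpha}(t,x)=1+\sum_{n\geq 1}I_n^{\alpha}\big(f_{t,x,n}\big),
\]
the kernels $f_{t,x,n}$ are built only from the fundamental solution $G$ and do not depend on $\alpha$; the parameter enters exclusively through the inner product $\langle\cdot,\cdot\rangle_{\cH}$, i.e. through the spatial measure $\mu(d\xi)=|\xi|^{-\alpha}d\xi$. I would first realize all the noises $\{W^{\alpha}\}$ on a single probability space via their spectral representation, writing each $W^{\alpha}$ as a linear image of one master complex Gaussian measure $M$ with control measure $g_0(\tau)\,d\tau\,d\xi$, weighted by the spectral density $|\xi|^{-\alpha/2}$. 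Under this coupling every multiple integral $I_n^{\alpha}(f)$ sits inside the $n$-th chaos of $M$, so integrals of different orders remain orthogonal across all values of $\alpha$.

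For the finite-dimensional distributions it then suffices to prove the stronger statement that $u^{\alpha_n}(t,x)\to u^{\alpha^*}(t,x)$ in $L^2(\Omega)$, since joint $L^2$-convergence at finitely many points yields convergence in probability, hence in law, of the vectors. By the chaos orthogonality noted above,
\[
\bE\big|u^{\alpha_n}(t,x)-u^{\alpha^*}(t,x)\big|^2=\sum_{n\geq 1}\bE\big|I_n^{\alpha_n}(f_{t,x,n})-I_n^{\alpha^*}(f_{t,x,n})\big|^2.
\]
I would control the right-hand side in two parts. For each fixed $n$, the spectral representation reduces the term to an integral of $|\cF\widetilde f_{t,x,n}|^2$ against $\prod_j g_0(\tau_j)$ and the product of densities $|\xi_j|^{-\alpha_n}$; since $|\xi|^{-\alpha_n}\to|\xi|^{-\alpha^*}$ pointwise and is dominated, uniformly for $\alpha_n$ in a compact neighborhood of $\alpha^*$, by $|\xi|^{-b}1_{\{|\xi|\leq 1\}}+|\xi|^{-a}1_{\{|\xi|>1\}}$ for suitable $a<\alpha^*<b$, dominated convergence gives $\bE|I_n^{\alpha_n}(f_{t,x,n})-I_n^{\alpha^*}(f_{t,x,n})|^2\to 0$. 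The tail $\sum_{n>N}\bE|I_n^{\alpha_n}(f_{t,x,n})|^2=\sum_{n>N}n!\,\|\widetilde f_{t,x,n}\|_{\cH_{\alpha_n}^{\otimes n}}^2$ is made uniformly small by establishing summability of the chaos norms uniformly in $\alpha$ over that neighborhood; this is where the interior condition $\alpha^*\in(\max(d-2,0),d)$ is essential, as it preserves a uniform margin in Dalang's condition $d-\alpha<2$.

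For tightness in $C([0,T]\times\bR^d)$ with the topology of uniform convergence on compact sets, it suffices to treat each compact $K\subset\bR^d$ separately and apply a Kolmogorov-type criterion, proving uniform-in-$n$ increment bounds of the form $\bE|u^{\alpha_n}(t,x)-u^{\alpha_n}(t',x')|^p\leq C(|t-t'|+|x-x'|)^{\beta}$ with $p$ large and $\beta$ exceeding the space-time dimension, together with a uniform bound on $\bE|u^{\alpha_n}(t,x)|^p$ over $[0,T]\times K$. Hypercontractivity on Wiener chaos reduces these $L^p$ estimates to the $L^2$ quantities $\sqrt{n!}\,\|\widetilde f_{t,x,n}-\widetilde f_{t',x',n}\|_{\cH_{\alpha_n}^{\otimes n}}$, so the whole point becomes bounding the increment kernels in $\cH_{\alpha_n}^{\otimes n}$ uniformly in $\alpha_n$ and summing the resulting series. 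Combining f.d.d. convergence with tightness yields the claimed convergence in law.

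The main obstacle throughout is obtaining all of these spectral estimates uniformly in the parameter. Both the uniform tail control of the chaos series and the uniform increment bounds for tightness hinge on dominating the $\alpha$-dependent densities $|\xi|^{-\alpha}$ by a single majorant that remains integrable against the remaining factors on a whole compact neighborhood of $\alpha^*$, and on checking that the constants produced by the Dalang-type spectral integrals depend on $\alpha$ only through this uniform margin. The interior restriction on $\alpha^*$, and hence on the $\alpha_n$, is exactly what guarantees such a margin; the remaining work is the careful but essentially routine verification of the dominated-convergence and summability steps, with a separate treatment of the heat and wave kernels $\cF G_t$, whose different decay in $\xi$ affects the bounds.
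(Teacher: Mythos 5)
Your proposal is correct and follows essentially the same route as the paper: the same coupling of all noises through a single complex Gaussian measure, finite-dimensional convergence via $L^2$-convergence at fixed $(t,x)$ (dominated convergence with the majorant $|\xi|^{-b}1_{\{|\xi|\le 1\}}+|\xi|^{-a}1_{\{|\xi|>1\}}$ for each fixed chaos order, plus a uniform-in-$\alpha$ tail bound on the chaos series exploiting the margin in Dalang's condition), and tightness via hypercontractivity and uniform increment estimates fed into a Kolmogorov-type criterion. Your use of exact chaos orthogonality to write the $L^2$ distance as a single sum is only a cosmetic repackaging of the paper's partial-sum approximation diagram.
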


\begin{theorem}[The Rough Case]
\label{main-th2}
Let $\{W^{H} \}_{H \in (0,1/2)}$ be a family of zero-mean Gaussian processes with covariance \eqref{cov} in which $\gamma_0$ is given by \eqref{def-gamma0} and the measure $\mu$ is given by:
\begin{equation}
\label{def-mu2}
\mu(d\xi)=|\xi|^{1-2H}d\xi \quad \mbox{with $H \in (0,1/2)$ and $d=1$.}
\end{equation}
Fix $H_0 \in (1/2,1)$. Define
\begin{equation}
\label{def-ell}
\ell=
\left\{
\begin{array}{ll}
\max(3/4-H_0,0) & \mbox{in the case of the heat equation} \\
1/4 & \mbox{in the case of the wave equation}
\end{array} \right.
\end{equation}
For any $H \in (\ell,1/2)$, let $u^{H}$ be a continuous modification of the Skorohod solution of equation \eqref{pam}, respectively \eqref{ham}, with noise $W$ replaced by $W^{H}$.
If $H_n \to H^* \in (\ell,1/2)$, then
\[
u^{H_n} \stackrel{d}{\longrightarrow} u^{H^*} \quad \
\mbox{in $C([0,T] \times \bR)$},
\]
where $C([0,T] \times \bR)$ is equipped with the topology of uniform convergence on compact sets.
\end{theorem}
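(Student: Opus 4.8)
The plan is to follow the two-step route that is standard for convergence in law in a path space: first establish convergence of the finite-dimensional distributions of $u^{H_n}$, and then prove tightness of the laws $\{\bP \circ (u^{H_n})^{-1}\}_n$ in $C([0,T]\times\bR)$ with the topology of uniform convergence on compacts; these two facts together give $u^{H_n}\stackrel{d}{\to}u^{H^*}$. The preliminary move is to realize all the noises $W^H$ on a single probability space. Since $\gamma_0$ is fixed (independent of $H$) and, by \eqref{cond-mu0}, $\mu_0(d\tau)=g_0(\tau)d\tau$, while the spatial spectral measure is $\mu(d\xi)=c_H|\xi|^{1-2H}d\xi$, I would fix one Hermitian Gaussian random measure $Z$ on $\bR_\tau\times\bR_\xi$ with the $H$-free control measure $g_0(\tau)\,d\tau\,d\xi$ and set, for real test functions $\varphi$,
\[
W^H(\varphi)=\int_{\bR\times\bR}\cF\varphi(\tau,\xi)\,\sqrt{c_H}\,|\xi|^{(1-2H)/2}\,Z(d\tau,d\xi),
\]
where now $\cF$ is the full space-time Fourier transform. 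A direct computation using \eqref{Fourier-phi} (applied in the time variable for each fixed $\xi$) shows that $W^H$ has exactly the covariance \eqref{cov} with $\mu(d\xi)=c_H|\xi|^{1-2H}d\xi$, and all the $W^H$ live on the common space carrying $Z$. Transporting the chaos expansion \eqref{series} through this representation, the $n$-th chaos term of $u^H(t,x)$ becomes a multiple integral $I_n^Z(h^H_{t,x,n})$ with respect to $Z$, with kernel
\[
h^H_{t,x,n}(\pmb{\tau}_n,\pmb{\xi}_n)=\cF\widetilde f_{t,x,n}(\pmb{\tau}_n,\pmb{\xi}_n)\prod_{j=1}^n \sqrt{g_0(\tau_j)}\,\sqrt{c_H}\,|\xi_j|^{(1-2H)/2}.
\]
This reduces finite-dimensional convergence to showing that, for finitely many points $(t_k,x_k)$, the vector $(u^{H_n}(t_k,x_k))_k$ converges in $L^2(\Omega)$ to $(u^{H^*}(t_k,x_k))_k$, since $L^2$-convergence on the common space yields convergence in probability, hence convergence of the finite-dimensional laws.

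For this $L^2$-convergence I would argue chaos by chaos and control the tail uniformly. Because the $H$-dependence of $h^H_{t,x,n}$ sits entirely in the factor $\prod_j\sqrt{c_H}|\xi_j|^{(1-2H)/2}$, while $\cF\widetilde f_{t,x,n}$ and the time weights are fixed, for each fixed $n$ one has
\[
\big\|I_n^Z(h^{H_n}_{t,x,n})-I_n^Z(h^{H^*}_{t,x,n})\big\|_{L^2(\Omega)}^2=n!\,\big\|\,h^{H_n}_{t,x,n}-h^{H^*}_{t,x,n}\,\big\|_{L^2}^2,
\]
whose integrand tends to $0$ pointwise since $c_{H_n}\to c_{H^*}$ and $|\xi|^{(1-2H_n)/2}\to|\xi|^{(1-2H^*)/2}$. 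Dominated convergence then forces this difference to $0$, provided a dominating function is available. The dominating function, together with the summability of the tail $\sum_{n\ge N}n!\,\|\widetilde f_{t,x,n}\|^2_{\cH^{\otimes n}}$ uniformly over $H$ in a neighbourhood of $H^*$, is exactly what must be supplied by $H$-uniform versions of the variance estimates underlying the existence results (\cite{hu-le19} for the heat equation, \cite{song-song-xu20} for the wave equation). Since $H_n\to H^*\in(\ell,1/2)$, all but finitely many $H_n$ lie in a compact subinterval $[\ell+\delta,1/2]$, on which I would prove these bounds with constants independent of $H$; combining the term-by-term limit with the uniform tail bound yields the desired $L^2$-convergence.

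For tightness I would invoke Kolmogorov's criterion on each compact $[0,T]\times[-M,M]$, which demands uniform-in-$n$ moment bounds
\[
\sup_n \bE|u^{H_n}(t,x)|^p\le C_{T,p},\qquad
\sup_n \bE|u^{H_n}(t,x)-u^{H_n}(s,y)|^p\le C_{T,p}\big(|t-s|+|x-y|\big)^{p\beta}
\]
for some $\beta>0$ and all $p$ with $p\beta>2$. By hypercontractivity on the Wiener chaos, the $p$-th moments are controlled by the second-moment (chaos-norm) estimates, so it suffices to bound, uniformly in $H\in[\ell+\delta,1/2]$, the increment norms $\sum_n n!\,\|\widetilde f_{t,x,n}-\widetilde f_{s,y,n}\|^2_{\cH^{\otimes n}}$ by a constant times $(|t-s|+|x-y|)^{2\beta}$. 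In the rough setting these Hölder-type increment estimates reduce, in the spatial variable, to the behaviour of $\|1_{[0,x]}-1_{[0,y]}\|^2_{\cH}$ and its $n$-fold analogues, which scale like $|x-y|^{2H}$, and to analogous temporal increments governed by $\gamma_0$.

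The hard part will be the uniformity of all these estimates over the Hurst parameter. The rough-noise ($H<1/2$) chaos-norm bounds are already delicate because $|\xi|^{1-2H}$ grows at infinity, and the thresholds $\ell$ in \eqref{def-ell} mark precisely where the relevant constants blow up; obtaining the variance bounds, the dominating function for the term-by-term limit, and the Kolmogorov increment bounds all with constants uniform over a compact subinterval of $(\ell,1/2)$ is the technical core of the argument. I expect this to require a careful, $H$-uniform reworking of the spectral estimates of \cite{hu-le19,song-song-xu20}, kept away from the endpoint $\ell$ by exploiting that both the limit $H^*$ and the tail of the sequence are confined to $[\ell+\delta,1/2]$.
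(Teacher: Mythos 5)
Your outline coincides with the paper's strategy at every structural point: the same one-probability-space spectral construction (your $Z$ with control measure $g_0(\tau)\,d\tau\,d\xi$ is the paper's $\widehat{W}$ with the weight $\sqrt{c_{H_0}c_H}\,|\tau|^{1/2-H_0}|\xi|^{1/2-H}$ moved into the integrand), the same reduction of finite-dimensional convergence to $L^2(\Omega)$-convergence at fixed $(t,x)$ via chaos-by-chaos limits plus a tail bound uniform over a compact parameter interval, and the same hypercontractivity-plus-Kolmogorov tightness scheme. The problem is that everything which makes the theorem true at the stated threshold $\ell$ is deferred: the dominating function, the uniform tail summability, and the uniform increment estimates are exactly the content of Lemma~\ref{rough-conv-Ik}, the bound \eqref{rough-unif-conv}, and Theorem~\ref{rough-unif-mom}, i.e.\ essentially all of Section~\ref{section-rough}. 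Announcing that these will follow from ``an $H$-uniform reworking of \cite{hu-le19,song-song-xu20}'' localizes the difficulty correctly but does not discharge it, so as written the proposal is a plan rather than a proof.

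The concrete missing idea is the mechanism that reaches $\ell=\max(3/4-H_0,0)$ for the heat equation, namely the Littlewood--Hardy inequality \eqref{LH-ineq}. In the rough case one cannot use the shift-invariant bound of Lemma~\ref{G-lemma}; instead one needs the change of variables $\eta_j=\xi_1+\cdots+\xi_j$ and the product inequality \eqref{prod-ineq} (neither appears in your proposal), after which the worst spatial factor integrates, via Lemma~\ref{rough-int}, to $\prod_j(t_{j+1}-t_j)^{-(3-4a)/2}$. If the temporal covariance is then handled by Cauchy--Schwarz --- which is what a naive domination argument amounts to --- this must be integrated over the simplex, and by Lemma~\ref{beta-lem} that integral diverges whenever $a<1/4$. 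So your argument, as stated, can only cover $H>1/4$: adequate for the wave equation (where the paper indeed uses Cauchy--Schwarz and the white-noise-in-time computation of \cite{GJQ}), but not for the heat equation, where the theorem allows any $H>3/4-H_0$, possibly far below $1/4$. The paper's fix is to apply \eqref{LH-ineq} in the time variables, which raises the diagonal term to the power $1/(2H_0)$ and turns the exponent into $(3-4a)/(4H_0)<1$ precisely when $a>3/4-H_0$; this is where $\ell$ comes from, and nothing in your proposal produces it. Two smaller points: the spatial increments are not controlled by $\|1_{[0,x]}-1_{[0,y]}\|_{\cH}^2$ but by inserting $|1-e^{-i(\xi_1+\cdots+\xi_n)z}|^2\le |(\xi_1+\cdots+\xi_n)z|^{2\delta}$ inside the $n$-fold spectral integrals; and the compact interval must be $[a,b]\subset(\ell,1/2)$ with $b<1/2$ strictly, since your choice $[\ell+\delta,1/2]$ includes $H=1/2$, which is outside the family and at which constants such as those in \eqref{def-CH1-w} blow up.
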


The proofs of Theorems \ref{main-th1} and \ref{main-th2} are presented in Section \ref{section-regular}, respectively Section \ref{section-rough}, and follow the classical method of finite-dimensional convergence, plus tightness. Instead of the finite-dimensional convergence, we show the (stronger) $L^2(\Omega)$-convergence for every $(t,x)\in \bR_{+} \times \bR^d$ fixed. For this, we first prove the corresponding
$L^2(\Omega)$-convergence of the $m$-th approximation $u_m^{\alpha_n}(t,x)$ (respectively $u_m^{H_n}(t,x)$) of the solution $u^{\alpha^*}(t,x)$ (respectively $u^{H^*}(t,x)$) for fixed $m\geq 1$, and then we let $n\to \infty$, by showing the uniform convergence with respect to the noise parameter ($\alpha$ or $H$). For tightness, we apply Kolmogorov-Centsov theorem, which means that we need to estimate the increments of the solution. This study requires a special analysis of the constants involved, which guarantees that all bounds are uniform for parameter values in a compact set.

\medskip

We conclude this introduction with few words about the notation.
We denote
\[
f(\pmb{t_n},\pmb{x_n})=f(t_1,x_1,\ldots,t_n,x_n),
\]
where $\pmb{t_n}=(t_1,\ldots,t_n)\in \bR_{+}^n$ and $\pmb{x_n}=(x_1,\ldots,x_n)\in (\bR^d)^n$. Occasionally, we will use $\pmb{\xi_n}=(\xi_1,\ldots,\xi_n) \in (\bR^d)^n$. We will use the convention:
\begin{equation}
\label{convention}
G_t(x)=0 \quad \mbox{for any $t<0$ and $x \in \bR^d$.}
\end{equation}

We let $T_n(t)=\{\pmb{t_n}=(t_1,\ldots,t_n); 0<t_1<\ldots<t_n<t\}$ be the $n$-dimensional simplex.

For any $p\geq 1$, we denote by $\|\cdot \|_p$ the norm in $L^p(\Omega)$.
We let $\sigma$ be the surface measure on the unit sphere $S_1(0)=\{z\in \bR^d;|z|=1\}$, and $c_d$ be the area of $S_1(0)$, i.e.
\begin{equation}
\label{def-cd}
c_d=\int_{S_1(0)}\sigma(dz).
\end{equation}

Whenever we need separate calculations for the heat and wave equations involving the fundamental solution $G$, we will use the notation $G^h$ for the heat equation and
$G^w$ for the wave equation.

\section{Regular Noise}
\label{section-regular}

In this section, we consider equations \eqref{pam} and \eqref{ham} driven by a Gaussian noise $W$ with covariance \eqref{cov} in which $\gamma_0$ is chosen such that its corresponding measure $\mu_0$ satisfies \eqref{cond-mu0}, and the measure $\mu$ is given by \eqref{def-mu1}.

To emphasize the dependence on the parameter $\alpha$, we denote the noise, the Hilbert space, and the solution, by $W^{\alpha},\cH^{\alpha},u^{\alpha}$, respectively. The multiple integral of order $n$ with respect to $W^{\alpha}$ will be denoted by $I_n^{\alpha}$.
With this notation, the series expansion \eqref{series} becomes:
\[
u^{\alpha}(t,x)=1+\sum_{n\geq 1}I_{n}^{\alpha}(f_{t,x,n}).
\]

\bigskip

As in \cite{GJQ}, we begin with the construction of the family $\{W^{\alpha}\}_{\alpha \in (0,d)}$ on the same probability space. For this, we consider a $\bC$-valued Gaussian random measure $\widehat{W}=\{\widehat{W}(A); A \in \cB_{b}(\bR^{d+1})\}$ defined on a complete probability space $(\Omega,\cF,\bP)$, given by:
\begin{equation}
\label{def-W-hat}
\widehat{W}(A)=W_1(A)+iW_2(A)
\end{equation}
where $W_1$ and $W_2$ are independent space-time Gaussian white noise processes on $\bR^{d+1}$. Then for any sets $A,B \in \cB_{b}(\bR^{d+1})$,
$\bE[\widehat{W}(A) \overline{\widehat{W}(B)}]={\rm Leb}(A \cap B)$, where ${\rm Leb}$ denotes the Lebesgue measure.

We let $\widehat{W}(1_{A})=\widehat{W}(A)$. By linearity, we extend this definition to simple functions, and by approximation, we extend it further to functions $h \in L_{\bC}^2(\bR^{d+1})$. We denote this extension by
\[
\widehat{W}(h)=\int_{\bR \times \bR^{d}} h(\tau,\xi) \widehat{W}(d\tau,d\xi).
\]

The following isometry property holds: for any function $h \in L_{\bC}^2(\bR^{d+1})$,
\begin{equation}
\label{iso-wide-W}
\bE\left|\int_{\bR \times \bR^d} h(\tau,\xi) \widehat{W}(d\tau,d\xi)\right|^2 =\int_{\bR \times \bR^d}|h(\tau,\xi)|^2 d\tau d\xi.
\end{equation}

Let $\cS_{\bC}(\bR_{+} \times \bR^{d})$ be the set of $\bC$-valued rapidly decreasing $C^{\infty}$-functions on $\bR_{+} \times \bR^{d}$. For any $\varphi \in \cS_{\bC}(\bR_{+} \times \bR^{d})$, we define
\begin{equation}
\label{def-W-alpha}
W^{\alpha}(\varphi)=\int_{\bR \times \bR^{d}} \cF \varphi(\tau,\xi) \sqrt{g_0(\tau)}|\xi|^{-\alpha/2}\widehat{W}(d\tau,d\xi),
\end{equation}
where $\cF \varphi(\tau,\xi)$ is the Fourier transform in both variables:
\[
\cF \varphi(\tau,\xi) = \int_{\bR_{+} \times \bR^{d}}e^{-i \tau t-i \xi \cdot x} \varphi(t,x)dtdx.
\]
Note that $W^{\alpha}(\varphi)$ is real-valued. We use the notation $W^{\alpha}(\varphi)=\int_{0}^{\infty}\int_{\bR^d}
\varphi(t,x)W^{\alpha}(dt,dx)$.

By the isometry property \eqref{iso-wide-W}, it follows that
\[
\bE[W^{\alpha}(\varphi) W^{\alpha}(\psi)]=\int_{\bR \times \bR^{d}}
\cF \varphi(\tau,\xi) \overline{\cF \psi(\tau,\xi) } g_0(\tau)|\xi|^{-\alpha}d\tau d\xi=\langle \varphi,\psi \rangle_{\cH_{\alpha}}.
\]

This shows that the process $W^{\alpha}$ has the desired covariance function \eqref{cov}. Moreover, all processes $(W^{\alpha})_{\alpha \in (0,d)}$ are defined on the same probability space $(\Omega,\cF,\bP)$ as $\widehat{W}$.

\medskip

Although we do not have a complete description of the space $\cH_{\alpha}$, there is a procedure (given by Theorem 2.6 of \cite{BS17}) which allows us to identify some of its elements. We describe this below. More precisely, it can be proved that $\cH_{\alpha}$ contains all functions $t \mapsto \varphi(t,\cdot) \in \cS'(\bR^d)$ for which the Fourier transform $\cF \varphi(t,\cdot)$ is a function for any $t>0$, there exists a version $\phi_{\xi}(t)$ of $\cF \varphi(t,\cdot)(\xi)$ such that
$(t,\xi) \mapsto \phi_{\xi}(t)$ is measurable on $\bR_{+} \times \bR^d$, $\phi_{\xi} \in L^1(\bR_{+})$ for any $\xi \in \bR^d$, and the Fourier transform $\cF \phi_{\xi}$ of $\phi_{\xi}$ satisfies:
\[
\int_{\bR \times \bR^d} |\cF_t \phi_{\xi}(\tau)|^{2} g_0(\tau) |\xi|^{-\alpha}d\xi d\tau<\infty.
\]
For such a function $\varphi$, the stochastic integral with respect to $W^{\alpha}$ can be represented as:
\begin{equation}
\label{def-int}
\int_0^{\infty}\int_{\bR^d}\varphi(t,x)W^{\alpha}(dt,dx)=\int_{\bR \times \bR^d} \cF_t [\cF_x \varphi(t,\cdot)(\xi)](\tau)\sqrt{g_0(\tau)}|\xi|^{-\alpha/2}
\widehat{W}(d\tau,d\xi),
\end{equation}
where $\cF_t,\cF_x$ denote the Fourier transforms in the time and space variables, respectively.
This representation can be extended to multiple integrals with respect to $W^{\alpha}$:
\begin{equation}
\label{In-alpha}
I_k^{\alpha}(\varphi)=\int_{(\bR \times \bR^d)^k} \cF_t[\cF_x \varphi(\pmb{t_k},\bullet)(\pmb{\xi_k})](\pmb{\tau_k})
\prod_{j=1}^{k}\sqrt{g_0(\tau_j)}|\xi_j|^{-\alpha/2}
\widehat{W}(d\tau_1,d\xi_1) \ldots \widehat{W}(d\tau_k,d\xi_k),
\end{equation}
where $\cF_x$ is the Fourier transform in the space variables and $\cF_t $ is the Fourier transform in the time variables, i.e. the Fourier transform of the function $\pmb{t_k} \mapsto \cF_x \varphi(\pmb{t_k},\bullet)(\pmb{\xi_k})$. Whenever there is no risk of confusion, we drop the lower indices $t,x$ from the Fourier transform notation.

\medskip

The following two lemmas will play an important role below.
The first one can be found for instance in \cite{B12-POTA}, while the second one is proved by elementary methods.

\begin{lemma}
\label{G-lemma}
For any $\alpha \in \big(\max(d-2,0),d\big)$ and $t>0$,
\begin{align*}
& \int_{\bR^d}|\cF G_t(\xi)|^2 |\xi-\eta|^{-\alpha}d\xi \leq K_{d,\alpha} \,t^{r_{\alpha}},
\end{align*}
where
\begin{equation}
\label{def-K}
K_{d,\alpha}=\int_{\bR^d}\frac{1}{1+|\xi|^{2}}|\xi|^{-\alpha}d\xi \leq c_{d}\left( \frac{1}{d-\alpha}+\frac{1}{2-(d-\alpha)}\right),
\end{equation}
with $c_d$ is given by \eqref{def-cd} and
\begin{equation}
\label{def-r}
r_{\alpha}=
\left\{
\begin{array}{ll}
-(d-\alpha)/2 & \mbox{for the heat equation} \\
2-(d-\alpha) & \mbox{for the wave equation}
\end{array} \right.
\end{equation}
\end{lemma}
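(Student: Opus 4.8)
The goal is to bound
\[
\int_{\bR^d}|\cF G_t(\xi)|^2 |\xi-\eta|^{-\alpha}d\xi
\]
uniformly in $\eta$, so I would first argue that the supremum over $\eta$ is attained at $\eta=0$, reducing the problem to bounding $\int_{\bR^d}|\cF G_t(\xi)|^2|\xi|^{-\alpha}d\xi$. For the heat equation $|\cF G_t(\xi)|^2=e^{-t|\xi|^2}$, and for the wave equation $|\cF G_t(\xi)|^2=\sin^2(t|\xi|)/|\xi|^2$; in both cases the integrand is radially symmetric and nonincreasing in a suitable sense, so shifting the kernel $|\xi-\eta|^{-\alpha}$ to center it at the origin can only increase the integral. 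I would make this rigorous either by the rearrangement inequality (the function $|\xi|^{-\alpha}$ is its own symmetric decreasing rearrangement, and $|\cF G_t|^2$ is already symmetric decreasing), or more elementarily by noting that $\int |\cF G_t(\xi)|^2|\xi-\eta|^{-\alpha}d\xi = (|\cF G_t|^2 * |\cdot|^{-\alpha})(\eta)$ is a convolution of a symmetric decreasing function with $|\cdot|^{-\alpha}$, hence symmetric decreasing in $\eta$ and maximized at $\eta=0$.

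Having reduced to $\eta=0$, I would introduce the scaling that produces the power $t^{r_\alpha}$. For the heat case, substitute $\xi=\zeta/\sqrt{t}$ so that $e^{-t|\xi|^2}=e^{-|\zeta|^2}$, $|\xi|^{-\alpha}=t^{\alpha/2}|\zeta|^{-\alpha}$, and $d\xi=t^{-d/2}d\zeta$, giving an overall factor $t^{-(d-\alpha)/2}=t^{r_\alpha}$ times $\int_{\bR^d}e^{-|\zeta|^2}|\zeta|^{-\alpha}d\zeta$, a finite constant whenever $\alpha<d$. For the wave case, substitute $\xi=\zeta/t$ so that $\sin^2(t|\xi|)/|\xi|^2 = t^2\sin^2(|\zeta|)/|\zeta|^2$, $|\xi|^{-\alpha}=t^{\alpha}|\zeta|^{-\alpha}$, $d\xi=t^{-d}d\zeta$, producing $t^{2+\alpha-d}=t^{r_\alpha}$ times $\int_{\bR^d}(\sin^2|\zeta|/|\zeta|^2)|\zeta|^{-\alpha}d\zeta$; this latter integral is finite precisely because near the origin $\sin^2|\zeta|/|\zeta|^2$ is bounded and $|\zeta|^{-\alpha}$ is integrable for $\alpha<d$, while at infinity the decay $|\zeta|^{-2-\alpha}$ is integrable when $2+\alpha>d$, i.e. $d-\alpha<2$, which is exactly the lower constraint $\alpha>\max(d-2,0)$ in the hypothesis.

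To make the constant explicit and uniform, I would not leave the bound as an unspecified scaled integral but instead compare directly against the quantity $K_{d,\alpha}$ in \eqref{def-K}. The cleanest route is to use the pointwise domination $|\cF G_t(\xi)|^2 \leq C\,t^{r_\alpha}(1+|\xi|^2)^{-1}$ after the appropriate scaling: for instance, in the wave case $\sin^2|\zeta|/|\zeta|^2 \leq \min(1,|\zeta|^{-2}) \leq 2/(1+|\zeta|^2)$, and similarly $e^{-|\zeta|^2}\leq C/(1+|\zeta|^2)$ in the heat case, so that after scaling the integral is controlled by a constant multiple of $\int_{\bR^d}(1+|\zeta|^2)^{-1}|\zeta|^{-\alpha}d\zeta = K_{d,\alpha}$. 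The bound $K_{d,\alpha}\leq c_d\big(\tfrac{1}{d-\alpha}+\tfrac{1}{2-(d-\alpha)}\big)$ then follows by passing to polar coordinates, splitting the radial integral $\int_0^\infty (1+\rho^2)^{-1}\rho^{d-1-\alpha}d\rho$ at $\rho=1$, and bounding each piece by the corresponding elementary integral: near zero $\rho^{d-1-\alpha}$ contributes $\tfrac{1}{d-\alpha}$ and near infinity $\rho^{d-3-\alpha}$ contributes $\tfrac{1}{2-(d-\alpha)}$.

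The main obstacle, and the only genuinely delicate point, is the first step: rigorously justifying that $\eta=0$ maximizes the integral so that the bound is truly uniform in $\eta$. The convolution/rearrangement argument handles this cleanly, but one must verify that $|\cF G_t|^2$ is indeed symmetric decreasing in both cases (immediate for the heat kernel, and for the wave kernel it follows since $\sin^2(r)/r^2$ is decreasing on the relevant range after noting it is bounded by $1$). Everything after that is routine scaling and elementary radial estimates; care is only needed to track that the integrability at infinity in the wave case is exactly what forces the constraint $\alpha>\max(d-2,0)$, matching the hypothesis of the lemma.
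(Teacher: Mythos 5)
The paper gives no proof of this lemma (it is simply quoted from \cite{B12-POTA}), so your argument has to stand on its own. The heat-equation half does: $e^{-t|\xi|^2}$ is genuinely symmetric decreasing, so the convolution with $|\cdot|^{-\alpha}$ is maximized at $\eta=0$, and the scaling $\xi=\zeta/\sqrt{t}$ together with $e^{-|\zeta|^2}\le (1+|\zeta|^2)^{-1}$ gives exactly $K_{d,\alpha}\,t^{-(d-\alpha)/2}$. The elementary polar-coordinate bound for $K_{d,\alpha}$ is also correct.

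The gap is in the wave case, and it sits exactly at the step you yourself flagged as delicate. Your reduction to $\eta=0$ rests on the claim that $|\cF G_t(\xi)|^2=\sin^2(t|\xi|)/|\xi|^2$ is symmetric decreasing, and the justification you offer (``$\sin^2(r)/r^2$ is decreasing on the relevant range after noting it is bounded by $1$'') is false: $\sin^2(r)/r^2$ vanishes at $r=k\pi$ and has strictly positive local maxima in between (it equals $0$ at $r=\pi$ and $4/(9\pi^2)>0$ at $r=3\pi/2$), so it is not monotone on $(0,\infty)$, and neither the rearrangement inequality nor the ``convolution of symmetric decreasing functions is symmetric decreasing'' principle applies to it as written. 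The fix is simply to reorder your own steps: first majorize pointwise, $\sin^2(t|\xi|)/|\xi|^2\le\min(t^2,|\xi|^{-2})$, which \emph{is} symmetric decreasing; then the convolution of this majorant with $|\cdot|^{-\alpha}$ is maximized at $\eta=0$, and the scaling $\xi=\zeta/t$ plus $\min(1,|\zeta|^{-2})\le 2(1+|\zeta|^2)^{-1}$ finishes the argument. Note that this route yields $2K_{d,\alpha}t^{r_\alpha}$ rather than $K_{d,\alpha}t^{r_\alpha}$; the factor $2$ is harmless for every application in the paper, but the constant as literally stated cannot come from a pointwise comparison, since near $\zeta=0$ one has $\sin^2|\zeta|/|\zeta|^2>(1+|\zeta|^2)^{-1}$. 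For the record, the proof in the cited literature avoids rearrangement altogether: since $|\xi|^{-\alpha}d\xi$ is the Fourier transform of the nonnegative Riesz kernel $C_{d,\alpha}|x|^{-(d-\alpha)}$, one writes $\int|\cF G_t(\xi)|^2|\xi-\eta|^{-\alpha}d\xi=C_{d,\alpha}\iint e^{-i\eta\cdot(x-y)}|x-y|^{-(d-\alpha)}G_t(dx)G_t(dy)$, which is dominated by its value at $\eta=0$ whenever $G_t\ge 0$; that argument covers the heat kernel in all dimensions and the wave kernel for $d\le 3$ with no monotonicity claim at all.
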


\begin{lemma}
\label{beta-lem}
For any $\beta_1,\ldots,\beta_n>-1$,
\[
\int_{T_n(t)}\prod_{j=1}^{n}(t_{j+1}-t_j)^{\beta_j} d\pmb{t_n}=\frac{\prod_{j=1}^n \Gamma(\beta_j+1)\,t^{|\beta|+n}}{\Gamma(|\beta|+n+1)},
\]
where $|\beta|=\sum_{j=1}^n \beta_j$ and $t_{n+1}=t$.
\end{lemma}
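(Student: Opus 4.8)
The plan is to prove the identity by induction on $n$, integrating out one time variable at a time and reducing each step to the one-dimensional Beta integral. The only computational tool needed is the elementary identity
\[
\int_0^L s^{a}(L-s)^{b}\,ds=\frac{\Gamma(a+1)\Gamma(b+1)}{\Gamma(a+b+2)}\,L^{a+b+1},\qquad a,b>-1,\ L>0,
\]
which follows from the substitution $s=Lv$ together with the definition of the Beta function. For the base case $n=1$, with the convention $t_2=t$, the left-hand side is $\int_0^t(t-t_1)^{\beta_1}\,dt_1=\frac{t^{\beta_1+1}}{\beta_1+1}$, and since $\Gamma(\beta_1+2)=(\beta_1+1)\Gamma(\beta_1+1)$ this equals $\frac{\Gamma(\beta_1+1)\,t^{\beta_1+1}}{\Gamma(\beta_1+2)}$, as claimed.

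For the inductive step I would assume the formula for $n-1$ and integrate out the \emph{outermost} variable $t_n$ first. On $T_n(t)$ the only factors containing $t_n$ are $(t_n-t_{n-1})^{\beta_{n-1}}$ and $(t-t_n)^{\beta_n}$ (recall $t_{n+1}=t$). Setting $L=t-t_{n-1}$ and $s=t_n-t_{n-1}$, the inner integral becomes $\int_0^L s^{\beta_{n-1}}(L-s)^{\beta_n}\,ds$, which the Beta identity converts into $\frac{\Gamma(\beta_{n-1}+1)\Gamma(\beta_n+1)}{\Gamma(\beta_{n-1}+\beta_n+2)}\,(t-t_{n-1})^{\beta_{n-1}+\beta_n+1}$. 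What remains is an integral over $T_{n-1}(t)$ (with convention $t_n=t$) whose integrand is $\prod_{j=1}^{n-2}(t_{j+1}-t_j)^{\beta_j}\cdot(t-t_{n-1})^{\beta'}$ with the merged exponent $\beta'=\beta_{n-1}+\beta_n+1$; since $\beta_{n-1},\beta_n>-1$ we have $\beta'>-1$, so the induction hypothesis applies to the $(n-1)$-tuple $(\beta_1,\dots,\beta_{n-2},\beta')$.

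Applying the hypothesis, and using $\sum_{j=1}^{n-2}\beta_j+\beta'=|\beta|+1$ so that the power of $t$ is $(|\beta|+1)+(n-1)=|\beta|+n$ and the denominator is $\Gamma(|\beta|+n+1)$, the remaining integral equals $\frac{\prod_{j=1}^{n-2}\Gamma(\beta_j+1)\cdot\Gamma(\beta_{n-1}+\beta_n+2)\cdot t^{|\beta|+n}}{\Gamma(|\beta|+n+1)}$. Multiplying by the Beta prefactor, the two occurrences of $\Gamma(\beta_{n-1}+\beta_n+2)$ cancel and one is left with exactly $\frac{\prod_{j=1}^n\Gamma(\beta_j+1)\,t^{|\beta|+n}}{\Gamma(|\beta|+n+1)}$, completing the induction. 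There is no genuine obstacle here beyond index bookkeeping: the key point is to integrate out $t_n$ rather than $t_1$, since peeling off $t_1$ produces a factor $t_2^{\beta_1+1}$ that does not fit the inductive template, whereas peeling off $t_n$ merges the last two exponents and keeps the integrand in simplex form, with the only side condition being the constraint $\beta_{n-1}+\beta_n+1>-1$ that legitimizes the hypothesis. Equivalently, one can bypass the induction altogether via the change of variables to the consecutive gaps $w_0=t_1$ and $w_j=t_{j+1}-t_j$ for $1\le j\le n$, whose Jacobian is $1$, recognizing the result as a Dirichlet integral over $\{w_i>0,\ \sum_{i=0}^n w_i=t\}$ with exponents $(0,\beta_1,\dots,\beta_n)$.
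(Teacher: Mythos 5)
Your proof is correct. The paper itself offers no argument for this lemma---it is merely stated to be ``proved by elementary methods''---so there is nothing to diverge from; the induction on $n$ via the one-dimensional Beta integral, peeling off the outermost variable $t_n$ so that the exponents $\beta_{n-1}$ and $\beta_n$ merge into $\beta_{n-1}+\beta_n+1>-1$, is exactly the standard elementary route the authors have in mind, and your bookkeeping (the telescoping of $\Gamma(\beta_{n-1}+\beta_n+2)$, the power $|\beta|+n$ of $t$) checks out. Your closing observation that the whole identity is a Dirichlet integral with exponents $(0,\beta_1,\dots,\beta_n)$ after passing to the gap variables is an equally valid one-line alternative and arguably the cleaner way to see why the formula holds.
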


The following result will be used in the proof of Theorem \ref{main-th1}, in order to show the finite dimensional convergence.

\begin{lemma}
\label{Ik-conv}
If $\alpha_n \to \alpha^* \in \big(\max(d-2,0),d\big)$, then for any $t>0$, $x \in \bR^d$ and $k\geq 1$,
\[
\bE|I_k^{\alpha_n}(f_{t,x,k}) - I_k^{\alpha^*}(f_{t,x,k})|^2 \to 0, \quad \mbox{as} \quad n\to \infty.
\]
\end{lemma}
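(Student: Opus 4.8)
The plan is to exploit the fact that all the noises $W^{\alpha}$ are constructed from the \emph{single} complex white noise $\widehat{W}$ on $(\Omega,\cF,\bP)$, so that $I_k^{\alpha_n}(f_{t,x,k})$ and $I_k^{\alpha^*}(f_{t,x,k})$ are genuinely correlated random variables and their difference is again one $k$-fold integral against $\widehat{W}$. Writing $h(\pmb{\tau_k},\pmb{\xi_k})=\cF_t[\cF_x f_{t,x,k}(\pmb{t_k},\bullet)(\pmb{\xi_k})](\pmb{\tau_k})$ for the common Fourier integrand in \eqref{In-alpha}, linearity gives
\[
I_k^{\alpha_n}(f_{t,x,k})-I_k^{\alpha^*}(f_{t,x,k})=\int_{(\bR\times\bR^d)^k} h(\pmb{\tau_k},\pmb{\xi_k})\prod_{j=1}^k\sqrt{g_0(\tau_j)}\,D_n(\pmb{\xi_k})\,\widehat{W}(d\tau_1,d\xi_1)\cdots\widehat{W}(d\tau_k,d\xi_k),
\]
where $D_n(\pmb{\xi_k}):=\prod_{j=1}^k|\xi_j|^{-\alpha_n/2}-\prod_{j=1}^k|\xi_j|^{-\alpha^*/2}$. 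The weight $\prod_j\sqrt{g_0(\tau_j)}\,D_n(\pmb{\xi_k})$ is invariant under joint permutations of the pairs $(\tau_j,\xi_j)$, so the isometry for multiple $\widehat{W}$-integrals reduces the problem to a deterministic integral:
\[
\bE\big|I_k^{\alpha_n}(f_{t,x,k})-I_k^{\alpha^*}(f_{t,x,k})\big|^2 = k!\int_{(\bR\times\bR^d)^k}|\widetilde h(\pmb{\tau_k},\pmb{\xi_k})|^2\prod_{j=1}^k g_0(\tau_j)\,\big|D_n(\pmb{\xi_k})\big|^2\,d\pmb{\tau_k}\,d\pmb{\xi_k},
\]
with $\widetilde h$ the symmetrization of $h$. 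The whole lemma now amounts to showing this integral tends to $0$, which I would do by dominated convergence.

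For the pointwise convergence, note that for a.e. $\pmb{\xi_k}$ (all coordinates nonzero) the map $\alpha\mapsto\prod_j|\xi_j|^{-\alpha/2}$ is continuous, so $D_n(\pmb{\xi_k})\to0$ and the integrand tends to $0$ a.e. For the domination, since $\alpha_n\to\alpha^*\in(\max(d-2,0),d)$ we may fix a compact interval $[a,b]\subset(\max(d-2,0),d)$ containing all but finitely many $\alpha_n$ together with $\alpha^*$. Using $|\xi|^{-\alpha/2}\le|\xi|^{-a/2}+|\xi|^{-b/2}$ for $\alpha\in[a,b]$, one gets
\[
|D_n(\pmb{\xi_k})|^2 \le 4\prod_{j=1}^k\big(|\xi_j|^{-a/2}+|\xi_j|^{-b/2}\big)^2,
\]
and expanding the product bounds the integrand by a finite sum of terms $|\widetilde h|^2\prod_j g_0(\tau_j)\prod_j|\xi_j|^{-\gamma_j}$ with each exponent $\gamma_j\in\{a,(a+b)/2,b\}\subset(\max(d-2,0),d)$.

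Finally I would prove finiteness of each such dominating term. Here I would handle the symmetrization by the convexity bound $|\widetilde h|^2\le\frac{1}{k!}\sum_{\rho\in S_k}|h\circ\rho|^2$; changing variables in each summand (the factor $\prod_j g_0(\tau_j)$ is permutation-invariant) reduces the question to $\int|h|^2\prod_j g_0(\tau_j)\prod_j|\xi_j|^{-\gamma_{\sigma(j)}}\,d\pmb{\tau_k}\,d\pmb{\xi_k}$ for permutations $\sigma$. On the \emph{unsymmetrized} $h$, which has the cascade product structure coming from $\prod\cF G$, this is exactly the computation that establishes finiteness of $\bE|I_k^{\alpha}(f_{t,x,k})|^2$: pass from the spectral time variables back to physical time via \eqref{phi-high-dim}, bound the spatial Fourier integrals iteratively with Lemma~\ref{G-lemma} applied with exponent $\gamma_{\sigma(j)}$ at each step (legitimate since $\gamma_{\sigma(j)}\in(\max(d-2,0),d)$ forces $K_{d,\gamma_{\sigma(j)}}<\infty$ and $r_{\gamma_{\sigma(j)}}>-1$), and integrate over the simplex $T_k(t)$ with Lemma~\ref{beta-lem}. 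Each term is finite, hence so is the dominating integral, and dominated convergence gives the claim.

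I expect the main obstacle to be precisely this domination step: verifying that the \emph{mixed-exponent} second moments are finite, which requires controlling the non-symmetric spectral weight through the convexity bound and checking that Lemma~\ref{G-lemma} and Lemma~\ref{beta-lem} remain applicable coordinate by coordinate when the exponents $\gamma_j$ differ. The conceptual point underlying the argument is that the processes $W^{\alpha}$ share the common driving noise $\widehat{W}$; without this joint construction the cross term in $\bE|I_k^{\alpha_n}-I_k^{\alpha^*}|^2$ would not appear, and the difference would not vanish.
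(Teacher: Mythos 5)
Your proposal is correct and follows essentially the same route as the paper: both exploit the common driving noise $\widehat{W}$ to write the difference as a single $k$-fold integral, reduce via the isometry to a deterministic integral, and apply dominated convergence after trapping $\alpha_n$ in a compact interval $[a,b]\subset(\max(d-2,0),d)$, with integrability of the dominating function checked through Lemma \ref{G-lemma} and Lemma \ref{beta-lem}. The only (immaterial) differences are that the paper discards the symmetrization at the outset via $\|\widetilde{H}\|\le\|H\|$, passes to physical time and diagonalizes by Cauchy--Schwarz \emph{before} invoking dominated convergence, and uses the dominating weight $|\xi_j|^{-b}1_{\{|\xi_j|\le1\}}+|\xi_j|^{-a}1_{\{|\xi_j|>1\}}$ in place of your expansion of $(|\xi_j|^{-a/2}+|\xi_j|^{-b/2})^2$.
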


\begin{proof} By \eqref{In-alpha},
\[
I_k^{\alpha_n}(f_{t,x,k}) - I_k^{\alpha^*}(f_{t,x,k})
=\int_{(\bR \times \bR^d)^n} H_k^{(n)}(\pmb{\tau_k},\pmb{\xi_k})
\widehat{W}(d\tau_1,d\xi_1) \ldots \widehat{W}(d\tau_n,d\xi_n),
\]
where
\[
H_k^{(n)}(\pmb{\tau_k},\pmb{\xi_k})=\cF \phi_{\pmb{\xi_k}}(\pmb{\tau_k})
\prod_{j=1}^{k}\sqrt{g_0(\tau_j)}
\left(\prod_{j=1}^k|\xi_j|^{-\alpha_n/2}-
\prod_{j=1}^k|\xi_j|^{-\alpha^*/2}\right)
\]
and
\begin{equation}
\label{def-phi-k}
\phi_{\pmb{\xi_k}}(\pmb{t_k}):=\cF f_{t,x,k}(\pmb{t_k},\cdot)(\pmb{\xi_k})=e^{-i(\xi_1+\ldots+\xi_k)\cdot x}
\prod_{j=1}^k \cF G_{t_{j+1}-t_j}(\xi_1+\ldots+\xi_j)
\end{equation}
with $t_{k+1}=t$. Here, we use convention \eqref{convention}. Hence,
\[
Q_n :=\bE|I_k^{\alpha_n}(f_{t,x,k}) -I_k^{\alpha^*}(f_{t,x,k})|^2=k! \| \widetilde{H}_k^{(n)}\|_{L_{\bC}^2((\bR \times \bR^d)^k)}^2 \leq k! \| H_k^{(n)}\|_{L_{\bC}^2((\bR \times \bR^d)^k)}^2
\]
where $\widetilde{H}_k^{(n)}$ is the symmetrization of $H_k^{(n)}$. Using \eqref{phi-high-dim}, we have:
\begin{align}
\nonumber
Q_n & \leq k! \int_{(\bR^d)^k} \left|\prod_{j=1}^k|\xi_j|^{-\alpha_n/2}-
\prod_{j=1}^k|\xi_j|^{-\alpha^*/2}\right|^2 \int_{\bR^k}  |\cF \phi_{\pmb{\xi_k}}(\pmb{\tau_k})|^2 \prod_{j=1}^k g_0(\tau_j)
d\pmb{\tau_k} d\pmb{\xi_k}\\
\label{est-Qn}
&= k! \int_{T_k(t)}\int_{T_k(t)} \prod_{j=1}^k \gamma_0(t_j-s_j)A_k^{(n)}(\pmb{t_k},\pmb{s_k})d\pmb{t_k}d\pmb{s_k}
\end{align}
where $T_k(t)=\{0<t_1<\ldots<t_k<t\}$ and
\begin{align*}
A_k^{(n)}(\pmb{t_k},\pmb{s_k})=\int_{(\bR^d)^k}
\phi_{\pmb{\xi_k}}(\pmb{t_k})
\phi_{\pmb{\xi_k}}(\pmb{s_k}) \left|\prod_{j=1}^k|\xi_j|^{-\alpha_n/2}-
\prod_{j=1}^k|\xi_j|^{-\alpha^*/2}\right|^2 d\pmb{\xi_k}.
\end{align*}
By Cauchy-Schwarz inequality and inequality $ab \leq \frac{1}{2}(a^2+b^2)$, we have:
\begin{equation}
\label{A-ineq}
A_k^{(n)}(\pmb{t_k},\pmb{s_k}) \leq A_k^{(n)}(\pmb{t_k},\pmb{t_k})^{1/2} A_k^{(n)}(\pmb{s_k},\pmb{s_k})^{1/2} \leq \frac{1}{2} \big( A_k^{(n)}(\pmb{t_k},\pmb{t_k})+A_k^{(n)}(\pmb{s_k},\pmb{s_k}) \big).
\end{equation}
Hence,
\[
Q_n \leq k! \int_{T_k(t)} \int_{T_k(t)} A_k^{(n)}(\pmb{t_k},\pmb{t_k})
\left(\prod_{j=1}^k \gamma_0(t_j-s_j)
d \pmb{s_k}\right) d \pmb{t_k} \leq k! \Gamma_{0,t}^k\int_{T_k(t)} A_k^{(n)}(\pmb{t_k},\pmb{t_k}) d \pmb{t_k},
\]
where $\Gamma_{0,t}=\int_{-t}^t \gamma_0(s)ds$. It suffices to show that the integral appearing in the upper bound above converges to $0$ as $n\to \infty$, i.e.
\[
T_n:=\int_{T_k(t)} \int_{(\bR^d)^k} \prod_{j=1}^k \big|\cF G_{t_{j+1}-t_j}(\xi_1+\ldots+\xi_j) \big|^2 \left|\prod_{j=1}^k|\xi_j|^{-\alpha_n/2}-
\prod_{j=1}^k|\xi_j|^{-\alpha^*/2}\right|^2
  d\pmb{\xi_k} d \pmb{t_k} \to 0.
\]

Note that the integrand converges pointwise  to $0$ on $T_k(t) \times (\bR^d)^k$, as $n \to \infty$.
To apply the Dominated Convergence Theorem, we need to bound the integrand by an integrable function. In fact, it suffices to find an integrable bound for the term depending on $\alpha_n$:
\[
g_n(\pmb{t_k},\pmb{\xi_k}):=\prod_{j=1}^k \big|\cF G_{t_{j+1}-t_j}(\xi_1+\ldots+\xi_j) \big|^2 \prod_{j=1}^k|\xi_j|^{-\alpha_n}.
\]
We have to find a function $g$ on $T_k(t) \times (\bR^d)^k$, such that
\[
g_n(\pmb{t_k},\pmb{\xi_k}) \leq g(\pmb{t_k},\pmb{\xi_k}) \quad \mbox{for all $n$}, \quad \mbox{and} \quad \int_{T_k(t)} \int_{(\bR^d)^k}g(\pmb{t_k},\pmb{\xi_k}) d\pmb{\xi_k} d\pmb{t_k}<\infty.
\]

Recall that $d-2 < \alpha^{\ast} < d$. Fix numbers $a$ and $b$ such that $d-2 < a < \alpha^{\ast} < b < d$. Since $\alpha_n \to \alpha^{\ast}$, there exists $N \in \mathbb{N}$ such that
$a \le \alpha_n \le b$ for all $n \ge N$.
We take:
\[
g(\pmb{t}_k,\pmb{\xi_k}) := \prod_{j=1}^k \big|\cF G_{t_{j+1}-t_j}(\xi_1+\ldots+\xi_j)\big|^2
 \big(|\xi_j|^{-b}1_{\{ |\xi_j| \le 1 \}}  + |\xi_j|^{-a}1_{\{|\xi_j| > 1 \}}\big).
\]
Note that, for any $j=1,\ldots,k$,
\begin{align*}
& \int_{\bR^d} |\cF G_{t_{j+1}-t_j}(\xi_1+\ldots+\xi_j)|^2
 \big(|\xi_j|^{-b}1_{\{ |\xi_j| \le 1 \}}  + |\xi_j|^{-a}1_{\{|\xi_j| > 1 \}}\big) d\xi_j \\
 &\leq C_t\int_{|\xi_j|\leq 1}|\xi_j|^{-b}d\xi_j+\int_{\bR^d}
|\cF G_{t_{j+1}-t_j}(\xi_1+\ldots+\xi_j)|^2 |\xi_j|^{-a}d\xi_j,
\end{align*}
where $C_t=1$ for the heat equation, and $C_t=t^2$ for the wave equation.
By Lemma \ref{G-lemma}, the last integral is bounded by $K_{d,a}(t_{j+1}-t_j)^{r_{a}}$, where $K_{d,a}$ and $r_{a}$ are given by \eqref{def-K}, respectively \eqref{def-r}. Let
$C_{a,b,d}=\frac{c_d}{d-b}+K_{d,a}$, where $c_d$ is the area of $\{z\in \bR^d;|z|=1\}$. Then,
\[
\int_{T_k(t)} \int_{(\bR^d)^k}g(\pmb{t_k},\pmb{x_k}) d\pmb{t_k} d\pmb{\xi_k} \leq C_{a,b,d}^k\int_{T_k(t)}\prod_{j=1}^{k}
\Big(C_t+(t_{j+1}-t_j)^{r_{a}}\Big) d\pmb{t_k},
\]
and the last integral is finite due to Lemma \ref{beta-lem}, since $r_a>-1$.
\end{proof}

We now start the preparations for the proof of tightness.
We begin by recalling some path properties of the solution $u^{\alpha}$.
The next result is a restatement Theorem 3.2 of \cite{BQS} (for heat equation), respectively Theorem 8.3 of \cite{BS17} (for the wave equation), in the case when the spatial covariance kernel of the noise is the Riesz kernel $f(x)=C_{d,\alpha}|x|^{-(d-\alpha)}$. We include this result for the sake of completeness, but we cannot use it directly for the proof of tightness, since we need the explicit form of the constant $C$ as a function of $\alpha$, to be able to bound it uniformly for all $\alpha$ in a compact interval $[a,b]$.

\begin{theorem}
\label{Holder-th}
Assume that $\alpha \in \big(\max(d-2,0),d\big)$.
Let $p\geq 2$ and $T>0$ be arbitrary, and $K \subset \bR^d$ be a compact set.
For any $\beta \in (\frac{d-\alpha}{2},1)$, $t,t' \in [0,T]$, and $x,x' \in K$, we have
\begin{align}
\label{Holder-eq}
& \|u^{\alpha}(t,x)-u^{\alpha}(t',x')\|_p \leq C\big(|t-t'|^{\gamma} +|x-x'|^{1-\beta}\big),
\end{align}
where $\gamma=\frac{1-\beta}{2}$ for the heat equation, and $\gamma=1-\beta$ for the wave equation. Here
$C$ is a constant that depends on $(\alpha,d,p,T,K,\beta)$.
\end{theorem}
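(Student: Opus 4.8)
The plan is to run the classical Wiener-chaos argument: expand the increment $u^{\alpha}(t,x)-u^{\alpha}(t',x')$ in chaoses, use hypercontractivity to pass from $L^p(\Omega)$ to $L^2(\Omega)$ on each chaos, and then estimate the $\cH^{\otimes n}$-norms of the kernel increments with Lemmas \ref{G-lemma} and \ref{beta-lem}. Since $u^{\alpha}=1+\sum_{n\ge1}I_n^{\alpha}(f_{t,x,n})$, the difference equals $\sum_{n\ge1}I_n^{\alpha}(f_{t,x,n}-f_{t',x',n})$, and each summand lies in the $n$-th chaos, on which $\|F\|_p\le(p-1)^{n/2}\|F\|_2$ for $p\ge2$. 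Using $\|I_n^{\alpha}(g)\|_2^2=n!\,\|\widetilde g\|_{\cH^{\otimes n}}^2$ and the triangle inequality to separate a temporal increment $f_{t,x,n}-f_{t',x,n}$ from a spatial one $f_{t',x,n}-f_{t',x',n}$, the whole estimate reduces to proving, for each increment $\Delta_n$, a bound of the form
\[
n!\,\|\widetilde{\Delta_n}\|_{\cH^{\otimes n}}^2\le \frac{(C_T)^n}{(n!)^{\theta}}\,\rho_n, \qquad \rho_n\in\{\,|t-t'|^{2\gamma},\ |x-x'|^{2(1-\beta)}\,\},
\]
with some $\theta>0$, so that $\sum_{n\ge1}(p-1)^{n/2}\big(n!\|\widetilde{\Delta_n}\|^2\big)^{1/2}$ converges and is controlled by $\rho_n^{1/2}$.

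For the $\cH^{\otimes n}$-norms I would work in the Fourier domain. By \eqref{def-phi-k} the spatial Fourier transform of $f_{t,x,n}$ equals $e^{-i(\xi_1+\cdots+\xi_n)\cdot x}\prod_{j=1}^n\cF G_{t_{j+1}-t_j}(\xi_1+\cdots+\xi_j)$ with $t_{n+1}=t$, so the dependence on $x$ sits in the single exponential $e^{-ia\cdot x}$, $a=\xi_1+\cdots+\xi_n$, and (by convention \eqref{convention}) the dependence on $t$ sits only in the last propagator $\cF G_{t-t_n}$ and in the simplex $T_n(t)$. Hence both increments modify only the last factor: for the spatial increment the elementary bound $|e^{-ia\cdot x}-e^{-ia\cdot x'}|\le C|a|^{1-\beta}|x-x'|^{1-\beta}$ inserts a factor $|a|^{2(1-\beta)}|x-x'|^{2(1-\beta)}$, while for the temporal increment I would bound $|\cF G_{t-t_n}-\cF G_{t'-t_n}|$ by interpolation inequalities for $\cF G$ (using $1-e^{-r}\le r^{\theta}$ for the heat kernel, a H\"older bound on $\sin$ for the wave kernel, with $\theta$ chosen to match the target exponent); this is exactly where the two scalings produce $\gamma=(1-\beta)/2$ for the heat equation and $\gamma=1-\beta$ for the wave equation. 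After the substitution $\eta_j=\xi_1+\cdots+\xi_j$ the spectral density becomes $|\eta_1|^{-\alpha}\prod_{j\ge2}|\eta_j-\eta_{j-1}|^{-\alpha}$, and I can integrate out $\eta_n,\eta_{n-1},\dots,\eta_1$ in turn: each of the first $n-1$ integrations is controlled by Lemma \ref{G-lemma}, producing $K_{d,\alpha}(t_{j+1}-t_j)^{r_{\alpha}}$, while the last ($\eta_n$) integration carries the extra power $|\eta_n|^{2(1-\beta)}$, which lowers the exponent $r_{\alpha}$ of the corresponding time gap (by $1-\beta$ for the heat kernel). The product of time-gap powers is then integrated over $T_n(t)$ by Lemma \ref{beta-lem}, and the fact that $\beta>(d-\alpha)/2$ makes this lowered exponent exceed $-1$ in the heat case is precisely the lower restriction on $\beta$ in the statement.

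The delicate point — and the step I expect to be the main obstacle — is to retain a \emph{summable} factorial decay while handling the temporal covariance $\prod_{j}\gamma_0(t_j-s_j)$ together with the symmetrization. The naive route of Lemma \ref{Ik-conv}, namely $n!\|\widetilde{\Delta_n}\|^2\le n!\|\Delta_n\|^2$ followed by diagonalizing the double time integral at the cost of $\Gamma_{0,t}^n=\big(\int_{-t}^t\gamma_0\big)^n$, is adequate when $r_{\alpha}+1>1$ (the wave case, since $r_{\alpha}+1=3-(d-\alpha)>1$), but it loses too much for the heat equation, where $r_{\alpha}+1=1-(d-\alpha)/2<1$ and the spurious $n!$ destroys convergence. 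In that case one must instead use the identity $n!\|\widetilde{\Delta_n}\|^2=\sum_{\sigma\in S_n}\langle\Delta_n,\Delta_n\circ\sigma\rangle_{\cH^{\otimes n}}$, which reassembles the full cube in the time variables, and then apply a genuine fractional-integration estimate for $\int_{[0,t]^{2n}}\prod_j\gamma_0(t_j-s_j)(\cdots)$ (the colored-in-time analogue of Lemma \ref{beta-lem}, obtained via \eqref{phi-high-dim} or a Hardy--Littlewood--Sobolev bound in each time coordinate) that yields a decay of order $1/\Gamma(\kappa n+1)$ with $\kappa>0$. The white-noise-in-time case is transparent here, since all off-diagonal terms $\sigma\ne\mathrm{id}$ vanish and the chaos factorial cancels automatically; the colored case requires carrying the off-diagonal correlations through the Fourier estimates above.

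Once such a summable bound is established for both the temporal and the spatial increment, the two pieces combine by the triangle inequality to give \eqref{Holder-eq}, with the constant $C$ absorbing $\Gamma_{0,T}$, $K_{d,\alpha}$, the Beta/Gamma constants from Lemma \ref{beta-lem}, the hypercontractivity factor $(p-1)^{1/2}$, and the diameter of the compact set $K$. I would treat the heat and wave equations in parallel, branching only at the interpolation step for $\cF G$ and at the choice of scaling exponent, which is what distinguishes the parabolic value $\gamma=(1-\beta)/2$ from the hyperbolic value $\gamma=1-\beta$.
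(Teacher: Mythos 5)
Your overall architecture matches the paper's: chaos expansion, hypercontractivity to reduce to $L^2$ on each chaos, a triangle-inequality split into temporal and spatial increments, Fourier-domain estimates in which only the last propagator (resp.\ the phase $e^{-ia\cdot x}$) carries the increment, Lemma \ref{G-lemma}/Lemma \ref{G-lem2}-type bounds for the spectral integrations, and Lemma \ref{beta-lem} for the resulting simplex integral. (The paper does not actually reprove this statement --- it quotes \cite{BQS} and \cite{BS17} --- but it runs exactly this argument for the uniform-in-$\alpha$ version, Theorem \ref{Th-uniform-C}.) Your identification of $\beta>\frac{d-\alpha}{2}$ as the threshold making the last time-gap exponent exceed $-1$, and of the interpolation step for $\cF G$ as the source of $\gamma=\frac{1-\beta}{2}$ versus $\gamma=1-\beta$, is correct.

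The genuine problem is your third paragraph. The ``spurious $n!$'' you fear in the heat case is an artifact of bounding $\|\widetilde{\Delta_n}\|\leq\|\Delta_n\|$ \emph{before} diagonalizing (as in Lemma \ref{Ik-conv}, where it is harmless because $k$ is fixed); it is not inherent to the Cauchy--Schwarz diagonalization of $\prod_j\gamma_0(t_j-s_j)$. Keep the symmetrization: since $f_{t,x,n}$ carries the indicator of the ordered simplex, for each fixed $\pmb{t_n}$ with distinct entries exactly one permutation survives, so $\widetilde{f}_{t,x,n}(\pmb{t_n},\cdot)=\frac{1}{n!}f_{t,x,n}(t_{\rho(1)},x_{\rho(1)},\ldots,t_{\rho(n)},x_{\rho(n)})$. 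Diagonalizing via \eqref{A-ineq} at cost $\Gamma_{0,t}^n$ and converting $\int_{[0,t]^n}$ into $n!\int_{T_n(t)}$ then yields $n!\,\|\widetilde{\Delta_n}\|_{\cH^{\otimes n}}^2\leq \Gamma_{0,t}^n\int_{T_n(t)}(\cdots)\,d\pmb{t_n}$ with no leftover factorial, and summability over $n$ comes from the denominator $\Gamma\big(n(r_\alpha+1)+1\big)$ supplied by Lemma \ref{beta-lem}, which only needs $r_\alpha+1>0$ --- true for the heat equation under Dalang's condition; $r_\alpha>0$ is not required. This is precisely how the paper handles $A_n^{\alpha}$, $B_n^{\alpha}$, $C_n^{\alpha}$ in Theorem \ref{Th-uniform-C} and $\bE|I_k^{\alpha}(f_{t,x,k})|^2$ in Step 1 of Theorem \ref{main-th1}. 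Your proposed substitute (expanding $n!\|\widetilde{\Delta_n}\|^2=\sum_{\sigma}\langle\Delta_n,\Delta_n\circ\sigma\rangle_{\cH^{\otimes n}}$ plus a Hardy--Littlewood--Sobolev bound in time) is not only unnecessary but unavailable at the stated level of generality: the Littlewood--Hardy inequality \eqref{LH-ineq} requires $\gamma_0$ to be the fractional kernel \eqref{def-gamma0}, whereas the present theorem assumes only a nonnegative-definite $\gamma_0$ with locally finite $\Gamma_{0,t}$; the paper reserves that tool for the rough heat case of Section \ref{section-rough}, where the diagonalization genuinely fails. Finally, note that the temporal increment also produces the ``growing simplex'' piece $\widetilde{f}_{t+h,x,n}1_{[0,t+h]^n\setminus[0,t]^n}$ (the term $B_n^{\alpha}(t,h)$ in the paper), which your sketch does not estimate.
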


By Theorem \ref{Holder-th}, the process $u^{\alpha}$ has a continuous modification in $(t,x)$. We work with this modification, which we denote also by $u^{\alpha}$.

In the case of the heat equation, we will use the following constant:
\[
k_{\alpha}(t):=\int_{\bR^d} e^{-t|\xi|^2} |\xi|^{-\alpha}d\xi=\frac{c_d}{2}\Gamma\left(\frac{d-\alpha}{2}\right) \, t^{-\frac{d-\alpha}{2}},
\]
with $c_d$ given by \eqref{def-cd}. We will need the following result.

\begin{lemma}
\label{G-lem2}
Let $\max(d-2,0)<a<b<d$ and $\beta \in (\frac{d-a}{2},1)$ be arbitrary. For any $\alpha \in [a,b]$,

(a) for any $t\in [0,T]$,
\[
\sup_{\eta \in \bR^d} \int_{\bR^d} \big|\cF G_{t}(\xi+\eta)\big|^2 |\xi|^{-\alpha}d\xi \leq
\left\{
\begin{array}{ll}
k_{\alpha}(t) & \mbox{for the heat equation} \\
C_{T,d,a,b,\beta} t^{2(1-\beta)} & \mbox{for the wave equation}
\end{array} \right.
\]

(b) for any $t>0$,
\[
\sup_{\eta \in \bR^d} \int_{\bR^d} \big|\cF \big(G_{t+h}-G_t\big)(\xi+\eta)\big|^2 |\xi|^{-\alpha}d\xi \leq
\left\{
\begin{array}{ll}
C_{\beta} h^{1-\beta} t^{\beta-1}k_{\alpha}(t/2) & \mbox{for the heat equation} \\
C_{d,a,b,\beta} h^{2(1-\beta)} & \mbox{for the wave equation}
\end{array} \right.
\]

(c) for any compact set $K \subset \bR^d$ and for any $z \in K$,
\[
\sup_{\eta \in \bR^d} \int_{\bR^d} |e^{i(\xi+\eta)\cdot z}-1|^2
|\cF G_t(\xi+\eta)|^2 |\xi|^{-\alpha}d\xi \leq
\left\{
\begin{array}{ll}
C_{\beta}|z|^{2(1-\beta)}t^{\beta-1}k_{\alpha}(t/2) & \mbox{for the heat equation} \\
C_{T,K,d,a,b,\beta} |z|^{2(1-\beta)} & \mbox{for the wave equation}
\end{array} \right.
\]
\end{lemma}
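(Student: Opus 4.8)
The plan is to reduce all six bounds to a single mechanism based on symmetric rearrangement. After the substitution $\zeta = \xi+\eta$, each of the integrals has the form $\int_{\bR^d}\Phi(\zeta)\,|\zeta-\eta|^{-\alpha}\,d\zeta$ for a suitable nonnegative $\Phi$ depending on $t$ (and on $h$, $z$, $\beta$ in parts (b), (c)). Since $|\cdot|^{-\alpha}$ is symmetric decreasing, my first step is to dominate $\Phi$ by a nonnegative symmetric-decreasing function $\Psi$. Then $(\Psi * |\cdot|^{-\alpha})(\eta)$ is itself symmetric decreasing (the convolution of two nonnegative symmetric-decreasing functions is symmetric decreasing), so its supremum over $\eta$ is attained at $\eta=0$. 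This gives $\sup_{\eta}\int \Phi(\zeta)|\zeta-\eta|^{-\alpha}d\zeta \le \int \Psi(\zeta)|\zeta|^{-\alpha}d\zeta$, a radial integral that I evaluate in polar coordinates. Everything then reduces to choosing a sharp symmetric-decreasing majorant $\Psi$ and bounding the resulting constants uniformly for $\alpha\in[a,b]$.

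For the heat equation I can often take $\Phi=\Psi$. In (a), $\Phi(\zeta)=e^{-t|\zeta|^2}$ is already symmetric decreasing and $\int e^{-t|\zeta|^2}|\zeta|^{-\alpha}d\zeta=k_{\alpha}(t)$, which is the claim. In (b), I factor $\cF(G_{t+h}-G_t)(\zeta)=e^{-t|\zeta|^2/2}(e^{-h|\zeta|^2/2}-1)$ and use $(1-e^{-x})^2\le\min(x,1)\le x^{1-\beta}$ with $x=h|\zeta|^2/2$, followed by the explicit maximization $|\zeta|^{2(1-\beta)}e^{-t|\zeta|^2/2}\le C_{\beta}t^{\beta-1}$; this leaves the symmetric-decreasing majorant $\Psi(\zeta)=C_{\beta}h^{1-\beta}t^{\beta-1}e^{-t|\zeta|^2/2}$, whose radial integral is $k_{\alpha}(t/2)$. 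Part (c) runs identically once I bound $|e^{i\zeta\cdot z}-1|^2=4\sin^2(\tfrac12\zeta\cdot z)\le C_{\beta}|z|^{2(1-\beta)}|\zeta|^{2(1-\beta)}$, which produces the same Gaussian majorant with $h^{1-\beta}$ replaced by $|z|^{2(1-\beta)}$.

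For the wave equation I will use $|\cF G_t(\zeta)|^2=\sin^2(t|\zeta|)/|\zeta|^2\le\min(t^2,|\zeta|^{-2})$, which is symmetric decreasing; the resulting radial integral is exactly the one controlled in Lemma \ref{G-lemma}, giving a bound $\le K_{d,\alpha}t^{2-(d-\alpha)}$, and since $2-(d-\alpha)>2(1-\beta)$ (because $\beta>(d-a)/2\ge(d-\alpha)/2$) and $t\le T$ this is $\le Ct^{2(1-\beta)}$, which is (a). The same argument with $|\cF(G_{t+h}-G_t)(\zeta)|^2\le C\min(h^2,|\zeta|^{-2})$, obtained from $|\sin((t+h)r)-\sin(tr)|\le2|\sin(hr/2)|$, gives (b). The one genuinely non-routine case is wave (c): after extracting $|z|^{2(1-\beta)}$ the radial profile is $|\zeta|^{2(1-\beta)}\min(t^2,|\zeta|^{-2})$, which increases and then decreases and so is \emph{not} symmetric decreasing. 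The hard part is to find the right majorant; I will use $\Psi(\zeta)=\min(t^{2\beta},|\zeta|^{-2\beta})$, which agrees with the profile on $|\zeta|>1/t$, dominates it on $|\zeta|\le1/t$, and is symmetric decreasing. Its radial integral equals $c_d\big(\tfrac{1}{d-\alpha}+\tfrac{1}{2\beta+\alpha-d}\big)t^{2\beta-(d-\alpha)}$, and $t^{2\beta-(d-\alpha)}\le T^{2\beta-(d-\alpha)}$ since $2\beta>d-\alpha$.

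I expect the main obstacle to be exactly the uniform control of the supremum over the shift $\eta$; the rearrangement reduction is what makes this tractable, but it forces me to replace each integrand by a symmetric-decreasing majorant that is still sharp enough to reproduce the correct power of $t$, $h$, or $|z|$ — immediate for the heat kernel and for wave (a), (b), but requiring the nontrivial choice $\min(t^{2\beta},|\zeta|^{-2\beta})$ in wave (c). The remaining work is bookkeeping: I will check that $a>\max(d-2,0)$, $b<d$, and $\beta>(d-a)/2$ keep the radial-integral constants $\tfrac{1}{d-\alpha}$, $\tfrac{1}{2-(d-\alpha)}$, $\tfrac{1}{2\beta+\alpha-d}$ finite and uniformly bounded over $\alpha\in[a,b]$, and that the exponents $2-(d-\alpha)$ and $2\beta-(d-\alpha)$ stay positive so that $t,h\le T$ can be used to pass to the stated powers.
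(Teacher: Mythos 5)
Your proposal is correct, and it is a genuinely different route from the paper's: the paper does not prove Lemma \ref{G-lem2} from scratch but simply refers to Proposition 3.1 of \cite{BQS} (heat) and Proposition 7.4 of \cite{CD08} (wave), adding only the remark that fixing $\beta\in(\tfrac{d-a}{2},1)$ a priori makes the condition $\int(1+|\xi|^2)^{-\beta}|\xi|^{-\alpha}d\xi<\infty$ hold simultaneously for all $\alpha\in[a,b]$, so that the constants extracted from those proofs are uniform. Your argument is self-contained and organized around a single device — dominate the integrand by a symmetric-decreasing majorant $\Psi$ and use that $\Psi*|\cdot|^{-\alpha}$ is symmetric decreasing, so the supremum over the shift $\eta$ sits at $\eta=0$ — whereas the cited references typically dispose of the shift via the spatial-domain representation (nonnegativity of $G_t$ and of the Riesz kernel $|x|^{-(d-\alpha)}$, so that $\cos(\eta\cdot(x-y))\leq 1$ can be dropped). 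The two mechanisms are equivalent in effect, but yours has the advantage of reducing every case to an explicit radial integral, which makes the uniformity in $\alpha\in[a,b]$ (the whole point of restating the lemma) transparent rather than something to be checked by rereading external proofs; the price is the extra step of finding the majorant $\min(t^{2\beta},|\zeta|^{-2\beta})$ for the non-monotone profile $|\zeta|^{2(1-\beta)}\min(t^2,|\zeta|^{-2})$ in wave (c), which you handle correctly. One small point worth recording: in wave (b) your computation honestly yields $h^{2-(d-\alpha)}$, and passing to $h^{2(1-\beta)}$ uses $2\beta>d-\alpha$ together with $h$ bounded (say $h\leq T$); the constant in the displayed statement should therefore carry a $T$-dependence, as it already does in parts (a) and (c) for the wave equation and as it does where the bound is actually used in \eqref{bound-psi-w}. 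This is an imprecision of the statement, not of your proof.
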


\begin{proof}
This is result is proved exactly as Proposition 3.1 of \cite{BQS} for the heat equation, respectively Proposition 7.4 of \cite{CD08} for the wave equation, except that here we give a uniform bound for the constants, for all $\alpha \in [a,b]$ (which can be obtained after examining closely the proofs).
In these references, the results are proved for {\em fixed} $\alpha \in \big(\max(d-2,0),d\big)$ and the exponents are given in terms of an arbitrary $\beta>0$ which satisfies the condition:
\begin{equation}
\label{beta-cond}
\int_{\bR^d}\left(\frac{1}{1+|\xi|^2}\right)^{\beta}
|\xi|^{-\alpha}d\xi<\infty,
\end{equation}
which is equivalent to the requirement $\beta \in \left(\frac{d-\alpha}{2},1\right)$.
In our case, since we want to obtain a {\em uniform} bound for all $\alpha \in [a,b]$, we fix an apriori value $\beta \in \left( \frac{d-a}{2},1\right)$, which guarantees that condition \eqref{beta-cond} holds for all $\alpha \in [a,b]$.
\end{proof}

The next result gives the desired upper bound for the increments of the solution (uniformly in $\alpha \in [a,b]$), which will be used below to show the tightness of $(u^{\alpha})_{\alpha \in [a,b]}$.

\begin{theorem}
\label{Th-uniform-C}
Assume that $\max(d-2,0)<a<b<d$. Let $p\geq 2$, $T>0$ be arbitrary and $K \subset \bR^d$ be a compact set. For any $\beta \in \big(\frac{d-a}{2},1 \big)$, $t,t' \in [0,T]$ and $x,x' \in K$, we have:
\[
\sup_{\alpha \in [a,b]} \|u^{\alpha}(t,x)-u^{\alpha}(t',x')\|_p \leq C \Big(|t-t'|^{\gamma}+|x-x'|^{1-\beta}\Big),
\]
where $\gamma=\frac{1-\beta}{2}$ for the heat equation, respectively $\gamma=1-\beta$ for the wave equation.
Here $C$ is a constant that depends on $(d,p,T,K,\beta)$.
\end{theorem}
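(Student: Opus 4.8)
The plan is to re-run the proof of Theorem~\ref{Holder-th} while tracking every constant, substituting the uniform spatial estimates of Lemma~\ref{G-lem2} for the fixed-$\alpha$ estimates used in \cite{BQS,BS17,CD08}; the entire content of the theorem is that the resulting constant no longer depends on $\alpha$. Since the statement is in $L^p(\Omega)$ whereas the natural estimates live in $L^2(\Omega)$, I would first pass through hypercontractivity: for $F$ in the $n$-th Wiener chaos one has $\|F\|_p \le (p-1)^{n/2}\|F\|_2$, so expanding the increment in chaos as $u^{\alpha}(t,x)-u^{\alpha}(t',x')=\sum_{n\ge1}I_n^{\alpha}(f_{t,x,n}-f_{t',x',n})$ and using $\|I_n^{\alpha}(g)\|_2=\sqrt{n!}\,\|\widetilde g\|_{\cH_\alpha^{\otimes n}}$, it suffices to bound $\sum_{n\ge1}(p-1)^{n/2}\sqrt{n!}\,\|\widetilde{f_{t,x,n}-f_{t',x',n}}\|_{\cH_\alpha^{\otimes n}}$ by the right-hand side, uniformly in $\alpha\in[a,b]$. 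I would then split $u^{\alpha}(t,x)-u^{\alpha}(t',x')=[u^{\alpha}(t,x)-u^{\alpha}(t',x)]+[u^{\alpha}(t',x)-u^{\alpha}(t',x')]$ and estimate the temporal and spatial increments separately.

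For the spatial increment the $x$-dependence of $f_{t,x,n}$ enters only through the phase factor of its spatial Fourier transform \eqref{def-phi-k}, so $\cF(f_{t,x,n}-f_{t,x',n})$ equals $\big(e^{-i(\xi_1+\ldots+\xi_n)\cdot x}-e^{-i(\xi_1+\ldots+\xi_n)\cdot x'}\big)\prod_{j}\cF G_{t_{j+1}-t_j}(\xi_1+\ldots+\xi_j)$. Applying the Cauchy--Schwarz inequality \eqref{A-ineq} in the spatial frequencies only, I reduce the relevant quadratic form to its diagonal in space, where the squared phase difference $|e^{-i(\xi_1+\ldots+\xi_n)\cdot(x-x')}-1|^2$ survives. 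Integrating the frequencies one at a time and estimating the variable carrying the full phase by Lemma~\ref{G-lem2}(c) (with $z=x-x'$) produces the factor $|x-x'|^{2(1-\beta)}$ with an $\alpha$-uniform constant, while the remaining frequencies are controlled by Lemma~\ref{G-lem2}(a). After the time integration and the square root this yields the exponent $1-\beta$, as claimed.

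For the temporal increment (say $t'<t$) the difference $f_{t,x,n}-f_{t',x,n}$ has two sources: the simplex changes from $T_n(t)$ to $T_n(t')$, and the leading propagator changes from $\cF G_{t-t_n}$ to $\cF G_{t'-t_n}$. I would decompose accordingly: on $\{t_n\in(t',t)\}$ the short interval is handled by Lemma~\ref{G-lem2}(a), and on $\{t_n<t'\}$ one replaces $\cF G_{t-t_n}-\cF G_{t'-t_n}$ and invokes Lemma~\ref{G-lem2}(b), whose heat bound $\propto h^{1-\beta}$ (respectively wave bound $\propto h^{2(1-\beta)}$) yields, after the square root, exactly $\gamma=\tfrac{1-\beta}{2}$ (respectively $\gamma=1-\beta$). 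The remaining frequencies are again absorbed by Lemma~\ref{G-lem2}(a) and the time variables by Lemma~\ref{beta-lem}.

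The crux is the simultaneous control of summability in $n$ and uniformity in $\alpha$. Uniformity is secured by fixing $\beta\in(\tfrac{d-a}{2},1)$ once and for all, so that the condition \eqref{beta-cond} (equivalently $\beta>\tfrac{d-\alpha}{2}$) holds for every $\alpha\in[a,b]$ and all constants furnished by Lemma~\ref{G-lem2} are $\alpha$-independent; the residual $\alpha$-dependence enters only through prefactors such as $\Gamma(\tfrac{d-\alpha}{2})$ in $k_{\alpha}$ and the Gamma-factors produced by Lemma~\ref{beta-lem}, which are continuous, hence bounded, on the compact interval $[a,b]$, while the time-gap exponent $\tfrac{2-(d-\alpha)}{2}\ge\tfrac{2-(d-a)}{2}>0$ is bounded below. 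For summability the delicate point, which I expect to be the main obstacle, is that one must \emph{not} collapse the two time simplices onto the diagonal in time as in the proof of Lemma~\ref{Ik-conv}: that reduction is harmless for a single chaos but destroys the Gamma-function denominator and leaves a non-summable series. Instead, after Cauchy--Schwarz \eqref{A-ineq} in space (which halves the power weights in the time gaps), I would retain the full temporal coupling $\prod_j\gamma_0(t_j-s_j)$ and estimate the resulting double-simplex integral as in \cite{BQS,BS17,CD08}; this produces a denominator of the form $\Gamma(\kappa_{\alpha}n+c)$ with $\kappa_{\alpha}$ bounded below on $[a,b]$, which dominates the $n!$ from the chaos norm and the $(p-1)^{n/2}$ from hypercontractivity and gives a convergent series bounded uniformly in $\alpha$. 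The spatial part is comparatively routine once Lemma~\ref{G-lem2}(c) is available.
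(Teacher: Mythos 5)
Your overall architecture --- hypercontractivity to reduce to chaos-by-chaos $L^2$ bounds, the split into temporal and spatial increments, the further split of the time increment into the contribution of the new slab $[0,t+h]^n\setminus[0,t]^n$ (the paper's $B_n^{\alpha}$) and the propagator difference on $[0,t]^n$ (the paper's $A_n^{\alpha}$), the use of Lemma~\ref{G-lem2} with a single $\beta\in(\frac{d-a}{2},1)$ fixed in advance so that \eqref{beta-cond} holds for every $\alpha\in[a,b]$, and Lemma~\ref{beta-lem} together with monotonicity of $\Gamma$ for the uniform constants --- is exactly the paper's proof of Theorem~\ref{Th-uniform-C}.

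However, the step you single out as the crux is misdiagnosed. You assert that the Cauchy--Schwarz reduction \eqref{A-ineq} to the temporal diagonal ``destroys the Gamma-function denominator and leaves a non-summable series,'' and that one must instead retain the coupling $\prod_j\gamma_0(t_j-s_j)$. This is backwards: the paper performs precisely that reduction, bounding $\int_{[0,t]^n}\prod_j\gamma_0(t_j-s_j)\,d\pmb{s_n}\le\Gamma_{0,t}^n$ with $\Gamma_{0,t}=\int_{-t}^{t}\gamma_0(s)\,ds$, which costs only a geometric factor. The denominator $\Gamma\big((n-1)(1-\frac{d-\alpha}{2})+1\big)$ that controls the series in $n$ does not come from the temporal coupling at all; it comes from Lemma~\ref{beta-lem} applied to the ordered simplex integral $\int_{T_n(t)}\prod_j(t_{j+1}-t_j)^{r}\,d\pmb{t_n}$ of the time-gap powers produced by Lemma~\ref{G-lem2}, and that integral is untouched by the diagonal reduction (the symmetrization factor $n!$ it generates cancels against the $1/n!$ in the chaos norm, and the superexponential growth of the remaining $\Gamma$ denominator absorbs $(p-1)^{n/2}$ and all geometric constants). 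The same mechanism already yields the summable tail \eqref{uniform} in Step 1 of the proof of Theorem~\ref{main-th1}. Conversely, the substitute you propose --- keeping the full double-simplex coupling and estimating it ``as in the references'' --- is not available in the generality of Section~\ref{section-regular}: the only tool in the paper for exploiting the off-diagonal temporal structure is the Littlewood--Hardy inequality \eqref{LH-ineq}, which requires $\gamma_0$ to be the fractional kernel \eqref{def-gamma0}, whereas Theorem~\ref{Th-uniform-C} assumes only \eqref{cond-mu0}. A smaller slip of the same kind: Cauchy--Schwarz followed by $ab\le\frac12(a^2+b^2)$ does not ``halve the power weights in the time gaps''; that halving occurs only in the $A(\pmb{t},\pmb{t})^{1/2}A(\pmb{s},\pmb{s})^{1/2}$ form exploited via Littlewood--Hardy in the rough case of Section~\ref{section-rough}. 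None of this invalidates the rest of your plan: if you simply keep the diagonal reduction you were trying to avoid, the argument closes exactly as in the paper.
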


\begin{proof}
We follow the same argument as in the proof of Theorem 3.2 of \cite{BQS} for the heat equation, respectively Theorem 8.3 of \cite{BS17} for the wave equation.

{\bf Step 1.} We study the time increments. Let $t \in [0,T]$ and $h>0$ be such that $t+h \in [0,T]$. By hypercontractivity,
\begin{align*}
\big\|u^{\alpha}(t+h,x)-u^{\alpha}(t,x)\big\|_p & \leq \sum_{n\geq 1}(p-1)^{n/2}\big\|I_n^{\alpha}\big( f_{t+h,x,n}-f_{t,x,n}\big)\big\|_2 \\
& \leq \sum_{n\geq 1}(p-1)^{n/2}
\left(\frac{2}{n!} \Big(A_n^{\alpha}(t,h)+B_n^{\alpha}(t,h)\Big) \right)^{1/2},
\end{align*}
where
$A_n^{\alpha}(t,h)=(n!)^2\| \widetilde{f}_{t+h,x,n}1_{[0,t]^n}-\widetilde{f}_{t,x,n}
\|_{\cH_{\alpha}^{\otimes n}}^2$ and $B_n^{\alpha}(t,h)=(n!)^2\| \widetilde{f}_{t+h,x,n}1_{[0,t+h]^n -[0,t]^n}\|_{\cH_{\alpha}^{\otimes n}}^2$.

We examine $A_n^{\alpha}(t,h)$. Note that
\[
A_n^{\alpha}(t,h) \leq \Gamma_{0,t}^n \int_{[0,t]^n} \psi_{t,h,n}^{\alpha}(\pmb{t_n})d \pmb{t_n},
\]
where the function $\psi_{t,h,n}^{\alpha}(\pmb{t_n})$ is defined as follows: if $\pmb{t_n} \in [0,t]^n$ and $\rho$ is the permutation of $1,\ldots,n$ such that $t_{\rho(1)}<\ldots<t_{\rho(n)}$, we let $u_j=t_{\rho(j+1)}-t_{\rho(j)}$ (with $t_{\rho(n+1)}=t$), and
\begin{align*}
\psi_{t,h,n}^{\alpha}(\pmb{t_n}) & : =
\int_{(\bR^d)^n} \prod_{j=1}^{n-1}\big|\cF G_{u_j}(\xi_1+\ldots+\xi_j)\big|^2 \big|\cF (G_{u_n+h}-G_{u_n})(\xi_1+\ldots+\xi_n)\big|^2 \prod_{j=1}^{n}
|\xi_j|^{-\alpha}d\pmb{\xi_n}.
\end{align*}

To find an upper bound for $\psi_{t,h,n}^{\alpha}(\pmb{t_n})$, we use Lemma \ref{G-lem2}.(a,b). For the heat equation,
\begin{equation}
\label{bound-psi-h}
\psi_{t,h,n}^{\alpha}(\pmb{t_n}) \leq C_{\beta}
h^{1-\beta} \prod_{j=1}^{n-1}k_{\alpha}(u_j) u_n^{\beta-1} k_{\alpha}(u_n/2),
\end{equation}
and hence, using the form of $k_{\alpha}$ and Lemma \ref{beta-lem}, we have:
\begin{align*}
A_n^{\alpha}(t,h) & \leq C_{\beta} h^{1-\beta} n!  \, \Gamma_{0,t}^n c_d^n \Gamma\left(\frac{d-\alpha}{2}\right)^n \int_{T_n(t)} \prod_{j=1}^{n-1}(t_{j+1}-t_j)^{\frac{\alpha-d}{2}}
(t-t_n)^{\beta-1-\frac{d-\alpha}{2}}d\pmb{t_n}\\
& \leq C_{\beta} h^{1-\beta} n!  \, \Gamma_{0,t}^n c_d^n \Gamma\left(\frac{d-\alpha}{2}\right)^n \frac{\Gamma(1-\frac{d-\alpha}{2})^{n-1}\,t^{(n-1)(1-\frac{d-\alpha}{2})}}{
\Gamma\big((n-1)(1-\frac{d-\alpha}{2})+1\big)} \cdot
\frac{t^{\beta-\frac{d-\alpha}{2}}}{\beta-\frac{d-\alpha}{2}}.
\end{align*}
This expression can be bounded uniformly in $\alpha \in [a,b]$, using monotonicity properties of the $\Gamma$ function. For the wave equation,
\begin{equation}
\label{bound-psi-w}
\psi_{t,h,n}^{\alpha}(\pmb{t_n}) \leq C^n h^{2(1-\beta)} \prod_{j=1}^{n-1}u_j^{2(1-\beta)}
\end{equation}
where $C$ is a constant that depends on $T,d,a,b,\beta$, and hence
\begin{align*}
A_n^{\alpha}(t,h) & \leq C^n h^{2(1-\beta)} n!  \, \Gamma_{0,t}^n \int_{T_n(t)} \prod_{j=1}^{n-1}(t_{j+1}-t_j)^{2(1-\beta)}
d\pmb{t_n}\\
&\leq C^n h^{2(1-\beta)} n!  \, \Gamma_{0,t}^n \frac{\Gamma(3-2\beta)^{n-1}}{\Gamma\big((n-1)(3-2\beta)+1\big)}
t^{(n-1)(3-2\beta)+1}.
\end{align*}
In summary, we obtain that:
\begin{align*}
\sup_{\alpha \in [a,b]}\sum_{n\geq 1} (p-1)^{n/2}\left(\frac{1}{n!}A_{n}^{\alpha}(t,h)\right)^{1/2}\leq
\left\{
\begin{array}{ll}
C h^{\frac{1-\beta}{2}}  & \mbox{for the heat equation} \\
 C h^{1-\beta} & \mbox{for the wave equation}
\end{array} \right.
\end{align*}
where $C$ is a constant that depends on $(p,d,a,b,T,\beta)$.

Next, we examine $B_n^{\alpha}(t,h)$. Let $D_{t,h}=[0,t+h]^n \verb2\2 [0,t]^n$. Then
\begin{align*}
B_n^{\alpha}(t,h) & \leq \Gamma_{0,t+h}^n \int_{[0,t+h]^n} \gamma_{t,h,n}^{\alpha}(\pmb{t_n})1_{D_{t,h}}(\pmb{t_n}) d\pmb{t_n},
\end{align*}
where
\[
\gamma_{t,h,n}^{\alpha}(\pmb{t_n}):=\int_{(\bR^d)^n} \prod_{j=1}^{n-1}\big|\cF G_{u_j}(\xi_1+\ldots+\xi_j)\big|^2
\big|\cF G_{u_n+h}(\xi_1+\ldots+\xi_n)\big|^2 \prod_{j=1}^{n}|\xi_j|^{-\alpha}d\pmb{\xi_n},
\]
and we define, as above, $u_j=t_{\rho(j+1)}-t_{\rho(j)}$ if $t_{\rho(1)}<\ldots<t_{\rho(n)}<t+h$ and $t_{\rho(n+1)}=t$.

To find an upper bound for $\gamma_{t,h,n}^{\alpha}(\pmb{t_n})$, we use Lemma \ref{G-lem2}.(a). For the heat equation,
\[
\gamma_{t,h,n}^{\alpha}(\pmb{t_n}) \leq \prod_{j=1}^{n-1}k_{\alpha}(u_j) k_{\alpha}(u_n+h),
\]
and hence, using the form of $k_{\alpha}$ and Lemma \ref{beta-lem}, we have:
\begin{align*}
B_n^{\alpha}(t,h)  & \leq  n! \, \Gamma_{0,T}^n c_d^n \Gamma\Big(\frac{d-\alpha}{2}\Big)^n \int_t^{t+h} \int_{T_{n-1}(t_n)} \prod_{j=1}^{n-1}(t_{j+1}-t_j)^{-\frac{d-\alpha}{2}} (t+h-t_n)^{-\frac{d-\alpha}{2}} d\pmb{t_{n-1}}dt_n\\
& \leq n! \, \Gamma_{0,T}^n c_d^n \Gamma\Big(\frac{d-\alpha}{2}\Big)^n \frac{\Gamma(1-\frac{d-\alpha}{2})^{n-1} \, t^{(n-1)(1-\frac{d-\alpha}{2})}}{\Gamma
\big((n-1)(1-\frac{d-\alpha}{2})+1\big)} \cdot
\frac{h^{1-\frac{d-\alpha}{2}}}{1-\frac{d-\alpha}{2}}.
\end{align*}
It is clear that the previous expression can be bounded uniformly in $\alpha \in [a,b]$. We also note that $h^{1-\frac{d-\alpha}{2}} \leq C_T h^{1-\beta}$ since $\beta>\frac{d-a}{2}>\frac{d-\alpha}{2}$.
For the wave equation,
\[
\gamma_{t,h,n}^{\alpha}(\pmb{t_n}) \leq C^n \prod_{j=1}^{n-1}u_j^{2(1-\beta)} (u_n+h)^{2(1-\beta)},
\]
where $C$ is a constant that depends on $(T,d,a,b,\beta)$, and hence
\begin{align*}
B_{n}^{\alpha}(t,h) & \leq n! \Gamma_{0,T}^n C^n \int_t^{t+h} \int_{T_{n-1}(t_n)} \prod_{j=1}^{n-1}(t_{j+1}-t_j)^{2(1-\beta)} (t+h-t_n)^{2(1-\beta)} d\pmb{t_{n-1}}dt_n\\
&\leq  n! \Gamma_{0,T}^n C^n \frac{\Gamma(3-2\beta)^{n-1} \, t^{(n-1)(3-2\beta)}}{\Gamma\big((n-1)(3-2\beta)+1\big)}
\cdot \frac{h^{3-2\beta}}{3-2\beta}.
\end{align*}
In summary, we have:
\begin{align*}
\sup_{\alpha \in [a,b]}\sum_{n\geq 1} (p-1)^{n/2}\left(\frac{1}{n!}B_{n}^{\alpha}(t,h)\right)^{1/2}\leq
\left\{
\begin{array}{ll}
C h^{\frac{1-\beta}{2}}  & \mbox{for the heat equation} \\
C h^{(3-2\beta)/2} & \mbox{for the wave equation}
\end{array} \right.
\end{align*}

\medskip

{\bf Step 2.} We study the space increments. By hypercontractivity,
\[
\|u^{\alpha}(t,x+z)-u^{\alpha}(t,x)\|_p \leq \sum_{n\geq 1}(p-1)^{n/2} \|I_{n}^{\alpha}(f_{t,x+z,n}-f_{t,x,n})\|_p
\leq \sum_{n\geq 1}(p-1)^{n/2} \left( \frac{1}{n!} C_{n}^{\alpha}(t,z)\right)^{1/2},
\]
where $C_{n}^{\alpha}(t,z)=(n!)^2 \| \widetilde{f}_{t,x+z,n}-\widetilde{f}_{t,x,n}
\|_{\cH_{\alpha}^{\otimes n}}^{2}$. Note that
\begin{align*}
C_{n}^{\alpha}(t,z) & \leq \Gamma_{0,t}^n \int_{[0,t]^n}\psi_{t,z,n}^{\alpha}(\pmb{t_n}) d\pmb{t_n},
\end{align*}
where
\[
\psi_{t,z,n}^{\alpha}(\pmb{t_n}) =\int_{(\bR^d)^n}
\prod_{j=1}^{n}|\cF G_{u_j}(\xi_1+\ldots+\xi_j)|^2 |1-e^{-i (\xi_1+\ldots+\xi_n)\cdot z}|^2 \prod_{j=1}^{n}|\xi_j|^{-\alpha}d\pmb{\xi_n},
\]
and we define, as above, $u_j=t_{\rho(j+1)}-t_{\rho(j)}$ if $t_{\rho(1)}<\ldots<t_{\rho(n)}<t+h$ and $t_{\rho(n+1)}=t$.

To find an upper bound for $\psi_{t,z,n}^{(\alpha)}(\pmb{t_n})$, we use Lemma \ref{G-lem2}.(a,c). For the heat equation,
\begin{align*}
\psi_{t,z,n}^{\alpha}(\pmb{t_n}) \leq C_{\beta} |z|^{2(1-\beta)} \prod_{j=1}^{n-1} k_{\alpha}(u_j) u_n^{\beta-1} k_{\alpha}(u_n/2),
\end{align*}
which is similar to \eqref{bound-psi-h} above. For the wave equation,
\begin{align*}
\psi_{t,z,n}^{\alpha}(\pmb{t_n}) \leq C^n |z|^{2(1-\beta)}\prod_{j=1}^{n-1}u_j^{2(1-\beta)},
\end{align*}
with a constant $C>0$ that depends on $(T,K,d,a,b)$, which is similar to \eqref{bound-psi-w} above. We conclude that for both heat and wave equations,
\begin{align*}
\sup_{\alpha \in [a,b]}\sum_{n\geq 1} (p-1)^{n/2}\left(\frac{1}{n!}C_{n}^{\alpha}(t,h)\right)^{1/2}\leq
C |z|^{1-\beta}.
\end{align*}
\end{proof}

\medskip

{\bf Proof of Theorem \ref{main-th1}}:

 {\em Step 1. (finite-dimensional convergence)} In this step, we prove that:
\[
\big(u^{\alpha_n}(t_1,x_1),\ldots,u^{\alpha_n}(t_k,x_k)\big)
\stackrel{d}{\to}
\big(u^{\alpha^*}(t_1,x_1),\ldots,u^{\alpha^*}(t_k,x_k)\big),
\]
for any $(t_1,x_1),\ldots,
(t_k,x_k)\in [0,T] \times \bR^d$. It is enough to prove that for any $(t,x) \in [0,T] \times \bR^d$, $u^{\alpha_n}(t,x) \to u^{\alpha^*}(t,x)$ in $L^2(\Omega)$ as $n\to \infty$.
To do this, we approximate $u^{\alpha}(t,x)$ by the partial sum:
\[
u_m^{\alpha}(t,x)=1+\sum_{k=1}^{m}I_k^{\alpha}(f_{t,x,k}).
\]
Note that, in the case of the white noise in time considered in \cite{GJQ},
$(u_m)_{m\geq 0}$ coincides with the Picard iteration sequence.
Our argument is summarized by the diagram below:
\begin{equation*}
\begin{tikzcd}[column sep=large, row sep=large,remember picture]
u^{\alpha_n}_m(t,x) \arrow[r, "n \to \infty", "\forall \, m"',] \arrow[d, "m \to \infty", "\text{uniformly in } n"'] & u^{\alpha^\ast}_m(t,x) \arrow[d, "m \to \infty"]\\
u^{\alpha_n}(t,x) \arrow[r, dashrightarrow] & u^{\alpha^*}(t,x)
\end{tikzcd}
\end{equation*}


The convergence on the top line of the diagram holds by Lemma \ref{Ik-conv}: for any $m\geq 1$,
\begin{align*}
\bE|u_m^{\alpha_n}(t,x)-u_m^{\alpha^*}(t,x)|^2 \leq m\sum_{k=1}^{m}
\bE|I_{k}^{\alpha_n}(f_{t,x,k})-I_{k}^{\alpha^*}(f_{t,x,k})|^2 \quad \mbox{as $n\to \infty$}.
\end{align*}
The convergence indicated by the right vertical arrow holds trivially since by construction,
$u_m^{\alpha}(t,x) \to u^{\alpha}(t,x)$ in $L^2(\Omega)$ as $m \to \infty$, for any $\alpha \in \big(\max(d-2,0),d\big)$.

It remains to prove the uniform convergence in $n$. For this, we
fix some values $a$ and $b$ such that $\max(d-2,0)<a<\alpha^*<b<d$. Since $\alpha_n \to \alpha^*$, there exists $N \in \bN$ such that $a<\alpha_n<b$ for any $n\geq N$. Therefore, it suffices to prove that:
\begin{equation}
\label{uniform}
\sup_{\alpha \in [a,b]}\bE|u_m^{\alpha}(t,x)-u^{\alpha}(t,x)|^2
=\sup_{\alpha \in [a,b]} \sum_{k\geq m+1}\bE|I_k^{\alpha}(f_{t,x,k})|^2
\to 0 \quad \mbox{as $m \to \infty$.}
\end{equation}

Note that
\[
\bE|I_k^{\alpha}(f_{t,x,k})|^2 =k! \|\widetilde{f}_{t,x,k} \|_{\cH_{\alpha}^{\otimes k}}^{2} \leq  k! \Gamma_{0,t}^k
\int_{[0,t]^n}A_k^{\alpha}(\pmb{t_k})d\pmb{t_k},
\]
where $\Gamma_{0,t}=\int_{-t}^t \gamma_0(s)ds$ and
$
A_{k}^{\alpha}(\pmb{t_k})=\int_{(\bR^d)^k}
\big|\cF \widetilde{f}_{t,x,k}(\pmb{t_k},\bullet)(\pmb{\xi_k})\big|^2 \prod_{j=1}^k |\xi_j|^{-\alpha}d\pmb{\xi_k}$.

If $\pmb{t_k}=(t_1,\ldots,t_k) \in [0,t]^k$ is arbitrary and  $\rho$ is a permutation of $1,\ldots,k$ such that $t_{\rho(1)}<\ldots<t_{\rho(k)}$, with $t_{\rho(k+1)}=t$, then by Lemma \ref{G-lemma}, we have:
\begin{align*}
A_{k}^{\alpha}(\pmb{t_k})&=\frac{1}{(k!)^2}
\int_{(\bR^d)^k} \prod_{j=1}^{k}\big|\cF G_{t_{\rho(j+1)}-t_{\rho(j)}}\big(\sum_{i=1}^j \xi_i\big)\big|^2 \,|\xi_j|^{-\alpha}d\pmb{\xi_k} \leq \frac{K_{d,\alpha}^k }{(k!)^2} \prod_{j=1}^{k}(t_{\rho(j+1)}-t_{\rho(j)})^{r_{\alpha}},
\end{align*}
where $K_{d,\alpha}$ and $r_{\alpha}$ are given by \eqref{def-K}, respectively \eqref{def-r}.
Using Lemma \ref{beta-lem}, we obtain:
\begin{align*}
\bE|I_k^{\alpha}(f_{t,x,k})|^2 & \leq \Gamma_{0,t}^k K_{d,\alpha}^k  \int_{T_k(t)} \prod_{j=1}^{k}(t_{j+1}-t_j)^{r_{\alpha}} d\pmb{t_k}=\Gamma_{0,t}^k K_{d,\alpha}^k \frac{\Gamma(r_{\alpha}+1)^k \, t^{k(r_{\alpha}+1)}}{\Gamma\big(k(r_{\alpha}+1)+1\big)}.
\end{align*}
Our next task is to find a bound for the last expression, uniformly in $\alpha \in [a,b]$.

First, note that $K_{d,\alpha}\leq c_d \left(\frac{1}{d-b} +\frac{1}{2-(d-a)} \right)=:K_{d,a,b}$. Moreover,
recall that there exists a value $x_0 \in (1,2)$ such that $\Gamma$ is decreasing on $(0,x_0)$ and increasing on $(x_0,\infty)$.

In the case of the heat equation, $r_{\alpha}=-\frac{d-\alpha}{2}$, for any $\alpha \in [a,b]$, $\Gamma(r_{\alpha}+1)\leq \Gamma(1-\frac{d-a}{2})$, $t^{k(r_{\alpha}+1)} \leq (t \vee 1)^{k(1-\frac{d-b}{2})}$,
$$ \Gamma\big(k(r_{\alpha}+1)+1\big) \geq \Gamma\Big(k\Big(1-\frac{d-a}{2}\Big)+1\Big) \quad \mbox{for any} \ k \geq m_0,$$ where $m_0$ is chosen such that $m_0(1-\frac{d-a}{2})>x_0$, and therefore for any $m\geq m_0$,
\begin{equation}
\label{sum-m-heat}
\sup_{\alpha \in [a,b]}\sum_{k\geq m+1}\bE|I_k^{\alpha}(f_{t,x,k})|^2 \leq \sum_{k\geq m+1}
\Gamma_{0,t}^k K_{d,a,b}^k \frac{\Gamma(1-\frac{d-a}{2})^k \, (t \vee 1)^{k(1-\frac{d-b}{2})}}{\Gamma\Big(k\Big(1-\frac{d-a}{2}\Big)+1\Big)}.
\end{equation}

In the case of the wave equation, $r_{\alpha}=2-d+\alpha$, for any $\alpha \in [a,b]$, $\Gamma(r_{\alpha}+1)\leq \Gamma(3)=2$, $t^{k(r_{\alpha}+1)} \leq (t \vee 1)^{k(3-d+b)}$,
$$ \Gamma\big(k(r_{\alpha}+1)+1\big) \geq \Gamma\big(k(3-d+a)+1\big) \quad \mbox{for any} \ k \geq m_1,$$ where $m_1$ is chosen such that $m_1(3-d+a)>x_0$, and therefore for any $m\geq m_1$,
\begin{equation}
\label{sum-m-wave}
\sup_{\alpha \in [a,b]}\sum_{k\geq m+1}\bE|I_k^{\alpha}(f_{t,x,k})|^2 \leq \sum_{k\geq m+1}
\Gamma_{0,t}^k K_{d,a,b}^k \frac{2^k \, (t \vee 1)^{k(3-d+b)}}{\Gamma\big(k(3-d+a))+1\big)}.
\end{equation}
Relation \eqref{uniform} follows, since the series appearing in
\eqref{sum-m-heat} and \eqref{sum-m-wave} are clearly convergent.

\medskip

{\em Step 2. (tightness)}
The fact that the sequence $(u^{\alpha_n})_{n\geq 1}$ is tight in $C([0,T] \times \bR^d)$ follows by Proposition 2.3 of \cite{yor}, using Theorem \ref{Th-uniform-C} above.

\section{Rough Noise}
\label{section-rough}

In this section, we consider the case when the measure $\mu$ is given by:
\[
\mu(d\xi)=c_H|\xi|^{1-2H}d\xi \quad \mbox{for some $H \in (0,1/2)$}
\]
In addition, we assume that the temporal covariance function $\gamma_0$ is given by \eqref{def-gamma0}.

To emphasize the dependence on the parameter $H$, we denote the noise, the Hilbert space, and the solution, by $W^{H},\cH^{H},u^{H}$, respectively. The multiple integral of order $n$ with respect to $W^{H}$ will be denoted by $I_n^{H}$.
With this notation, the series expansion \eqref{series} becomes:
\[
u^H(t,x)=1+\sum_{k\geq 1}I_{n}^H(f_{t,x,k}).
\]

Similarly to Section \ref{section-regular},
we begin with a construction of all Gaussian processes $(W^{H})_{H \in (0,1/2)}$ on the same probability space.
Let $\widehat{W}$ be the $\bC$-valued Gaussian random measure given by \eqref{def-W-hat}. For any $\varphi \in \cS_{\bC}(\bR_{+}\times \bR)$, let
\[
W^{H}(\varphi)=\sqrt{c_{H_0} c_H}\int_{\bR^2}
\cF \varphi (\tau,\xi)\tau^{\frac{1}{2}-H_0}
|\xi|^{\frac{1}{2}-H}\widehat{W}(d\tau,d\xi),
\]
where $\cF \varphi (\tau,\xi)$ denotes the Fourier transform in both variables $(t,x)$. We will use the notation
$W^H(\varphi)=\int_0^{\infty}\int_{\bR}\varphi(t,x) W^H(dt,dx)$.

By the isometry property \eqref{iso-wide-W}, it follows that
\[
\bE[W^{H}(\varphi) W^{H}(\psi)]=c_{H_0}c_H\int_{\bR \times \bR}
\cF \varphi(\tau,\xi) \overline{\cF \psi(\tau,\xi) } |\tau|^{1-2H_0}|\xi|^{1-2H}d\tau d\xi=\langle \varphi,\psi \rangle_{\cH_{H}}.
\]
This shows that $W^H$ has the desired covariance function \eqref{cov}, and all processes $(W^H)_{H \in (0,1/2)}$ are defined on the same probability space. For any function $\varphi$ for which the stochastic integral is well-defined, we have:
\[
\int_0^{\infty} \int_{\bR} \varphi(t,x)W^H(dt,dx)=\sqrt{c_{H_0} c_H}\int_{\bR^2}
\cF_t [\cF_x \varphi(t,\cdot)(\xi)](\tau)|\tau|^{\frac{1}{2}-H_0}|\xi|^{\frac{1}{2}-H}
\widehat{W}(d\tau,d\xi).
\]
This representation can be extended to multiple integrals with respect to $W^{H}$:
\begin{equation}
\label{In-H}
I_k^{H}(\varphi)=c_{H_0}^{k/2} c_H^{k/2}\int_{(\bR \times \bR)^k} \cF_t[\cF_x \varphi(\pmb{t_k},\bullet)(\pmb{\xi_k})](\pmb{\tau_k})
\prod_{j=1}^{k}|\tau_j|^{\frac{1}{2}-H_0}|\xi_j|^{\frac{1}{2}-H}
\widehat{W}(d\tau_1,d\xi_1) \ldots \widehat{W}(d\tau_k,d\xi_k).
\end{equation}

We will use the following lemma, which can be found for instance in \cite{BJQ}.

\begin{lemma}
\label{rough-int}
For any $t>0$,
\[
\int_{\bR} e^{-t|\xi|^2}|\xi|^{\alpha}d\xi=\Gamma\left(
\frac{1+\alpha}{2}\right)t^{-\frac{1+\alpha}{2}} \quad \mbox{for any $\alpha>-1$}
\]

\[
\int_{\bR}\frac{\sin^2(t|\xi|)}{|\xi|^2}|\xi|^{\alpha}d\xi=
2^{1-\alpha}\widetilde{C}_{\alpha}t^{1-\alpha} \quad
\mbox{for any $\alpha\in (-1,1)$},
\]
where
\[
\widetilde{C}_{\alpha}=
\left\{
\begin{array}{ll}
(1-\alpha)^{-1} \Gamma(\alpha)\sin(\pi \alpha/2) & \mbox{if $\alpha \in (0,1)$} \\
\alpha^{-1}(1-\alpha)^{-1}\Gamma(1+\alpha)\sin(\pi \alpha/2) & \mbox{if $\alpha \in (-1,0)$}\\
\pi/2 & \mbox{if $\alpha=0$}
\end{array} \right.
\]
\end{lemma}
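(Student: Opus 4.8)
The plan is to treat the two identities separately, each reduced by evenness of the integrand to an integral over $(0,\infty)$ and then to a standard special-function evaluation.

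For the first identity I would write $\int_{\bR}e^{-t|\xi|^2}|\xi|^{\alpha}d\xi=2\int_0^{\infty}e^{-t\xi^2}\xi^{\alpha}d\xi$ and substitute $u=t\xi^2$. This turns the integral into $t^{-(1+\alpha)/2}\int_0^{\infty}e^{-u}u^{(\alpha-1)/2}du$, and the remaining integral is $\Gamma\big(\tfrac{\alpha+1}{2}\big)$ by the definition of the Gamma function. The hypothesis $\alpha>-1$ is exactly what makes the Gamma argument positive (hence the integrand integrable near $u=0$), while convergence at infinity is automatic from the Gaussian factor. This step is routine.

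For the second identity I would first extract the $t$-dependence by scaling. Writing the integral as $2\int_0^{\infty}\sin^2(t\xi)\,\xi^{\alpha-2}d\xi$ and substituting $\eta=t\xi$ produces $2t^{1-\alpha}J$, where $J=\int_0^{\infty}\eta^{\alpha-2}\sin^2\eta\,d\eta$, so the exponent $1-\alpha$ appears automatically and it remains to identify the constant $J$. Note that $J$ converges precisely for $\alpha\in(-1,1)$: near $0$ the factor $\sin^2\eta\sim\eta^2$ forces the integrand to behave like $\eta^{\alpha}$ (integrable iff $\alpha>-1$), and at infinity the bounded oscillating factor against $\eta^{\alpha-2}$ is integrable iff $\alpha<1$. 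To compute $J$ I would use $\sin^2\eta=\tfrac12(1-\cos 2\eta)$ and integrate by parts once, taking $\eta^{\alpha-1}/(\alpha-1)$ as antiderivative of $\eta^{\alpha-2}$; the boundary terms vanish at both endpoints for $\alpha\in(-1,1)$ (at $0$ because $\eta^{\alpha-1}(1-\cos 2\eta)\sim 2\eta^{\alpha+1}\to 0$, at infinity because $\eta^{\alpha-1}\to 0$). This reduces $J$ to $\tfrac{1}{1-\alpha}\int_0^{\infty}\eta^{\alpha-1}\sin(2\eta)\,d\eta$, and a substitution $y=2\eta$ leaves the classical sine--Mellin integral $\int_0^{\infty}y^{\alpha-1}\sin y\,dy=\Gamma(\alpha)\sin(\pi\alpha/2)$, valid for $\alpha\in(-1,1)\setminus\{0\}$. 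Assembling the pieces gives $\int_{\bR}\sin^2(t|\xi|)|\xi|^{\alpha-2}d\xi=2^{1-\alpha}(1-\alpha)^{-1}\Gamma(\alpha)\sin(\pi\alpha/2)\,t^{1-\alpha}$, so it remains only to verify that $(1-\alpha)^{-1}\Gamma(\alpha)\sin(\pi\alpha/2)$ equals the piecewise constant $\widetilde{C}_{\alpha}$. For $\alpha\in(0,1)$ this is the stated expression verbatim; for $\alpha\in(-1,0)$ I would apply $\Gamma(\alpha)=\Gamma(1+\alpha)/\alpha$ to rewrite it as $\alpha^{-1}(1-\alpha)^{-1}\Gamma(1+\alpha)\sin(\pi\alpha/2)$, which recasts $\Gamma$ at the positive argument $1+\alpha$. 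The value $\alpha=0$ is a removable singularity: since $\Gamma(\alpha)\sim 1/\alpha$ and $\sin(\pi\alpha/2)\sim\pi\alpha/2$ as $\alpha\to 0$, the constant tends to $\pi/2$, consistent with the direct evaluation $\int_0^{\infty}\eta^{-2}\sin^2\eta\,d\eta=\pi/2$.

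I expect the main obstacle to be the rigorous justification of the sine--Mellin evaluation $\int_0^{\infty}y^{\alpha-1}\sin y\,dy=\Gamma(\alpha)\sin(\pi\alpha/2)$, since this integral is only conditionally convergent at infinity and one must make sense of $\Gamma(\alpha)$ for $\alpha\in(-1,0)$. The cleanest route is to establish the formula for $\alpha\in(0,1)$ by a contour-rotation (or Abel-regularization) argument and then extend it to $\alpha\in(-1,0)$ by analytic continuation, which is equivalent to the integration-by-parts reduction carried out above. Matching the piecewise definition of $\widetilde{C}_{\alpha}$ and treating the $\alpha=0$ endpoint by continuity are then routine bookkeeping with the functional equation of $\Gamma$.
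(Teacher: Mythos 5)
Your proof is correct. Note that the paper does not actually prove this lemma --- it simply cites \cite{BJQ} for it --- so there is no in-paper argument to compare against; your derivation (evenness plus the substitution $u=t\xi^2$ for the Gaussian integral, and scaling, the half-angle identity, one integration by parts, and the Mellin transform $\int_0^\infty y^{\alpha-1}\sin y\,dy=\Gamma(\alpha)\sin(\pi\alpha/2)$ for the wave kernel) is the standard route and reproduces the constants exactly, including the reduction $\Gamma(\alpha)=\Gamma(1+\alpha)/\alpha$ for $\alpha\in(-1,0)$ and the removable singularity giving $\widetilde{C}_0=\pi/2$. The only point deserving care, which you correctly flag, is the justification of the sine--Mellin formula outside $(0,1)$; either analytic continuation in $\alpha$ or the observation that for $\alpha\le 0$ the integral $\int_0^\infty\eta^{\alpha-1}\sin(2\eta)\,d\eta$ is absolutely convergent at infinity settles it.
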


We will use a similar approximation technique as in Section \ref{section-regular}. More precisely, we study first the $L^2(\Omega)$-continuity in $H$ of the multiple integral of $I_k^H(f_{t,x,k})$ for fixed $k$.
This is achieved by the following lemma.

\begin{lemma}
\label{rough-conv-Ik}
Let $\ell$ be given by \eqref{def-ell}.
If $H_n \to H^* \in (\ell,1/2)$, then
\[
Q_n:=\bE|I_k^{H_n}(f_{t,x,k})-I_k^{H^*}(f_{t,x,k})|^2 \to 0, \quad \mbox{as $n\to \infty$},
\]
for any $t>0$, $x \in \bR$ and $k\geq 1$.
\end{lemma}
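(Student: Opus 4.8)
The plan is to reproduce the argument of Lemma~\ref{Ik-conv}, with the regular spectral weight $\prod_j|\xi_j|^{-\alpha/2}$ replaced by the rough weight $c_H^{k/2}\prod_j|\xi_j|^{1/2-H}$ of \eqref{In-H}, and then to invoke the Dominated Convergence Theorem. First I would write the difference $I_k^{H_n}(f_{t,x,k})-I_k^{H^*}(f_{t,x,k})$ as a single $k$-fold integral against $\widehat{W}$ with kernel
\[
H_k^{(n)}(\pmb{\tau_k},\pmb{\xi_k})=\cF\phi_{\pmb{\xi_k}}(\pmb{\tau_k})\prod_{j=1}^k\sqrt{g_0(\tau_j)}\,\Big(c_{H_n}^{k/2}\prod_{j=1}^k|\xi_j|^{\frac12-H_n}-c_{H^*}^{k/2}\prod_{j=1}^k|\xi_j|^{\frac12-H^*}\Big),
\]
where $\phi_{\pmb{\xi_k}}$ is given by \eqref{def-phi-k}. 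Then $Q_n=k!\,\|\widetilde{H}_k^{(n)}\|^2\le k!\,\|H_k^{(n)}\|_{L^2_{\bC}((\bR\times\bR)^k)}^2$, and integrating out the time-frequency variables via \eqref{phi-high-dim} rewrites this as
\[
Q_n\le k!\int_{T_k(t)}\int_{T_k(t)}\prod_{j=1}^k\gamma_0(t_j-s_j)\,A_k^{(n)}(\pmb{t_k},\pmb{s_k})\,d\pmb{t_k}\,d\pmb{s_k},
\]
with $A_k^{(n)}(\pmb{t_k},\pmb{s_k})=\int_{\bR^k}\phi_{\pmb{\xi_k}}(\pmb{t_k})\overline{\phi_{\pmb{\xi_k}}(\pmb{s_k})}\big|c_{H_n}^{k/2}\prod_j|\xi_j|^{\frac12-H_n}-c_{H^*}^{k/2}\prod_j|\xi_j|^{\frac12-H^*}\big|^2 d\pmb{\xi_k}$. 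The integrand tends to $0$ pointwise as $n\to\infty$, since $c_{H_n}\to c_{H^*}$ and $|\xi_j|^{1/2-H_n}\to|\xi_j|^{1/2-H^*}$.

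The next step is a dominating bound that is uniform in $n$. Fixing $\ell<H_-<H^*<H_+<1/2$, there is $N$ with $H_n\in[H_-,H_+]$ for $n\ge N$; since for each $\xi$ the map $p\mapsto|\xi|^p$ is monotone, its maximum over $[1-2H_+,1-2H_-]$ is attained at an endpoint, whence $|\xi|^{1-2H_n}\le|\xi|^{1-2H_-}+|\xi|^{1-2H_+}=:w(\xi)$, together with $c_{H_n}\le C$. Expanding the square then dominates the $n$-dependent part by $C^k\prod_j w(\xi_j)$. The structural difference from Lemma~\ref{Ik-conv} is that $w$ has \emph{no} singularity at the origin (the exponents $1-2H_\pm$ are positive); the only issue is the polynomial growth of $w$ at infinity, which has to be absorbed by the decay of $|\cF G|^2$.

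For the wave equation I would finish exactly as in Lemma~\ref{Ik-conv}. The Cauchy--Schwarz diagonal reduction $A_k^{(n)}(\pmb{t_k},\pmb{s_k})\le\tfrac12\big(A_k^{(n)}(\pmb{t_k},\pmb{t_k})+A_k^{(n)}(\pmb{s_k},\pmb{s_k})\big)$ together with $\int_{T_k(t)}\prod_j\gamma_0(t_j-s_j)\,d\pmb{s_k}\le\Gamma_{0,t}^k$ reduces the problem to dominating $\int_{T_k(t)}A_k^{(n)}(\pmb{t_k},\pmb{t_k})\,d\pmb{t_k}$. Passing to the partial sums $\zeta_j=\xi_1+\ldots+\xi_j$ turns $\prod_j w(\xi_j)$ into $\prod_j w(\zeta_j-\zeta_{j-1})$, and the subadditivity $|\zeta_j-\zeta_{j-1}|^{1-2H_\pm}\le|\zeta_j|^{1-2H_\pm}+|\zeta_{j-1}|^{1-2H_\pm}$ (valid as $1-2H_\pm\in(0,1)$) decouples the spatial integral into one-dimensional integrals computed by Lemma~\ref{rough-int}, followed by Lemma~\ref{beta-lem} on the simplex. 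Every spatial exponent so produced is at most $2(1-2H_-)$, which is $<1$ precisely because $H_->1/4=\ell$; this keeps each one-dimensional integral inside the range $(-1,1)$ required by Lemma~\ref{rough-int} and keeps the simplex exponents above $-1$, so the dominating integral is finite and Dominated Convergence gives $Q_n\to0$.

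The heat equation is the genuine obstacle, because here the diagonal reduction is too crude: replacing $\gamma_0$ by $\Gamma_{0,t}^k$ discards the temporal covariance and, with the top exponent $2(1-2H_-)$, forces the simplex condition $-\tfrac12(1+2(1-2H_-))>-1$, i.e.\ $H_->1/4$, which is strictly stronger than the admissible $H_->\ell=3/4-H_0$. I would therefore retain the temporal covariance and apply Dominated Convergence directly to the double time integral: the integrand tends to $0$ pointwise and is dominated by $\prod_j\gamma_0(t_j-s_j)\,\overline{A}_k(\pmb{t_k},\pmb{s_k})$, where $\overline{A}_k$ is the $w$-weighted version of $A_k^{(n)}$. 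The finiteness of $\int_{T_k(t)^2}\prod_j\gamma_0(t_j-s_j)\,\overline{A}_k\,d\pmb{t_k}\,d\pmb{s_k}$ is precisely the sharp space-time estimate behind the existence of the Skorohod solution of the rough heat equation under $H_0+H>3/4$ (cf.\ \cite{hu-le19}), which exploits $\gamma_0$ through the Hardy--Littlewood--Sobolev inequality instead of bounding it by $\Gamma_{0,t}^k$; it applies because $H_->\ell$ guarantees $H_0+H_->3/4$. Carrying out this estimate uniformly for the two-piece weight $w$ is the step I expect to demand the most care.
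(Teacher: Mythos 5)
Your proposal is correct and follows the same overall mechanism as the paper, with two organizational differences worth noting. For the wave equation, the paper performs the same diagonal reduction via \eqref{A-ineq} and the crude bound $\prod_j\gamma_0(t_j-s_j)\le\Gamma_{0,t}^k$, but then simply cites the proof of Theorem~4.1 of \cite{GJQ} for the convergence of the resulting white-noise-in-time integral; you instead carry that convergence out directly (change of variables, subadditivity as in \eqref{prod-ineq}, Lemma~\ref{rough-int}, Lemma~\ref{beta-lem}), which is more self-contained and correctly isolates $H_->1/4=\ell$ as the condition keeping the exponents in the admissible ranges. For the heat equation, you correctly diagnose the key obstacle --- that discarding the temporal covariance forces $H>1/4$ rather than $H>3/4-H_0$ --- and you name the right remedy; the paper implements it as follows: Cauchy--Schwarz off the diagonal, $A_k^{(n)}(\pmb{t_k},\pmb{s_k})\le A_k^{(n)}(\pmb{t_k},\pmb{t_k})^{1/2}A_k^{(n)}(\pmb{s_k},\pmb{s_k})^{1/2}$, followed by the Littlewood--Hardy inequality \eqref{LH-ineq} applied in the time variables, giving
\[
Q_n\le k!\,b_{H_0}^k\Big(\int_{T_k(t)}A_k^{(n)}(\pmb{t_k},\pmb{t_k})^{\frac{1}{2H_0}}\,d\pmb{t_k}\Big)^{2H_0},
\]
and then Dominated Convergence is applied to this \emph{single} simplex integral, with the dominating function $h_k(\pmb{t_k})=c_{a,t}^{k/(2H_0)}\prod_j(t_{j+1}-t_j)^{-\frac{3-4a}{4H_0}}$ integrable precisely because $\frac{3-4a}{4H_0}<1$, i.e.\ $a>\tfrac34-H_0=\ell$. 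Your variant --- Dominated Convergence on the double time integral, with integrability of the $\gamma_0$-weighted dominating function checked afterwards by the Hardy--Littlewood machinery of \cite{hu-le19} --- is equivalent in substance (the finiteness check reduces to the same Littlewood--Hardy estimate on the diagonal), but it is the one step you leave as a sketch, whereas the paper executes it explicitly via the elementary bounds $I(0),I(1),I(2)$ on the one-dimensional Gaussian integrals. What the paper's ordering buys is that the Dominated Convergence argument only ever has to be run on the diagonal, so no separate pointwise-convergence argument for the off-diagonal $A_k^{(n)}(\pmb{t_k},\pmb{s_k})$ is needed.
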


\begin{proof}
Similarly to \eqref{est-Qn}, we have:
\[
Q_n \leq k!\alpha_{H_0}^k \int_{T_k(t)} \int_{T_k(t)} \prod_{j=1}^k |t_j-s_j|^{2H_0-2}A_k^{(n)}(\pmb{t}_k,\pmb{s_k})
d\pmb{t}_k d\pmb{s_k},
\]
where
\[
A_k^{(n)}(\pmb{t}_k,\pmb{s_k})=\int_{\bR^k}
\phi_{\pmb{\xi_k}}(\pmb{t_k})
\phi_{\pmb{\xi_k}}(\pmb{s_k}) \left|c_{H_n}^{k/2}\prod_{j=1}^k|\xi_j|^{\frac{1}{2}-H_n}-
c_{H^*}^{k/2}\prod_{j=1}^k|\xi_j|^{\frac{1}{2}-H^*}\right|^2 d\pmb{\xi_k}.
\]
and $\phi_{\pmb{\xi_k}}(\pmb{t_k})$ is given by \eqref{def-phi-k}. We treat separately the heat and wave equation.

\medskip

{\bf Case 1. (wave equation)} In this case, we reduce our problem to the white noise in time. More precisely, using the inequality \eqref{A-ineq}, we have
\begin{align*}
Q_n & \leq k! \Gamma_{0,t}^k\int_{T_k(t)} A_k^{(n)}(\pmb{t_k},\pmb{t_k}) d \pmb{t_k} \\
& = k! \Gamma_{0,t}^k \int_{T_k(t)} \int_{\bR^k} \prod_{j=1}^k \frac{\sin^2((t_{j+1}-t_j)|\xi_1+\ldots+\xi_j|)}{|\xi_1+\ldots+\xi_j|^2} \left|c_{H_n}^{k/2}\prod_{j=1}^k|\xi_j|^{\frac{1}{2}-H_n}-
c_{H^*}^{k/2} \prod_{j=1}^k|\xi_j|^{\frac{1}{2}-H^*}\right|^2
  d\pmb{\xi_k} d \pmb{t_k}
\end{align*}
where $\Gamma_{0,t}=2H_0 t^{2H_0-1}$. The proof of Theorem 4.1 of \cite{GJQ} shows that the last integral converges to $0$ as $n\to \infty$.

\medskip

{\bf Case 2. (heat equation)} In this case, we still use the Cauchy-Schwarz inequality
$A_k^{(n)}(\pmb{t}_k,\pmb{s_k}) \leq A_k^{(n)}(\pmb{t}_k,\pmb{t_k})^{1/2} A_k^{(n)}(\pmb{s}_k,\pmb{s_k})^{1/2}$,
but we combine it with the following inequality, which is a consequence of the Littlewood-Hardy inequality: for any $\varphi \in L^{1/H_0}(\bR_{+}^k)$,
\begin{equation}
\label{LH-ineq}
\alpha_{H_0}^k \int_{\bR_{+}^k} \int_{\bR_{+}^k}
\prod_{j=1}^{k}|t_j-s_j|^{2H_0-2} \varphi(\pmb{t_k})
\varphi(\pmb{s_k}) d\pmb{t_k}d\pmb{s_k} \leq b_{H_0}^k\left(
\int_{\bR_{+}^k}|\varphi(\pmb{t_k})|^{1/H_0} d\pmb{t_k} \right)^{2H_0},
\end{equation}
where $b_{H_0}>0$ depends only on $H_0$. Hence,
$
Q_n \leq k! \, b_{H_0}^k \left(\int_{T_k(t)} A_k^{(n)}(\pmb{t}_k,\pmb{t_k})^{\frac{1}{2H_0}}d\pmb{t_k}\right)^{2H_0}.
$
Therefore, it suffices to show that:
\begin{equation}
\label{conv-A-H0}
\int_{T_k(t)} A_k^{(n)}(\pmb{t}_k,\pmb{t_k})^{\frac{1}{2H_0}}d\pmb{t_k}\to 0, \quad \mbox{as $n\to \infty$}.
\end{equation}

For this, we apply the Dominated Convergence Theorem. We will show that
\begin{equation}
\label{A-conv}
A_k^{(n)}(\pmb{t}_k,\pmb{t_k})\to 0 \quad \mbox{as $n\to \infty$},
\end{equation}
and there exists a function $h_{k}(\pmb{t_k})$ such that
\begin{equation}
\label{domin}
A_k^{(n)}(\pmb{t}_k,\pmb{t_k}) \leq h_{k}(\pmb{t_k}) \quad \mbox{and} \int_{T_k(t)}h_{k}(\pmb{t_k}) d\pmb{t_k}<\infty.
\end{equation}

We first prove \eqref{A-conv}. By the change of variables $\eta_j=\xi_1+\ldots+\xi_j$ for $j=1,\ldots,n$,
\begin{align*}
A_k^{(n)}(\pmb{t}_k,\pmb{t_k})&=\int_{\bR^k}\prod_{j=1}^{k}
e^{-(t_{j+1}-t_j)|\xi_1+\ldots+\xi_j|^2}
\left|c_{H_n}^{k/2}\prod_{j=1}^k|\xi_j|^{\frac{1}{2}-H_n}-
c_{H^*}^{k/2} \prod_{j=1}^k|\xi_j|^{\frac{1}{2}-H^*}\right|^2
  d\pmb{\xi_k}\\
&=\int_{\bR^k}\prod_{j=1}^{k}
e^{-(t_{j+1}-t_j)|\eta_j|^2}
\left|c_{H_n}^{k/2}\prod_{j=1}^k|\eta_j-\eta_{j-1}|^{\frac{1}{2}-H_n}-
c_{H^*}^{k/2} \prod_{j=1}^k|\eta_j-\eta_{j-1}|^{\frac{1}{2}-H^*}\right|^2
  d\pmb{\eta_k},
\end{align*}
where $\eta_0=0$. We denote by $B^{(n)}(\pmb{t_k},\pmb{\eta_k})$ the integrand in the last integral above. Clearly, $B^{(n)}(\pmb{t_k},\pmb{\eta_k}) \to 0$ as $n\to \infty$. So relation \eqref{A-conv} will follow by the Dominated Convergence Theorem, as long as we show that there exists a function $B(\pmb{t_k},\pmb{\eta_k})$ such that
\[
B^{(n)}(\pmb{t_k},\pmb{\eta_k})\leq B(\pmb{t_k},\pmb{\eta_k}) \quad \mbox{and} \quad \int_{\bR^k}B(\pmb{t_k},\pmb{\eta_k})d
\pmb{\eta_k}<\infty.
\]
Note that $B^{(n)}(\pmb{t_k},\pmb{\eta_k})\leq 2\big(B_1^{(n)}(\pmb{t_k},\pmb{\eta_k})+B_2^{(n)}(\pmb{t_k},
\pmb{\eta_k})\big)$, where
\[
B_1^{(n)}(\pmb{t_k},\pmb{\eta_k})=\prod_{j=1}^{k}
e^{-(t_{j+1}-t_j)|\eta_j|^2}
c_{H_n}^{k}\prod_{j=1}^k|\eta_j-\eta_{j-1}|^{1-2H_n},
\]
and $B_2^{(n)}$ has a similar expression with $H_n$ replaced by $H^*$. It suffices to consider $B_1^{(n)}$.

Fix numbers $a$ and $b$ such that
\begin{equation}
\label{def-ab}
\ell<a<H^*<b<\frac{1}{2}.
\end{equation}
Then $H_n \in [a,b]$ for $n$ large enough.
Since the constant $c_H$ defines a continuous function of $H$, $c_{H_n}$ is bounded by a constant $c$.
We use the inequality
\begin{equation}
\label{prod-ineq}
\prod_{j=1}^{k}|\eta_j-\eta_{j-1}|^{1-2H} \leq \sum_{\pmb{a} \in A_k} \prod_{j=1}^{k}|\eta_j|^{(1-2H)a_j},
\end{equation}
where $A_k$ is a set of multi-indices $\pmb{a}=(a_1,\ldots,a_k)$ such that
$a_1 \in \{1,2\}$, $a_n \in \{0,1\}$, $a_j \in \{0,1,2\}$ for $j=1,\ldots,k-1$, $\sum_{j=1}^{k}a_j=k$, and ${\rm card}(A_k)=2^{k-1}$. It follows that:
\[
B_1^{(n)}(\pmb{t_k},\pmb{\eta_k}) \leq c^{k} \prod_{j=1}^{k}
e^{-(t_{j+1}-t_j)|\eta_j|^2}
\sum_{\pmb{a} \in A_n}\prod_{j=1}^{k}|\eta_j|^{(1-2H_n)a_j}.
\]
Moreover,
$|\eta_j|^{(1-2H)a_j} \leq f_{a}(|\eta_j|)$ for any $j=1,\ldots,n$, where $a$ is the constant from \eqref{def-ab} and the functions $f_0,f_1,f_2$ are defined as follows: $f_0(r)=1$ for any $r> 0$,
\[
f_1(r)=
\left\{
\begin{array}{ll}
r^{1-2a}  & \mbox{if $r\geq 1$} \\
1 & \mbox{if $r\in (0,1)$}
\end{array} \right.
\quad
\mbox{and}
\quad
f_2(r)=
\left\{
\begin{array}{ll}
r^{2(1-2a)}  & \mbox{if $r\geq 1$} \\
1 & \mbox{if $r\in (0,1)$}
\end{array} \right.
\]
Hence,
\[
B_1^{(n)}(\pmb{t_k},\pmb{\eta_k}) \leq c^{k} \prod_{j=1}^{k}
e^{-(t_{j+1}-t_j)|\eta_j|^2}
\sum_{\pmb{a} \in A_n}\prod_{j=1}^{k}f_{a_j}(|\eta_j|)=:F(\pmb{t_k},\pmb{\eta_k}).
\]
By the same argument, $B_2^{(n)}(\pmb{t_k},\pmb{\eta_k})\leq F(\pmb{t_k},\pmb{\eta_k})$. Hence, $B^{(n)}(\pmb{t_k},\pmb{\eta_k})\leq 2F(\pmb{t_k},\pmb{\eta_k})$ and so,
\begin{equation}
\label{bound-A}
A_{k}^{(n)}(\pmb{t_k},\pmb{t_k})=\int_{\bR^k} B^{(n)}(\pmb{t_k},\pmb{\eta_k})d\pmb{\eta_k} \leq
2 \int_{\bR^k} F(\pmb{t_k},\pmb{\eta_k})d\pmb{\eta_k}.
\end{equation}
It remains to prove that $\int_{\bR^k} F(\pmb{t_k},\pmb{\eta_k})d\pmb{\eta_k}<\infty$. We give below an estimate for this integral which will be used for the proof of relation \eqref{domin} below. Note that
\[
\int_{\bR^k} F(\pmb{t_k},\pmb{\eta_k})d\pmb{\eta_k}=\sum_{\pmb{a} \in A_k} \prod_{j=1}^k I(a_j) \quad \mbox{with} \quad
I(a_j)=\int_{\bR}e^{-(t_{j+1}-t_j)|\eta_j|^2}f_{a_j}(|\eta_j|)d\eta_j.
\]
We treat separately the cases $a_j=0,1,2$. Using Lemma \ref{rough-int}, we have:
\begin{align*}
I(0)&=\int_{\bR}e^{-(t_{j+1}-t_j)|\eta_j|^2}d\eta_j=\sqrt{\pi}
(t_{j+1}-t_j)^{-1/2}\\
I(1)&=\int_{|\eta_j|\leq 1}e^{-(t_{j+1}-t_j)|\eta_j|^2}d\eta_j+\int_{|\eta_j|>1}
e^{-(t_{j+1}-t_j)|\eta_j|^2}|\eta_j|^{1-2a}d\eta_j\\
&\leq \sqrt{\pi}(t_{j+1}-t_j)^{-1/2}+\Gamma(1-a)(t_{j+1}-t_j)^{-(1-a)}\\
I(2)&=\int_{|\eta_j|\leq 1}e^{-(t_{j+1}-t_j)|\eta_j|^2}d\eta_j+\int_{|\eta_j|>1}
e^{-(t_{j+1}-t_j)|\eta_j|^2}|\eta_j|^{2(1-2a)}d\eta_j\\
&\leq \sqrt{\pi}(t_{j+1}-t_j)^{-1/2}+\Gamma\left(\frac{3-4a}{2}\right)
(t_{j+1}-t_j)^{-\frac{3-4a}{2}}.
\end{align*}
Therefore, there exists a constant $c_a>0$ depending on $a$ such that
\begin{align*}
I(a_j) & \leq c_a\left\{ (t_{j+1}-t_j)^{-1/2}+
(t_{j+1}-t_j)^{-(1-a)}+(t_{j+1}-t_j)^{-\frac{3-4a}{2}}\right\}\\
&\leq
c_a(t^{1-2a}+t^{\frac{1}{2}-a}+1)(t_{j+1}-t_j)^{-\frac{3-4a}{2}}=:
c_{a,t}(t_{j+1}-t_j)^{-\frac{3-4a}{2}}.
\end{align*}
Since ${\rm card}(A_k)=2^{k-1}$, we obtain:
\begin{equation}
\label{est-F}
\int_{\bR^k} F(\pmb{t_k},\pmb{\eta_k})d\pmb{\eta_k}\leq
2^{k-1} c_{a,t}(t_{j+1}-t_j)^{-\frac{3-4a}{2}}.
\end{equation}
This proves the integrability of $F(\pmb{t_k},\cdot)$ and relation \eqref{A-conv} follows by the Dominated Convergence Theorem.

\medskip
Next, we prove \eqref{domin}. By \eqref{bound-A} and \eqref{est-F}, we have:
\begin{align*}
A_{k}^{(n)}(\pmb{t_k},\pmb{t_k})^{\frac{1}{2H_0}}\leq c_{a,t}^{\frac{k}{2H_0}}\prod_{j=1}^{k}
(t_{j+1}-t_j)^{-\frac{3-4a}{4H_0}}=:h_k(\pmb{t_k}).
\end{align*}
The function $h_k$ is integrable on $T_k(t)$ since $\frac{3-4a}{4H_0}<1$, due to our choice of $a$ in \eqref{def-ab}. This finishes the proof of \eqref{domin} and the justification of the application of the Dominated Convergence Theorem.
\end{proof}

We now prove some moment estimates for the increments of the solution $u^H$, uniform in $H$, which will be used for the proof of tightness of the family $(u^H)_{H \in [a,b]}$ below. In particular, Theorem \ref{rough-unif-mom} shows that the process $u^H$ has a continuous modification. We will work with this modification.

\begin{theorem}
\label{rough-unif-mom}
Let $\ell$ be given by \eqref{def-ell}. Let $[a,b]$ be a compact set in $[0,1]$ such that:
\begin{equation}
\label{range-ab}
\ell<a<b<\frac{1}{2}.
\end{equation}

(a) Let $u^H$ be the solution of the heat equation \eqref{pam} with noise $W^H$. For any $p\geq 2$, $T>0$, $c_0 \in (0,\frac{2H_0+a-1}{2H_0})$ and
\begin{equation}
\label{cond-delta}
0<\delta<2H_0(1-c_0)+a-1,
\end{equation}
there exist a constant $C>0$ such that for any $t,t'\in [0,T]$ and $x,x'\in \bR$,
\[
\sup_{H \in [a,b]}\bE|u(t,x)-u(t',x')|^p \leq C \Big(|t-t'|^{p\delta/2}+
|x-x'|^{p\delta}\Big).
\]

(b) Let $u^H$ be the solution of the wave equation \eqref{ham} with noise $W^H$. For any $p\geq 2$, $T>0$,
there exist a constant $C>0$ such that for any $\delta \in (0,a)$ and $t,t'\in [0,T]$,
\[
\sup_{H \in [a,b]}\bE|u(t,x)-u(t',x)|^p \leq C |t-t'|^{p\delta}.
\]
For any $p\geq 2$, $T>0$ and $c_1 \in (0,2a)$,
there exist a constant $C>0$ such that for any $\delta \in (\frac{c_1}{2},a)$ and $x,x'\in \bR$,
\[
\sup_{H \in [a,b]}\bE|u(t,x)-u(t,x')|^p \leq C
|x-x'|^{p\delta}.
\]
\end{theorem}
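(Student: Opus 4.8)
The plan is to follow the same hypercontractivity-plus-chaos-expansion scheme used in the proof of Theorem \ref{Th-uniform-C}, with the spatial Riesz integrals replaced by the rough-noise integrals of Lemma \ref{rough-int}, while tracking carefully both the constants $c_H$ and the $H$-dependent exponents $\frac{1}{2}-H$. First I would write, for any two space-time points, $\|u^H(t,x)-u^H(t',x')\|_p \le \sum_{n\ge 1}(p-1)^{n/2}\|I_n^H(f_{t,x,n}-f_{t',x',n})\|_2$, so that it suffices to bound each $n!\,\|\widetilde f_{t,x,n}-\widetilde f_{t',x',n}\|_{\cH_H^{\otimes n}}^2$ and then to sum the resulting series uniformly in $H\in[a,b]$. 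As in Theorem \ref{Th-uniform-C}, the time increment splits into an interior term supported on $[0,t]^n$ (where the kernel difference is $\cF(G_{u_n+h}-G_{u_n})$) and a boundary term where one time variable lies in $[t,t+h]$; the space increment produces the factor $|1-e^{-i(\xi_1+\ldots+\xi_n)\cdot z}|^2$, which I would bound by a power $C|z|^{2\delta}|\xi_1+\ldots+\xi_n|^{2\delta}$ and absorb into the spatial spectral integral.

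For the wave equation (part (b)) I would proceed exactly as in Case 1 of Lemma \ref{rough-conv-Ik}: using the Cauchy-Schwarz bound \eqref{A-ineq}, the fractional temporal kernel $\prod_j|t_j-s_j|^{2H_0-2}$ is dominated by the white-noise-in-time structure, leaving the constant $\Gamma_{0,t}=2H_0 t^{2H_0-1}$ and a diagonal spatial integral. The latter is evaluated by the $\sin^2$ formula of Lemma \ref{rough-int} with exponent $\alpha=1-2H\in(0,1)$, and the remaining time-simplex integral by Lemma \ref{beta-lem}. The key point is that the constraint $\ell=1/4$ guarantees that even in the worst case of the product inequality \eqref{prod-ineq} (where a factor $|\xi_j|^{2(1-2H)}$ appears) one has $2(1-2H)<1$, so that the $\sin^2$ integral stays inside the admissible window of Lemma \ref{rough-int} for all $H\in[a,b]$; this is precisely why the wave case needs no Littlewood-Hardy step. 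Tracking the powers of $h$ and of $|z|$ then yields the Hölder exponents, and the admissible ranges $\delta\in(0,a)$ (time) and $\delta\in(\frac{c_1}{2},a)$ (space) emerge from keeping the shifted exponent $1-2H\pm 2\delta$ inside $(-1,1)$ and the simplex exponents above $-1$ uniformly in $H\in[a,b]$, the lower cutoff $\frac{c_1}{2}$ reflecting the extraction of the oscillatory increment factor.

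For the heat equation (part (a)) the diagonalization above loses integrability for small $H$, since the diagonal Gaussian integral of Lemma \ref{rough-int} produces a power $u_j^{-(3-4H)/2}$ whose exponent drops below $-1$ once $H<1/4$. I would therefore combine Cauchy-Schwarz in the two time arguments with the Littlewood-Hardy inequality \eqref{LH-ineq}, exactly as in Case 2 of Lemma \ref{rough-conv-Ik}. This replaces the double time integral by $b_{H_0}^n$ times the $2H_0$-th power of an $L^{1/H_0}(T_n(t))$ norm of the diagonal term, effectively borrowing integrability from the temporal kernel; the resulting simplex exponent is of the form $-\frac{3-4a}{4H_0}$, which is $>-1$ precisely when $a>\ell=\frac34-H_0$. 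The auxiliary parameter $c_0\in(0,\frac{2H_0+a-1}{2H_0})$ is the interpolation exponent used to extract the power of $h$ in the time increment, and the constraint \eqref{cond-delta}, $0<\delta<2H_0(1-c_0)+a-1$, is exactly what keeps this simplex integral convergent after the extra $2\delta$ from the increment factors has been inserted; the split into the exponents $\delta/2$ (time) and $\delta$ (space) mirrors the parabolic scaling already seen in Theorem \ref{Th-uniform-C}.

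Finally, in all cases the sum over the chaos order $n$ converges because Lemma \ref{beta-lem} contributes a factor $\Gamma(\cdot)^n/\Gamma(n(\cdot)+1)$ to each term, whose factorial-type denominator dominates the geometric growth of $(p-1)^{n/2}$, $\Gamma_{0,t}^n$, $b_{H_0}^n$ and $c_H^n$. Uniformity in $H\in[a,b]$ follows by bounding $c_H$ by its maximum over the compact interval (it is continuous in $H$) and by replacing the $H$-dependent Gamma factors by their extremal values on $[a,b]$, using the monotonicity of $\Gamma$ exactly as in the regular case. The step requiring the most care, and the main obstacle, is the bookkeeping of the exponent $1-2H$ through the Littlewood-Hardy inequality in the heat case: one must verify that the single a priori choice of $a$ (together with the derived $c_0$ and $\delta$) keeps every spatial integral finite and every simplex integral convergent simultaneously for all $H\in[a,b]$, which is exactly what forces the precise form of the constraints on $c_0$ and $\delta$ in the statement.
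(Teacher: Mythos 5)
Your proposal follows essentially the same route as the paper's proof: hypercontractivity plus the chaos expansion, the splitting of the time increment into the $A_n^H$ and $B_n^H$ terms, the Littlewood--Hardy inequality \eqref{LH-ineq} for the heat equation versus Cauchy--Schwarz for the wave equation, the change of variables $\eta_j=\xi_1+\ldots+\xi_j$ combined with \eqref{prod-ineq}, Lemma \ref{rough-int} for the spectral integrals and Lemma \ref{beta-lem} for the simplex integrals, with uniform control of all constants over $H\in[a,b]$. The only slight imprecision is in your motivation of the auxiliary parameters: in the paper, $c_0$ and $c_1$ are introduced not as interpolation exponents or exponent-admissibility cutoffs but to keep the Gamma-function constants $\Gamma\left(1-\frac{1+\alpha_n+4\e}{4H_0}\right)$ and $\Gamma(\alpha_n+2\delta)$ bounded away from their pole at $0$ uniformly in $H$ and $\delta$, which does not affect the validity of your argument.
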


\begin{proof}
We examine separately the time increments and the space increments.
We denote by $c$ a constant that may depend on $p,T,H_0,a,b$ and $c_0$ and may be different in each of its appearances.

{\bf Step 1 (time increments).} Let $t\in [0,T]$ and $h>0$ be such that $t+h \in [0,T]$.
As in the proof of Theorem \ref{Th-uniform-C},
\begin{align*}
\big\|u^{H}(t+h,x)-u^{H}(t,x)\big\|_p & \leq
\sum_{n\geq 1}(p-1)^{n/2}\left(\frac{2}{n!} \Big(A_n^{H}(t,h)+B_n^{H}(t,h)\Big) \right)^{1/2},
\end{align*}
where
$A_n^{H}(t,h)=(n!)^2\| \widetilde{f}_{t+h,x,n}1_{[0,t]^n}-\widetilde{f}_{t,x,n}
\|_{\cH_{H}^{\otimes n}}^2$ and $B_n^{H}(t,h)=(n!)^2\| \widetilde{f}_{t+h,x,n}1_{[0,t+h]^n -[0,t]^n}\|_{\cH_{H}^{\otimes n}}^2$.

We study separately the two terms, for the heat equation and for the wave equation.

\medskip

{\bf Study of $A_n^{H}(t,h)$ (heat equation).} By the Littlewood-Hardy inequality \eqref{LH-ineq},
\begin{equation}
\label{LH-A}
A_n^{H}(t,h) \leq  b_{H_0}^n \left(\int_{[0,t]^n} \psi_{t,h,n}^{H}(\pmb{t_n})^{\frac{1}{2H_0}}d \pmb{t_n}\right)^{2H_0},
\end{equation}
where the function $\psi_{t,h,n}^{H}(\pmb{t_n})$ is defined as follows: if $\pmb{t_n} \in [0,t]^n$ and $\rho$ is the permutation of $1,\ldots,n$ such that $t_{\rho(1)}<\ldots<t_{\rho(n)}$, we let $u_j=t_{\rho(j+1)}-t_{\rho(j)}$ (with $t_{\rho(n+1)}=t$), and
\begin{align*}
\psi_{t,h,n}^{H}(\pmb{t_n}) & : =
c_{H}^n \int_{\bR^n} \prod_{j=1}^{n-1}\big|\cF G_{u_j}(\xi_1+\ldots+\xi_j)\big|^2 \big|\cF (G_{u_n+h}-G_{u_n})(\sum_{k=1}^n\xi_k)\big|^2 \prod_{j=1}^{n}
|\xi_j|^{1-2H}d\pmb{\xi_n}.
\end{align*}
Using the change of variables $\eta_j=\xi_1+\ldots+\xi_j$ for $j=1,\ldots,n$ (with $\eta_0=0$) followed by the inequality \eqref{prod-ineq}, we see that
\begin{align}
\nonumber
& \psi_{t,h,n}^{H}(\pmb{t_n})  = c_H^n
\int_{\bR^n} \prod_{j=1}^{n-1}\big|\cF G_{u_j}(\eta_j)\big|^2 \big|\cF (G_{u_n+h}-G_{u_n})(\eta_n)\big|^2 \prod_{j=1}^{n}
|\eta_j-\eta_{j-1}|^{1-2H}d\pmb{\eta_n}\\
\label{psi-bound}
& \quad \leq c_{H}^n \sum_{\pmb{\alpha_n}\in D_n^{(H)}} \prod_{j=1}^{n-1}\left(\int_{\bR}|\cF G_{u_j}(\eta_j)|^2 |\eta_j|^{\alpha_j} d\eta_j\right) \left( \int_{\bR}
\big|\cF (G_{u_n+h}-G_{u_n})(\eta_n)\big|^2 |\eta_n|^{\alpha_n} d\eta_n
\right)
\end{align}
where $D_n^{(H)}$ is the set of multi-indices $\pmb{\alpha_n}=(\alpha_1,\ldots,\alpha_n)$ with $\alpha_j=(1-2H)a_j$ and $\pmb{a_n}=(a_1,\ldots,a_n)\in A_n$. We use Lemma \ref{rough-int} to compute the $d\eta_j$ integrals with $j=1,\ldots,n-1$. (We use the notation $G$ here since we use this relation for both heat and wave equations.)

For the $d\eta_n$ integral, we use the inequality
$1-e^{-x}\leq x^{\e}$ for any $x>0$ and $\e \in [0,1]$, and obtain
\begin{align*}
& \int_{\bR}
\big|\cF (G_{u_n+h}^h-G_{u_n}^h)(\eta_n)\big|^2 |\eta_n|^{\alpha_n}d\eta_n=
\int_{\bR}e^{-u_n|\eta_n|^2}\left(1-e^{-h|\eta_n|^2/2}\right)^2
 |\eta_n|^{\alpha_n} d\eta_n\\
& \quad \quad \quad \leq h^{2\e}\int_{\bR}e^{-u_n|\eta_n|^2} |\eta_n|^{\alpha_n+4\e}d\eta_n=
h^{2\e} \Gamma\left(\frac{1+\alpha_n+4\e}{2}\right)
 u_n^{-\frac{1+\alpha_n+4\e}{2}}.
\end{align*}
(We used the notation $G^h$ here to emphasize that this calculation is valid only for the heat equation.) Therefore,
\begin{align*}
\psi_{t,h,n}^{H}(\pmb{t_n})  &\leq h^{2\e} c_H^n \sum_{\pmb{\alpha_n}\in D_n^{(H)}}
\prod_{j=1}^{n-1}\Gamma\left( \frac{1+\alpha_j}{2}\right)
u_j^{-\frac{1+\alpha_j}{2}}
\Gamma\left(\frac{1+\alpha_n+4\e}{2}\right)
u_n^{-\frac{1+\alpha_n+4\e}{2}} \\
& \leq h^{2\e} c_H^n C_{H,1}^{n-1} \sum_{\pmb{\alpha_n}\in D_n^{(H)}}\prod_{j=1}^{n-1}u_j^{-\frac{1+\alpha_j}{2}} u_n^{-\frac{1+\alpha_n+4\e}{2}}
\end{align*}
using the fact that $\Gamma\left(\frac{1+\alpha_n+4\e}{2}\right) \leq \Gamma(3)=2$ and
$\prod_{j=1}^{n-1}\Gamma\left( \frac{1+\alpha_j}{2}\right)\leq C_{H,1}^{n-1}$, where
\begin{equation}
\label{def-CH1}
C_{H,1}=\max\left\{\Gamma\left(\frac{1}{2}\right),\Gamma(1-H),\Gamma\left(\frac{3-4H}{2} \right) \right\}.
\end{equation}
Both $c_H$ and $C_{H,1}$ can be uniformly bounded for all $H \in [a,b]$. From here, we derive that
\begin{align}
\label{psi-b}
\int_{[0,t]^n}\left(\psi_{t,h,n}^{H}(\pmb{t_n})\right)^{\frac{1}{2H_0}} d\pmb{t_n} \leq c^{n-1}h^{\frac{\e}{H_0}} n! \sum_{\pmb{\alpha_n} \in D_n^{(H)}}\int_{T_n(t)}\prod_{j=1}^{n-1}
(t_{j+1}-t_j)^{-\frac{1+\alpha_j}{4H_0}}
(t-t_n)^{-\frac{1+\alpha_n+2\delta}{4H_0}}d\pmb{t_n}.
\end{align}
To compute the integral
\[
{\cal I}(\pmb{\alpha_n}):=\int_{T_n(t)}\prod_{j=1}^{n-1}
(t_{j+1}-t_j)^{-\frac{1+\alpha_j}{4H_0}}
(t-t_n)^{-\frac{1+\alpha_n+2\delta}{4H_0}}d\pmb{t_n}
\]
we use Lemma \ref{beta-lem} with $\beta_j=-\frac{1+\alpha_j}{4H_0}$ for $j=1,\ldots,n-1$ and $\beta_n=-\frac{1+\alpha_n+2\e}{4H_0}$. To apply this lemma, we need to check that $\beta_j>-1$ for all $j=1,\ldots,n$. When $\alpha_j =2(1-2H)$, we use the condition $4H_0+4H>3$. When $\alpha_n=1-2H$ and $H \in [a,b]$ is arbitrary, we encounter the condition
\begin{equation}
\label{cond-eps}
\e<\frac{2H_0+a-1}{2}.
\end{equation}
Under this condition, we can apply Lemma \ref{beta-lem} to deduce that:
\begin{align*}
{\cal I}(\pmb{\alpha_n}) = \frac{\prod_{j=1}^{n-1}\Gamma(1-\frac{1+\alpha_j}{4H_0})
\Gamma(1-\frac{1+\alpha_n+4\e}{4H_0})}
{\Gamma(\frac{2H_0+H-1}{2H_0}n-\frac{\e}{H_0}+1)}
t^{\frac{2H_0+H-1}{2H_0}n-\frac{\e}{H_0}}.
\end{align*}

To bound these factors, we use some monotonicity properties of the Gamma function: $\Gamma$ is decreasing on $(0,x_0)$ and increasing on $(x_0,\infty)$, where $x_0 \approx 1.4$. Therefore, for any $\alpha_j \in \{0,1-2H,2(1-2H)\}$ and $H \in [a,b]$,
we have $\prod_{j=1}^{n-1}\Gamma(1-\frac{1+\alpha_j}{4H_0}) \leq \Gamma(1-\frac{3-4a}{4H_0})^{n-1}$.
Next, we observe that $1-\frac{1+\alpha_n+4\e}{4H_0}$ takes values in the interval $[1-\frac{1-a+2\e}{2H_0},1-\frac{1}{4H_0}]$ whose lower bound may be close to 0. Since $\lim_{x\to 0}\Gamma(x)=\infty$, we control this term by fixing an arbitrary value $c_0\in (0,\frac{2H_0+a-1}{2H_0})$, and then
choosing
\begin{equation}
\label{choice-of-e}
\e=\delta/2 \quad \mbox{for some} \quad 0<\delta<2H_0(1-c_0)+a-1.
\end{equation}
 With this choice of $\e$, \eqref{cond-eps} holds, and
more importantly $1-\frac{1-a+2\e}{2H_0}>c_0$, so that for any $\alpha_n\in \{0,1-2H\}$ and $H \in [a,b]$,
\begin{equation}
\label{G-c0}
\Gamma\left(1-\frac{1+\alpha_n+4\e}{4H_0}\right)\leq \Gamma(c_0).
\end{equation}
To bound the $\Gamma$-value appearing in the denominator, we pick an integer $m_0$ such that $(m_0-1)\frac{2H_0+a-1}{2H_0}>x_0$. Then, for any $n\geq m_0$ and for any $H \in [a,b]$,
\[
\Gamma\left(\frac{2H_0+H-1}{2H_0}n-\frac{\e}{H_0}+1\right)>
\Gamma\left(\frac{(n-1)(2H_0+a-1)}{2H_0}+1\right)>c^{n-1}
[(n-1)!]^{\frac{2H_0+a-1}{2H_0}}.
\] Hence, for any $\pmb{\alpha_n} \in D_{n}^{(H)}$ and for any $H \in [a,b]$,
\begin{align*}
{\cal I}(\pmb{\alpha_n}) \leq \frac{c^{n-1}}{[(n-1)!]^{\frac{2H_0+a-1}{2H_0}}}(t\vee 1)^{\frac{n(2H_0+b-1)}{2H_0}}.
\end{align*}
Returning to \eqref{psi-b}, we have:
\[
\int_{[0,t]^n}\Big(\psi_{t,h,n}^{H}(\pmb{t_n})\Big)^{\frac{1}{2H_0}}
d\pmb{t_n} \leq h^{\frac{\e}{H_0}} \frac{c^{n-1}}{[(n-1)!]^{\frac{a-1}{2H_0}}}(t\vee 1)^{\frac{n(2H_0+b-1)}{2H_0}}
\]

Finally, coming back to \eqref{LH-A}, we obtain:
\[
A_n^H(t,h) \leq h^{2\e}   \frac{c^{n-1}}{[(n-1)!]^{a-1}}(t\vee 1)^{n(2H_0+b-1)}.
\]
Consequently, for any $p\geq 2$, $t \in [0,T]$, $H \in [a,b]$ and $\e$ as in \eqref{choice-of-e},
\begin{equation}
\label{bound-A-H}
\sum_{n\geq 1}(p-1)^{n/2}\left(\frac{1}{n!}A_n^H(t,h)\right)^{1/2} \leq C h^{\e}.
\end{equation}

\medskip

\medskip

{\bf Study of $A_n^{H}(t,h)$ (wave equation).} By the Cauchy-Schwarz inequality,
\begin{equation}
\label{CS-A}
A_n^{H}(t,h) \leq  \Gamma_{0,t}^{n} \int_{[0,t]^n} \psi_{t,h,n}^{H}(\pmb{t_n})d \pmb{t_n}
\end{equation}
with $\Gamma_{0,t}=2H_0 t^{2H_0-1}$ and the same function $\psi_{t,h,n}^{H}(\pmb{t_n})$ as above.

To estimate $\psi_{t,h,n}^{H}(\pmb{t_n})$, we use again \eqref{psi-bound}. The $d\eta_j$ integrals for $j=1,\ldots,n-1$ are computed using Lemma \ref{rough-int}. For the $d\eta_n$ integral, we consider separately the cases $\alpha_n=0$ and $\alpha_n=1-2H$.
If $\alpha_n=0$, by Plancherel theorem,
\begin{equation}
\label{w-b1}
\int_{\bR}
\big|\cF (G_{u_n+h}^w-G_{u_n}^w)(\eta_n)\big|^2 d\eta_n=
2\pi\int_{\bR}
\left( 1_{\{|x|<u_n+h\}}-1_{\{|x|<u_n\}}\right)^2 d\eta_n=4\pi h,
\end{equation}
and if $\alpha_n=1-2H$,
\begin{align*}
& \int_{\bR}
\big|\cF (G_{u_n+h}^w-G_{u_n}^w)(\eta_n)\big|^2 |\eta_n|^{1-2H}d\eta_n=\int_{\bR}
\frac{|\sin((u_n+h)|\eta_n|)-\sin(u_n|\eta_n|)|^2}{|\eta_n|^2}
|\eta_n|^{1-2H}d\eta_n.
\end{align*}
(We used the notation $G^w$ here to emphasize that this calculation is valid only for the wave equation.)
We write the last integral as $I_1+I_2$, where $I_1$ and $I_2$ are the integrals over the the regions
$|\eta_n|\leq 1$, respectively $|\eta_n|>1$. For $I_1$, we use that fact that
\[
|\sin((u_n+h)|\eta_n|)-\sin(u_n|\eta_n|)|^2=4 \sin^2\left(\frac{h|\eta_n|}{2}\right)\cos^2 \left(
\frac{(2u_n+h)|\eta_n|}{2}\right)\leq h^2 |\eta_n|^2
\]
and hence $I_1 \leq h^2 \int_{|\eta_n|\leq 1}|\eta_n|^{1-2H}d\eta_n =\frac{1}{1-H}h^2$. For $I_2$, writing $\sin(x)=\frac{e^{ix}-e^{-ix}}{2i}$ we have
\begin{align*}
|\sin((u_n+h)|\eta_n|)-\sin(u_n|\eta_n|)|^2 & =|e^{iu_n |\eta_n|}(e^{ih|\eta_n|}-1)-e^{-iu_n |\eta_n|}(e^{-ih|\eta_n|}-1)|^2\\
& \leq 2 \left(|(e^{ih|\eta_n|}-1|^2+|e^{-ih|\eta_n|}-1|^2 \right) \leq 4 h^{2\delta} |\eta_n|^{2\delta},
\end{align*}
for any $\delta \in [0,1]$, using the fact that $|1-e^{ix}|^2 \leq |x|^{2\delta}$ for any $x>0$ and $\delta \in [0,1]$.
Hence, $I_2 \leq h^{2} \int_{|\eta_n|>1}|\eta_n|^{2\delta-2H-1}d\eta_n=\frac{1}{2H-2\delta}h^{2\delta}$,
for any $\delta \in (0,H)$. To ensure that this condition holds for all $H \in [a,b]$, we choose $\delta \in(0,a)$. We obtain that for any $H \in [a,b]$
\begin{equation}
\label{w-b2}
\int_{\bR}
\big|\cF (G_{u_n+h}^w-G_{u_n}^w)(\eta_n)\big|^2 |\eta_n|^{1-2H}d\eta_n \leq \left(\frac{1}{1-b}+\frac{1}{2a-2\delta}\right)h^{2\delta}.
\end{equation}
From \eqref{w-b1} and \eqref{w-b2}, we obtain that for any $\alpha_n \in \{0,1-2H\}$, $H\in [a,b]$ and $\delta \in (0,a)$,
\begin{align*}
& \int_{\bR}
\big|\cF (G_{u_n+h}^w-G_{u_n}^w)(\eta_n)\big|^2 |\eta_n|^{\alpha_n}d\eta_n \leq c h^{2\delta}.
\end{align*}
Returning to \eqref{psi-bound}, we obtain:
\[
\psi_{t,h,n}^{H}(\pmb{t_n})  \leq c h^{2\delta} c_H^n \sum_{\pmb{\alpha_n}\in D_n^{(H)}}
\prod_{j=1}^{n-1}2^{1-\alpha_j}\widetilde{C}_{\alpha_j}
u_j^{1-\alpha_j}.
\]
Note that $\prod_{j=1}^{k-1}2^{1-\alpha_j}\widetilde{C}_{\alpha_j} \leq C_{H,1}^{k-1}$, where
\begin{equation}
\label{def-CH1-w}
C_{H,1}=\max\left\{\pi, \frac{\Gamma(1-2H)}{H}, \frac{2\Gamma(2-4H)}{4H-1}\right\}.
\end{equation}
Clearly, $c_H$ and $C_{H,1}$ can be bounded for all $H \in [a,b]$. Hence, recalling \eqref{CS-A}, we have:
\begin{align*}
A_n^H(t,h) \leq \int_{[0,t]^n}\psi_{t,h,n}^{H}(\pmb{t_n})d\pmb{t_n} & \leq h^{2\delta}c^{n-1} n!\sum_{\pmb{\alpha_n}\in D_n^{(H)}}\int_{T_n(t)}\prod_{j=1}^{n}
(t_{j+1}-t_j)^{1-\alpha_j}d\pmb{t_n} \\
&= h^{2\delta}c^{n-1} n!\sum_{\pmb{\alpha_n}\in D_n^{(H)}}
\frac{\prod_{j=1}^{n-1}\Gamma(2-\alpha_j)
}{\Gamma(n(2H+1)+\alpha_n)}\,t^{n(2H+1)+\alpha_n-1} \\
& \leq h^{2\delta} c^{n-1} \frac{(t\vee 1)^{n(2b+1)-2a}}{(n!)^{2a}},
\end{align*}
using the fact that $\prod_{j=1}^{n-1}\Gamma(2-\alpha_j) \leq 1$ and
$\Gamma(n(2H+1)+\alpha_n) \geq \Gamma(n(2a+1))\geq c^n (n!)^{2a+1}$.

Consequently, for any $p\geq 2$, $t \in [0,T]$, $H \in [a,b]$ and $\delta \in (0,a)$,
\[
\sum_{n\geq 1}(p-1)^{n/2}\left(\frac{1}{n!}A_n^H(t,h)\right)^{1/2} \leq C h^{\delta}.
\]

{\bf Study of $B_n^H(t,h)$ (heat equation).} By the Littlewood-Hardy inequality \eqref{LH-ineq},
\begin{equation}
\label{B-bd}
B_n^H(t,h) \leq b_{H_0}^n \left(\int_{[0,t+h]^n} \gamma_{t,h,n}^{(H)}(\pmb{t_n})^{\frac{1}{2H_0}}1_{D_{t,h}}
(\pmb{t_n})d\pmb{t_n},
\right)^{2H_0}
\end{equation}
where $D_{t,h}=[0,t+h]^n \verb2\2 [0,t]^n$, and the function $\gamma_{t,h,n}^{(H)}(\pmb{t_n})$ is defined as follows: if $\pmb{t}_n \in [0,t]^n$ and $\rho$ is the permutation of $1,\ldots,n$ such that $t_{\rho(1)}<\ldots<t_{\rho(n)}$ and $t<t_{\rho(n)}<t+h$, we let
$u_j=t_{\rho(j+1)}-t_{\rho(j)}$ for all $j=1,\ldots,n-1$  $u_n=t-t_{\rho(n)}$, and
\begin{align*}
\gamma_{t,h,n}^{(H)}(\pmb{t_n})&:=c_H^n \int_{\bR^n}\prod_{j=1}^{n-1}|\cF G_{u_j}(\xi_1+\ldots+\xi_j)|^2
|\cF G_{u_n+h}(\xi_1+\ldots+\xi_n)|^2 \prod_{j=1}^{n}|\xi_j|^{1-2H}d\pmb{\xi_n}.
\end{align*}
Using the change of variables $\eta_j=\xi_1+\ldots+\xi_j$ for $j=1,\ldots,n$ (with $\eta_0=0$) followed by inequality \eqref{prod-ineq}, we obtain:
\begin{align}
\label{b-gamma}
\gamma_{t,h,n}^{(H)}(\pmb{t_n}) & \leq c_H^n \sum_{\pmb{\alpha_n}\in D_n^{(H)}} \prod_{j=1}^{n-1}\left( \int_{\bR}|\cF G_{u_j}(\eta_j)|^2 |\eta_j|^{\alpha_j}d\eta_j\right)\left( \int_{\bR}|\cF G_{u_n+h}(\eta_j)|^2 |\eta_n|^{\alpha_n}d\eta_n \right).
\end{align}
(We will the notation $G$ here since we use this relation for both heat and wave equation.)

Using Lemma \ref{rough-int}, we obtain:
\[
\gamma_{t,h,n}^{(H)}(\pmb{t_n}) \leq c_H^n \sum_{\pmb{\alpha_n}\in D_n^{(H)}} \prod_{j=1}^{n}\Gamma\left(\frac{1+\alpha_j}{2}\right)u_j^{-\frac{1+\alpha_j}{2}}
(u_n+h)^{-\frac{1+\alpha_n}{2}}.
\]
As noted above, $\prod_{j=1}^{n}\Gamma\left(\frac{1+\alpha_j}{2}\right) \leq C_{H,1}^n$ where $C_{H,1}$ is given by \eqref{def-CH1} and is bounded for all $H \in [a,b]$. Taking power $\frac{1}{2H_0}$, we obtain:
\[
\gamma_{t,h,n}^{(H)}(\pmb{t_n}) \leq c^n \sum_{\pmb{\alpha_n}\in D_n^{(H)}} \prod_{j=1}^{n}
u_j^{-\frac{1+\alpha_j}{4H_0}}
(u_n+h)^{-\frac{1+\alpha_n}{4H_0}}.
\]
Integrating over the set $D_{t,h}$, we obtain:
\[
\int_{[0,t+h]^n}\gamma_{t,h,n}^{(H)}(\pmb{t_n})^{\frac{1}{2H_0}}
1_{D_{t,h}}(\pmb{t_n}) d\pmb{t_n} \leq c^n n!
\sum_{\pmb{\alpha_n}\in D_n^{(H)}} \int_t^{t+h}J^h(t_n)(t+h-t_n)^{-\frac{1+\alpha_n}{2}}dt_n,
\]
where
\[
J^h(t_n):=\int_{T_{n-1}(t_n)}\prod_{j=1}^{n-1}
(t_{j+1}-t_j)^{-\frac{1+\alpha_n}{4H_0}}d\pmb{t_{n-1}}\leq \frac{c^{n-1}}{[(n-1)!]^{\frac{4H_0+2a-2}{4H_0}}}.
\]
We conclude that
\[
\int_{[0,t+h]^n}\gamma_{t,h,n}^{(H)}(\pmb{t_n})^{\frac{1}{2H_0}}
1_{D_{t,h}}(\pmb{t_n}) d\pmb{t_n} \leq  \frac{c^{n-1}}{[(n-1)!]^{\frac{2a-2}{4H_0}}}\sum_{\pmb{\alpha_n}\in D_n^{(H)}}
\frac{1}{1-\frac{1+\alpha_n}{4H_0}}h^{1-\frac{1+\alpha_n}{4H_0}}.
\]
Note that $(1-\frac{1+\alpha_n}{4H_0})^{-1}$ is bounded by $\frac{2H_0}{2H_0+a-1}$ for any $\alpha_n \in \{0,1-2H\}$ and $H \in [a,b]$. Moreover, due to condition \eqref{cond-eps},
\[
\frac{\e}{H_0}<\frac{2H_0+a-1}{2H_0}\leq 1-\frac{1+\alpha_n}{4H_0}\leq 1-\frac{1}{4H_0}.
\]
Therefore,
\[
\int_{[0,t+h]^n}\gamma_{t,h,n}^{(H)}(\pmb{t_n})^{\frac{1}{2H_0}}
1_{D_{t,h}}(\pmb{t_n}) d\pmb{t_n} \leq h^{\frac{\e}{H_0}} \frac{c^{n-1}}{[(n-1)!]^{\frac{2a-2}{4H_0}}}.
\]
Returning to \eqref{B-bd}, we infer that
$B_n(t,h) \leq  h^{2\e} \frac{c^{n-1}}{[(n-1)!]^{a-1}}$,
and hence, for any $p\geq 2$, $t \in [0,T]$ and $\e$ satisfying \eqref{cond-eps} (and in particular, for any $\e$ chosen as in \eqref{choice-of-e}),
\[
\sum_{n\geq 1}(p-1)^{n/2}\left(\frac{1}{n!}B_n(t,h)  \right)^{1/2} \leq C h^{\e}.
\]

\medskip

{\bf Study of $B_n^H(t,h)$ (wave equation).} By the Cauchy-Schwarz equation,
\begin{equation}
\label{B-nth-wave}
B_{n}^H(t,h) \leq \Gamma_{0,t}^n \int_{[0,t+h]^n}\gamma_{t,h,n}^{(H)}(\pmb{t_n})
1_{D_{t,h}}(\pmb{t_n}) d\pmb{t_n},
\end{equation}
where the function $\gamma_{t,h,n}^{(H)}(\pmb{t_n})$
is defined as above. As mentioned above, to estimate $\gamma_{t,h,n}^{(H)}(\pmb{t_n})$ we use  \eqref{b-gamma}, which holds also for the wave equation. Using Lemma \ref{rough-int}, we obtain:
\[
\gamma_{t,h,n}^{(H)}(\pmb{t_n}) \leq c_H^n \sum_{\pmb{\alpha_n}
\in D_n^{(H)}} \prod_{j=1}^{n}2^{1-\alpha_j}\widetilde{C}_{\alpha_j}u^{1-\alpha_j} \prod_{j=1}^{n-1}u_j^{1-\alpha_j}(u_n+h)^{1-\alpha_n}.
\]
As noticed above, $\prod_{j=1}^{n}2^{1-\alpha_j}\widetilde{C}_{\alpha_j} \leq C_{H,1}^{n}$, where $C_{H,1}$ is given by \eqref{def-CH1}, and the constants $c_H$ and $C_{H,1}$ are uniformly bounded in $H \in [a,b]$. Hence,
\[
\int_{[0,t+h]^n}\gamma_{t,h,n}^{(H)}(\pmb{t_n})
1_{D_{t,h}}(\pmb{t_n}) d\pmb{t_n} \leq c^{n}n!
\sum_{\pmb{\alpha_n}
\in D_n^{(H)}} \int_t^{t+h} J^w(t_n)(t+h-t_n)^{1-\alpha_n}dt_n,
\]
where
\begin{equation}
\label{def-Jw}
J^w(t_n):=\int_{T_{n-1}(t_n)}\prod_{j=1}^{n-1}
(t_{j+1}-t_{j})^{1-\alpha_j}d\pmb{t_{n-1}}\leq \frac{c^{n-1}}{[(n-1)!]^{2a+1}}.
\end{equation}
We conclude that
\[
\int_{[0,t+h]^n}\gamma_{t,h,n}^{(H)}(\pmb{t_n})
1_{D_{t,h}}(\pmb{t_n}) d\pmb{t_n} \leq \frac{c^{n-1}}{[(n-1)!]^{2a}} \sum_{\pmb{\alpha_n}
\in D_n^{(H)}} \frac{1}{2-\alpha_n}h^{2-\alpha_n}.
\]
Note that $(2-\alpha_n)^{-1}$ is bounded by $2$ for any $\alpha_n \in \{0,1-2H\}$ and $H \in [a,b]$. Moreover, for any $h \in (0,T)$ and $\delta \in (0,a)$, $h^{2-\alpha_n} \leq c h^{2a+1}\leq ch^{2\delta+1}\leq c h^{2\delta}$.
Therefore,
\[
B_n^H(t,h)\leq \int_{[0,t+h]^n}\gamma_{t,h,n}^{(H)}(\pmb{t_n})
1_{D_{t,h}}(\pmb{t_n}) d\pmb{t_n} \leq h^{2\delta}\frac{c^{n-1}}{[(n-1)!]^{2a}}.
\]
Consequently, for any $p\geq 2$, $t \in[0,T]$, $H \in [a,b]$ and $\delta \in (0,a)$,
\[
\sum_{n\geq 1}(p-1)^{n/2}\left(\frac{1}{n!}B_n^H(t,h) \right)^{1/2} \leq c h^{\delta}.
\]

\medskip
{\bf Step 2. (space increments)} Note that for any $x,z \in \bR$,
\[
\|u(t,x+z)-u(t,x)\|_p \leq \sum_{n\geq 1}(p-1)^{n/2}\left( \frac{1}{n!}C_{n}^H(t,z)\right)^{1/2},
\]
where $C_{n}^H(t,z)=(n!)^2 \|\widetilde{f}_n(\cdot,t,x+z)-
\widetilde{f}_n(\cdot,t,x)\|_{\cH_H^{\otimes n}}^{2}$.
We study separately $C_{n}^H(t,z)$, for the heat equation and for the wave equation.

\medskip

{\bf Study of $C_n^{H}(t,z)$ (heat equation).}
By the Littlewood-Hardy inequality \eqref{LH-ineq},
\begin{equation}
C_n^{H}(t,z) \leq b_{H_0}^2 \left( \int_{[0,t]^n} \psi_{t,z,n}^{H}(\pmb{t_n})^{\frac{1}{2H_0}} d\pmb{t_n})\right)^{2H_0},
\end{equation}
where the function $\psi_{t,z,n}^{H}(\pmb{t_n})$ is defined as follows: if $\pmb{t_n}=(t_1,\ldots,t_n) \in [0,t]^n$ and $\rho$ is the permutation such that $t_{\rho(1)}<\ldots<t_{\rho(n)}$, we let $u_j=t_{\rho(j+1)}-t_{\rho(j)}$ (with $t_{\rho(n+1)}=t$) and
\begin{align*}
\psi_{t,z,n}^{H}(\pmb{t_n})=c_H^n \int_{\bR^n}\prod_{j=1}^{n}|\cF G_{u_j}(\xi_1+\ldots+\xi_j)|^2 |1-e^{-i(\xi_1+\ldots+\xi_n)z}|^2 \prod_{j=1}^{n}|\xi_j|^{1-2H}d\pmb{\xi_n}.
\end{align*}
Using the inequality $|1-e^{ix}|^2\leq |x|^{2\delta}$ for any $\delta \in [0,1]$ and $x \in \bR$, followed by the change of variables $\eta_j=\xi_1+\ldots+\xi_j$ for $j=1,\ldots,n$ (with $\eta_0=0$) and inequality \eqref{prod-ineq}, we obtain:
\begin{align}
\label{est-psi}
\psi_{t,z,n}^{H}(\pmb{t_n}) &\leq |z|^{2\delta} c_H^n \sum_{\pmb{\alpha_n} \in D_n^{(H)}}\prod_{j=1}^{n-1} \left( \int_{\bR}|\cF G_{u_j}(\eta_j)|^2 |\eta_j|^{\alpha_j}d\eta_j \right)
 \left( \int_{\bR}|\cF G_{u_n}(\eta_n)|^2
 |\eta_n|^{\alpha_n+2\delta}d\eta_n \right).
\end{align}
(We will use \eqref{est-psi} for both heat and wave equations.) Using Lemma \ref{rough-int}, we obtain:
\begin{align*}
\psi_{t,z,n}^{H}(\pmb{t_n}) & \leq |z|^{2\delta} c_H^n \sum_{\pmb{\alpha_n} \in D_n^{(H)}}
 \prod_{j=1}^{n-1} \Gamma\left(\frac{1+\alpha_j}{2}\right)u_j^{-\frac{1+\alpha_j}{2}}
 \Gamma\left(\frac{1+\alpha_n+2\delta}{2}\right)
 u_n^{-\frac{1+\alpha_n+2\delta}{2}}.
\end{align*}
As noticed above, $\prod_{j=1}^{n-1} \Gamma\left(\frac{1+\alpha_j}{2}\right) \leq c^{n-1}$. Moreover, since $\frac{1+\alpha_n+2\delta}{2} \in (\frac{1}{2},H)$, we have $\Gamma\left(\frac{1+\alpha_n+2\delta}{2}\right) \leq \Gamma\left(\frac{1}{2}\right)$. Hence,
\begin{align*}
\psi_{t,z,n}^{H}(\pmb{t_n})\leq |z|^{2\delta} c^n \sum_{\pmb{\alpha_n} \in D_n^{H}} \prod_{j=1}^{n-1}(t_{\rho(j+1)}-t_{\rho(j)})^{-\frac{1+\alpha_j}{2}}
(t-t_{\rho(n)})^{-\frac{1+\alpha_n+2\delta}{2}}.
\end{align*}
Taking power $\frac{1}{2H_0}$ and integrating over $[0,t]^{n}$, we obtain:
\begin{align*}
\int_{[0,t]^n}\psi_{t,z,n}^{H}(\pmb{t_n})^{\frac{1}{2H_0}}d\pmb{t_n} & \leq |z|^{\frac{\delta}{H_0}}n! \sum_{\pmb{\alpha_n} \in D_n^{H}}\int_{T_n(t)} \prod_{j=1}^{n}(t_{j+1}-t_j)^{-\frac{1+\alpha_j}{4H_0}}
(t-t_n)^{-\frac{1+\alpha_n+2\delta}{4H_0}}.
\end{align*}
Note that the sum appearing in the previous estimate is the same as in \eqref{psi-b} in which we replace $\e$ by $\delta/2$ in the exponent of $t-t_n$.
Exactly as \eqref{bound-A-H}, we deduce that
\[
\sum_{n\geq 1}(p-1)^{n/2}\left(\frac{1}{n!}C_{n}^{H}(t,z) \right)^{1/2}\leq C |z|^{\delta}.
\]

\medskip

{\bf Study of $C_n^{H}(t,z)$ (wave equation).} By the Cauchy-Schwarz inequality,
\[
C_{n}^{H}(t,z) \leq \Gamma_{0,t}^n \int_{[0,t]^n}
\psi_{t,z,n}^{H}(\pmb{t_n}) d\pmb{t_n},
\]
with $\Gamma_{0,t}=2H_0 t^{2H_0-1}$ and the same function $\psi_{t,z,n}^{H}(\pmb{t_n})$ as above. To estimate $\psi_{t,z,n}^{H}(\pmb{t_n})$, we use \eqref{est-psi} and Lemma \ref{rough-int}. We obtain:
\begin{align*}
\psi_{t,z,n}^{H}(\pmb{t_n}) \leq |z|^{2\delta}c_H^n \sum_{\pmb{\alpha_n}\in D_n^{(H)}}\prod_{j=1}^{n-1}\left( 2^{1-\alpha_j}\widetilde{C}_{\alpha_j}u_j^{1-\alpha_j} \right)\cdot \left(
2^{1-\alpha_n-2\delta}\widetilde{C}_{\alpha_n+2\delta}u_n^{1-\alpha_n-2\delta}. \right)
\end{align*}
For the application of Lemma \ref{rough-int}, we need $\alpha_n+2\delta<1$, which introduces the restriction $0<\delta<H$ when $\alpha_n=1-2H$. To ensure that this holds for all $H \in [a,b]$, we take $0<\delta<a$. As noticed above,
$\prod_{j=1}^{n-1}2^{1-\alpha_j}\widetilde{C}_{\alpha_j} \leq c^{n-1}$. Moreover, since $\alpha_n+2\delta \in (0,1)$,
\[
\widetilde{C}_{\alpha_n+2\delta}=\frac{1}{1-\alpha_n-2\delta}
\Gamma(\alpha_n+2\delta)\sin \frac{\pi(\alpha_n+2\delta)}{2}.
\]
We study the range of values of $\alpha_n+2\delta$. If $\alpha_n=1-2H$, then $\alpha_n+2\delta \geq 1-2H\geq 1-2b$ and $\Gamma(\alpha_n+2\delta) \leq \Gamma(1-2b)$. The problem is when $\alpha_n=0$ since in this case $\alpha_n+2\delta=2\delta$ may be close to $0$. To avoid this problem, we fix an arbitrary value $c_1>0$, and we choose $\delta \in (\frac{c_1}{2},a)$. Then $\Gamma(\alpha_n+2\delta)\leq \max\{\Gamma(1-2b),\Gamma(c_1)\}$ and $\frac{1}{1-\alpha_n-2\delta}\leq \frac{1}{2a-c_1}$. Hence,
\[
\psi_{t,z,n}^{H}(\pmb{t_n}) \leq |z|^{2\delta}c^n \sum_{\pmb{\alpha_n}\in D_n^{(H)}}\prod_{j=1}^{n-1}(t_{\rho(j+1)}-t_{\rho(j)})^{1-\alpha_j}
(t-t_{\rho(n)})^{1-\alpha_n-2\delta}.
\]
Integrating over $[0,t]^n$, we obtain:
\begin{align*}
\int_{[0,t]^n}\psi_{t,z,n}^{H}(\pmb{t_n}) d\pmb{t_n} & \leq
|z|^{2\delta}c^{n-1} n! \sum_{\pmb{\alpha_n}\in D_n^{(H)}}\int_{T_n(t)}\prod_{j=1}^{n-1}(t_{j+1}-t_j)^{1-\alpha_j}
(t-t_n)^{1-\alpha_n-2\delta}d\pmb{t_n}\\
&=|z|^{2\delta}c^{n-1} n! \sum_{\pmb{\alpha_n}\in D_n^{(H)}}\int_0^t J^w(t_n)(t-t_n)^{1-\alpha_n-2\delta} dt_n,
\end{align*}
where $J^w$ is defined and estimated in \eqref{def-Jw}. Using this estimate for $J^w$, we see that:
\[
\int_{[0,t]^n}\psi_{t,z,n}^{H}(\pmb{t_n}) d\pmb{t_n} \leq  |z|^{2\delta}c^{n-1} \frac{1}{[(n-1)!]^{2a}}.
\]
We conclude that
\[
\sum_{n\geq 1}(p-1)^{n/2}\left( \frac{1}{n!}C_n^H(t,z)\right)^{1/2}\leq c |z|^{\delta}.
\]

\end{proof}

\bigskip

{\bf Proof of Theorem \ref{main-th2}:}

{\em Step 1. (finite dimensional convergence)} In this step, we prove that:
\[
\big(u^{H_n}(t_1,x_1),\ldots,u^{H_n}(t_k,x_k)\big)
\stackrel{d}{\to}
\big(u^{H^*}(t_1,x_1),\ldots,u^{H^*}(t_k,x_k)\big),
\]
for any $(t_1,x_1),\ldots,
(t_k,x_k)\in [0,T] \times \bR$. It is enough to prove that for any $(t,x) \in [0,T] \times \bR$, $u^{H_n}(t,x) \to u^{H^*}(t,x)$ in $L^2(\Omega)$ as $n\to \infty$.

As in the proof of Theorem \ref{main-th1}, we approximate $u^{H}(t,x)$ by the partial sum:
\[
u_m^H(t,x)=1+\sum_{k=1}^m I_{n}^H(f_{t,x,k}).
\]
Then $u_m^H(t,x) \to u^H(t,x) \to 0$ in $L^2(\Omega)$ as $m\to \infty$. Moreover, by Lemma \ref{rough-conv-Ik}, for any $m\geq 1$ fixed,
$u_m^{H_n}(t,x) \to u_m^{H*}(t,x)$ in $L^2(\Omega)$ as $n\to \infty$. So, it remains to prove that $\sup_{n\geq 1}\bE |u_m^{H_n}(t,x)-u^H(t,x)|^2 \to 0$ as $m\to \infty$. We choose values $a$ and $b$ such that \eqref{def-ab} holds. Since $H_n \to H^*$, there exists $N \in \bN$ such that $a<H_n<b$ for all $n\geq N$. Therefore, it suffices to prove that:
\begin{equation}
\label{rough-unif-conv}
\sup_{H \in [a,b]}\bE |u_m^H(t,x)-u^H(t,x)|^2=\sup_{H \in [a,b]}\sum_{k\geq m+1}\bE|I_k^{H}(f_{t,x,k})|^2 \to 0 \quad \mbox{as $m\to \infty$}.
\end{equation}

To prove \eqref{rough-unif-conv}, we need to estimate $\bE|I_k^{H}(f_{t,x,k})|^2$. Although these estimates exist in the literature, we revisit them here since we are interested in a uniform bound in $H$.
First, note that by Littlewood-Hardy inequality \eqref{LH-ineq},
\begin{equation}
\label{bound-Ik}
\bE|I_k^{H}(f_{t,x,k})|^2 =k! \, \|\widetilde{f}_{t,x,k} \|_{\cH_{H}^{\otimes k}}^2 \leq k! \, b_{0}^k \left(\int_{[0,t]^k}
A_k^{H}(\pmb{t_k})^{\frac{1}{2H_0}}d\pmb{t_k}\right)^{2H_0},
\end{equation}
where $A_k^{H}(\pmb{t_k})=c_H^k \int_{\bR^k}|\cF \widetilde{f}_{t,x,k}(\pmb{t_k},\bullet)(\pmb{\xi_k})|^2 \prod_{j=1}^{k}|\xi_j|^{1-2H}d\pmb{\xi_k}$.

Next, we would like to find an upper bound for $A_k^{H}(\pmb{t_k})$.
The major difference compared with the regular case is that in the rough case, we cannot apply Lemma \ref{G-lemma}. Instead, we will use the common technique which consists of the change of variables $\eta_j=\xi_1+\ldots+\xi_j$ for $j=1,\ldots,k$ (with $\eta_0=0$), followed by inequality \eqref{prod-ineq} for bounding a product with a sum. In the application of \eqref{prod-ineq} below, we let $D_k^{(H)}$ be the set of multi-indices $\pmb{\alpha}=(\alpha_1,\ldots,\alpha_k)$ with $\alpha_j=(1-2H)a_j$ and $\pmb{a}=(a_1,\ldots,a_k) \in A_k$.
Hence, if $\pmb{t_k}=(t_1,\ldots,t_k) \in [0,t]^k$ and  $\rho$ is a permutation of $1,\ldots,k$ such that $t_{\rho(1)}<\ldots<t_{\rho(k)}$, with $t_{\rho(k+1)}=t$, then
\begin{align*}
A_k^{H}(\pmb{t_k}) &=\frac{c_H^k}{(k!)^2}\int_{\bR^k} \prod_{j=1}^{k} |\cF G_{t_{\rho(j+1)}-t_{\rho(j)}}(\xi_1+\ldots+\xi_j)|^2
\prod_{j=1}^{k}|\xi_j|^{1-2H}d\pmb{\xi_k}\\
&=\frac{c_H^k}{(k!)^2}\int_{\bR^k} \prod_{j=1}^{k}
|\cF G_{t_{\rho(j+1)}-t_{\rho(j)}}(\eta_j)|^2
\prod_{j=1}^{k}|\eta_j-\eta_{j-1}|^{1-2H}d\pmb{\xi_k}\\
&\leq \frac{c_H^k}{(k!)^2} \sum_{\pmb{\alpha}\in D_k^{(H)}} \prod_{j=1}^{k}\left(\int_{\bR}
|\cF G_{t_{\rho(j+1)}-t_{\rho(j)}}(\eta_j)|^2 |\eta_j|^{\alpha_j} d\eta_j\right).
\end{align*}

The integrals above are evaluated using Lemma \ref{rough-int}.  We obtain:
\begin{align*}
A_k^{H}(\pmb{t_k}) &\leq \frac{c_H^k}{(k!)^2} \sum_{\pmb{\alpha}\in D_k^{(H)}} \prod_{j=1}^{k}
h(\alpha_j)(t_{\rho(j+1)}-t_{\rho(j)})^{\varphi(\alpha_j)},
\end{align*}
where
\[
h(\alpha_j)=
\left\{
\begin{array}{ll}
\Gamma((1+\alpha_j)/2)  & \mbox{for heat equation} \\
2^{1-\alpha_j} \widetilde{C}_{\alpha_j} & \mbox{for wave equation}
\end{array} \right.
\quad
\varphi(\alpha_j)=
\left\{
\begin{array}{ll}
-(1+\alpha_j)/2  & \mbox{for heat equation} \\
1-\alpha_j & \mbox{for wave equation}
\end{array} \right.
\]
Since $\alpha_j \in \{0,1-2H,2(1-2H)\}$, $h(\alpha_j)\leq C_{H,1}$, where the constant $C_{H,1}$ is given by
\eqref{def-CH1} for the heat equation, respectively \eqref{def-CH1-w} for the wave equation.
Coming back to \eqref{bound-Ik}, we obtain that:
\begin{align*}
\bE|I_k^H(f_{t,x,k})|^2 & \leq k!\frac{c_H^k}{(k!)^2}b_{H_0}^k C_{H,1}^k
\left[k! \int_{T_k(t)} \left(\sum_{\pmb{\alpha} \in D_k}
\prod_{j=1}^{k}(t_{j+1}-t_{j})^{\varphi(\alpha_j)} \right)^{\frac{1}{2H_0}}d\pmb{t_k}
\right]^{2H_0} \\
& \leq
(k!)^{2H_0-1} c_H^k b_{H_0}^k C_{H,1}^k
\left[ \sum_{\pmb{\alpha} \in D_k} \int_{T_k(t)} \prod_{j=1}^{k}(t_{j+1}-t_{j})^{\frac{\varphi(\alpha_j)}{2H_0}} d\pmb{t_k} \right]^{2H_0}.
\end{align*}
The last integral is calculated using Lemma \ref{beta-lem}: in the case of the heat equation, this integral is:
\[
\int_{T_k(t)} \prod_{j=1}^{k}(t_{j+1}-t_{j})^{-\frac{1+\alpha_j}{4H_0}}d\pmb{t_k} =\frac{\prod_{j=1}^{k}
\Gamma(-\frac{1+\alpha_j}{4H_0}+1)}{\Gamma(k\frac{2H_0+H-1}{2H_0}+1)}
t^{k\frac{2H_0+H-1}{2H_0}}\leq C_{H,2}^k \frac{t^{k\frac{2H_0+H-1}{2H_0}}}{\Gamma(k\frac{2H_0+H-1}{2H_0}+1)},
\]
and in the case of the wave equation, the integral is:
\[
\int_{T_k(t)} \prod_{j=1}^{k}(t_{j+1}-t_{j})^{\frac{1-\alpha_j}{2H_0}}d\pmb{t_k} =\frac{\prod_{j=1}^{k}
\Gamma(\frac{1-\alpha_j}{2H_0}+1)}{\Gamma(k\frac{H_0+H}{H_0}+1)}
t^{k\frac{H_0+H}{H_0}}\leq C_{H,2}^k \frac{t^{k\frac{H_0+H}{H_0}}}{\Gamma(k\frac{H_0+H}{H_0}+1)},
\]
where the constant $C_{H,2}$ is given by:
\[
C_{H,2}=
\left\{
\begin{array}{ll}
\max\{\Gamma(1-\frac{1}{4H_0}),\Gamma(1-\frac{1-H}{2H_0}),
\Gamma(1-\frac{3-4H}{4H_0}))\} & \mbox{for heat equation} \\
\max\{\Gamma(1+\frac{1}{2H_0}),\Gamma(1+\frac{H}{H_0}),
\Gamma(1+\frac{4H-1}{2H_0})) \} & \mbox{for wave equation}
\end{array} \right.
\]
It is not difficult to see that these expressions can be uniformly bounded for all $H \in [a,b]$. This concludes the proof of \eqref{rough-unif-conv}.

\medskip

{\em Step 2. (tightness)} The fact that $(u^{H_n})_{n\geq 1}$ is tight in $C([0,T] \times \bR)$ follows by Proposition 2.3 of \cite{yor} using Theorem \ref{rough-unif-mom} above.


\begin{thebibliography}{99}

\bibitem{bally-pardoux98} Bally, V. and Pardoux, E. (1998). Malliavin calculus for white noise driven parabolic SPDEs. {\em Potential Anal.} {\bf 9}, 27–64.

\bibitem{B12-POTA} Balan, R. M. (2012). The stochastic wave equation with multiplicative fractional noise: a Malliavin calculus approach. {\em Potential Anal.} {\bf 36}, 1-34.

\bibitem{BC14} Balan, R. M. and Conus, D. (2015). A note on intermittency for the fractional heat equation. {\em Stat. Probab. Letters} {\bf 14}, 6-14.

\bibitem{BJQ} Balan, R.M., Jolis, M. and Quer-Sardanyons, L. (2015). SPDEs with affine multiplicative fractional noise in space with index H in (1/4,1/2) (with Maria Jolis and Lluis Quer-Sardanyons). {\em Electr. J. Probab.} {\bf 20}, no. 54, 1-36.

\bibitem{BQS} Balan, R. M., Quer-Sardanyons, L. and Song, J. (2019). H\"older continuity for the parabolic Anderson model with space-time homogeneous Gaussian noise. {\em Acta Math. Scien. Ser. B} (English edition), {\bf 39}, 717-730.

\bibitem{BS17} Balan, R. M. and Song, J. (2017). Hyperbolic Anderson model with space-time homogeneous Gaussian noise. {\em ALEA, Latin Amer. J. Probab. Math. Stat.} {\bf 14}, 799-849.

\bibitem{bezdek16} Bezdek, P. (2016). On weak convergence of stochastic heat equation with colored noise. {\em Stoch. Proc. Appl.} {\bf 126}, 2860–2875.


\bibitem{CDST} Chen, X., Deya, A., Song, J. and Tindel, S. (2021). Solving the hyperbolic Anderson model 1: Skorohod setting. Preprint arXiv: 2112.04954.

\bibitem{CD08} Conus, D. and Dalang, R. C. (2008). The non-linear stochastic wave equation in high dimensions. {\em Electr. J. Probab.} {\bf 13}, paper no. 22, 629-670.

\bibitem{dalang99} Dalang, R. C. (1999). Extending martingale
measure stochastic integral with applications to spatially
homogenous s.p.d.e.'s. {\em Electr. J. Probab.} {\bf 4}, no. 6,
1-29. 

\bibitem{dalang-sanz09} Dalang, R.C. and Sanz-Sol\'e, M. (2009). H\"older-Sobolev regularity of the solution to the stochastic wave equation in dimension three. {\em Memoirs AMS} {\bf 931}.

\bibitem{FK09} Foondun, M. and Khoshnevisan, D. (2009). Intermittency and nonlinear parabolic stochastic partial differential equations. {\em Electr. J. Probab.} {\bf 14}, no. 21, 548-568.

\bibitem{GJQ1} Giordano, L., Jolis, M. and Quer-Sardanyons, L. (2020). SPDEs with fractional noise in space: continuity in law with respect to the Hurst index. {\em Bernoulli} {\bf 26}, 352-386.

\bibitem{GJQ} Giordano, L., Jolis, M. and Quer-Sardanyons, L. (2020). SPDEs with linear multiplicative fractional noise: continuity in law with respect to the Hurst index. {\em Stoch. Proc. Their Appl.} {\bf 130}, 7396-7430.
    

\bibitem{hu-le19} Hu, Y. and L\^e, K. (2019). Joint H\"older continuity of parabolic Anderson model. {\em Acta Math. Sci.} {\bf 39B}, 764-780.

\bibitem{HHLNT} Hu, Y., Huang, J., L\^e, K., Nualart, D. and Tindel, S. (2016). Parabolic Anderson model with rough dependence in space. In ``Computation and Combinatorics in Dynamics, Stochastics and Control''. Abel Symposia Vol. 13, Springer, Cham.

\bibitem{HHLNT1}  Hu, Y., Huang, J., L\^e, K., Nualart, D. and Tindel, S. (2017). Stochastic heat equation with rough dependence in space. {\em Ann. Probab.} {\bf 45}, 4561-4616.


\bibitem{hu-nualart09} Hu, Y. and Nualart, D. (2009). Stochastic heat equation driven by fractional noise and local time. {\em Probab. Th. Related Fields} {\bf 143}, 285-328.
    


\bibitem{HNS11} Hu, Y., Nualart, D. and Song, J. (2011). Feynman-Kac formula for heat equation driven by fractional white noise. {\em Ann. Probab.} {\bf 39}, 291-326.
    
\bibitem{HNV} Huang, J., Nualart, D. and Viitasaari, L. (2020). A central limit theorem for the stochastic heat equation. {\em Stochastic Processes and Their Applications} 130, 7170-7184.


        
    

\bibitem{JV1} Jolis, M. and Viles, N. (2007). Continuity with respect to the Hurst parameter of the local time of the fractional Brownian motion. {\em J. Theoret. Probab.} {\bf 20}, 133-152.

\bibitem{JV2} Jolis, M. and Viles, N. (2007). Continuity with respect to the Hurst parameter of the laws of the multiple fractional integrals. {\em Stoch. Proc. Their Appl.} {\bf 117}, 1189-1207.

\bibitem{JV3} Jolis, M. and Viles, N. (2010). Continuity in the Hurst parameter of the law of the symmetric integral with respect to the fractional Brownian motion. {\em Stoch. Proc. Their Appl.} {\bf 120}, 1651-1679.

\bibitem{JV4} Jolis, M. and Viles, N. (2010).  Continuity in the Hurst parameter of the law of the of the Wiener integral with respect to the fractional Brownian motion. {\em Stat. Probab. Letters} {\bf 80}, 566-572.

\bibitem{nualart06} Nualart D. (2006). {\em The Malliavin Calculus and Related Topics}. Second edition.
Springer-Verlag, Berlin. 

\bibitem{song-song-xu20} Song, J., Song, X. and Xu, F. (2020).
Fractional stochasic wave equation driven by a Gaussian noise rough in space. {\em Bernoulli} {\bf 26}, 2699-2726.

\bibitem{yor} Yor, M. (1983). Le drap brownien comme limite en loi des temps locaux lin\'eaires. In: ''Seminaire des probabilit\'es XVII''. {\em Lect. Notes Math.} {\bf 986}, Springer, 89-105.
    
\bibitem{walsh86} Walsh, J. B. (1986).
An introduction to stochastic partial differential equations. In: ``\'Ecole d'\'et\'e de probabilit\'es de Saint-Flour, XIV--1984''. {\em Lecture Notes Math.}, {\bf 1180}, 265-439. Springer, Berlin.
    

\end{thebibliography}
\end{document}